\numberwithin{equation}{section}
\newcommand{\ds}{\displaystyle}
\newcommand{\ol}{\overline}
\newcommand{\be}{\begin{equation}}
\newcommand{\ee}{\end{equation}}
\newcommand{\ba}{\begin{array}}
\newcommand{\ea}{\end{array}}
\newcommand{\bpm}{\begin{pmatrix}}
\newcommand{\epm}{\end{pmatrix}}
\newcommand{\Tr}{\textrm{Tr}}
\newcommand{\bd}{\begin{definition}}
\newcommand{\ed}{\end{definition}}
\newcommand{\np}{\nabla^+}
\newcommand{\nm}{\nabla^-}
\newcommand{\phip}{\phi^+}
\newcommand{\phim}{\phi^-}
\newcommand{\hf}{\hat{f}}
\newcommand{\Pip}{\Pi^+}
\newcommand{\Pim}{\Pi^-}
\newcommand{\wtV}{\widetilde{V}}
\newenvironment{proofsk}{\bigskip\noindent{\it Proof sketch.}\rm}{\hfill $\Box$}
\newtheorem{theorem}{Theorem}[section]
\newtheorem{proposition}[theorem]{Proposition}
\newtheorem{lemma}[theorem]{Lemma}
\newtheorem{cor}[theorem]{Corollary}
\theoremstyle{remark}
\newtheorem{remark}{Remark}
\theoremstyle{definition}
\newtheorem{definition}{Definition}[section]
\newtheorem{assumptions}{Assumptions}[section]
\newtheorem{dichotomy}{Dichotomy}[section]
\newtheorem{example}{Example}[section]
\let\cal=\mathcal
\begin{document}

\title{On the behavior at infinity of solutions \\ to difference equations in Schr\"odinger form}
\author{Evans M. Harrell II\\
School of Mathematics\\
Georgia Institute of Technology\\
Atlanta, GA 
30332-0610 USA\\
email: harrell@math.gatech.edu
\and
Manwah Lilian Wong\\
School of Mathematics\\
Georgia Institute of Technology\\
Atlanta, GA 
30332-0610 USA\\
email: wong@math.gatech.edu}

\maketitle
\begin{abstract}
We offer several perspectives on the behavior at infinity of solutions
of discrete Schr\"odinger equations.
First we study pairs of discrete
Schr\"odinger equations whose potential functions differ by a 
quantity that can be considered small 
in a suitable sense
as the index $n \to \infty$. 
With simple assumptions on the growth rate of the solutions of the original system, 
we show that the perturbed system has a fundamental set of solutions with the 
same behavior at infinity, employing a variation-of-constants 
scheme to produce a convergent 
iteration for the solutions of the second equation in terms of those of the original
one. We use the 
relations between the solution sets 
to derive exponential dichotomy 
of solutions and elucidate the structure of transfer matrices.

Later, we present a sharp discrete analogue 
of the Liouville-Green (WKB) transformation, making it possible to 
derive exponential behavior at infinity of a single difference equation,
by explicitly constructing a comparison equation 
to which our perturbation results apply.  
In addition, we point out an exact relationship 
connecting the diagonal part of the Green matrix to the asymptotic behavior of solutions.
With both of these tools it is possible to identify an Agmon metric, in terms of which, in some situations, any decreasing solution must decrease exponentially.

A discussion of the discrete Schr\"ordinger problem and its connection with orthogonal polynomials on the real line 
is presented in an Appendix.

\end{abstract}


\date{}

\maketitle

\section{Introduction}\label{intro}

In this article we address the asymptotic behavior of solutions to linear difference equations of Schr\"odinger type, as the index $n$ tends to infinity. We prove exponential dichotomy theorems and refined approximative expressions for the growing and subdominant (i.e., decaying) solutions, which have controlled errors.

We begin by approaching the subject as a perturbation analysis, showing that if two Schr\"odinger difference equations have potential terms that are sufficiently close, then they are asymptotically equivalent in the sense of \cite{Eas}, that is, there are solution bases for the two problems with the same behavior at infinity. 
The expressions obtained by the perturbation analysis
are not merely asymptotic, but convergent for large but 
finite indices $n$.
We follow with 
a classification of the possible asymptotic behaviors
and some more estimates, 
including some cases where the asymptotic behavior of solutions 
does not match that of the comparison equation but
can nonetheless be characterized.

Of course, when faced with one particular equation, 
comparison theorems are of limited use in the absence of a good
equation to which one can compare.  We therefore present some methods for 
constructing such equations after the perturbation analysis.  
Finally, we
present some examples and remarks about connections with 
orthogonal polynomials.

Let $\Delta$ denote the discrete second-difference operator on 
the positive integer lattice.  We standardize the Laplacian such that
$(\Delta f)_n := f_{n+1} + f_{n-1} - 2 f_n$ for $f = (f_n) \in \ell^2(\mathbb{N})$,
and consider pairs of equations of the form 
\begin{align}
(-\Delta+V)\psi & =  0 , \label{one*} \\ 
(-\Delta+V^0)\phi& =  0, \label{compar}
\end{align}
where the potential-energy 
functions $V$, $V^0$ are diagonal operators with 
real values $V_n$ and $V^0_n$ respectively. 
(Complex $V_n$ and $V^0_n$ 
could be allowed with, for the most part, only straightforward complications, but we prefer to keep the exposition focused.)

Our first aim is to  
find conditions under which the solutions of \eqref{one*} have the
same asymptotic behavior as $n \to \infty$ as those of the comparison \eqref{compar}
when the potential energies 
$V$ and $V_0$ are close in a suitable sense.
One application of the analysis is to the 
asymptotic behavior of eigenfunctions, in which case 
instead of \eqref{one*} {one could}
write
\begin{equation}\label{EVP}
(-\Delta +V - E)\psi=0
\end{equation}
and $(-\Delta +V ^0- E)\phi=0$
for some real eigenvalue $E$.
Again, for simplicity
we shall absorb $E$ into the definition of $V$, 
with no material restriction, because
we consider the full set of solutions to
\eqref{one*} without restricting to 
eigensolutions of a particular realization of 
$-\Delta+V$ as an operator.  Those interested in decay properties of  
eigenfunctions should systematically replace $V_n$ in this article
by $V_n - E$.

We do assume, however, that among
the solution set of the comparison
equation \eqref{compar} there is a distinguished solution
that decreases at infinity, unique up to a multiplicative constant, and
we follow the nomenclature of ordinary differential equations in
referring to such solutions as {\it subdominant}.
(The term {\it recessive} is also frequently used.)  We recall at this stage
that if $V$ has a constant value $V_\infty \notin [-4,0]$, then
explicit solutions are easily found, and it emerges that
$(-\Delta - V_\infty)\phi=0$ has a subdominant solution, indeed, 
one that decreases exponentially 
(see Example \ref{constantpotentialexample}). Conversely, if 
${V =} V_\infty \in [-4,0]$, then there are no subdominant
solutions.  The significance of the interval $[-4,0]$ is that it is the 
spectrum of $\Delta$.  

\begin{remark}\label{4 vs 0}
Equation
\eqref{one*} is invariant under the 
transformation
\begin{eqnarray}\label{sym}
\psi_n \to (-1)^n \psi_n\\
V_n \to - 4 - V_n,
\end{eqnarray}
as can be easily verified.
Because of this, any fact proved under the assumption, for example, that $V_n > 0$
has a counterpart for $V_n < -4$.  We shall use this remark to 
avoid repetition in some of our proofs.
\end{remark}

When \eqref{compar} has a 
subdominant solution, it will be denoted $\phi^-$ (fixing an overall constant), and ordinarily we shall
identify a second, independent solution as $\phi^+$.  We recall that the 
Wronskian of two solutions of a discrete Schr\"odinger equation,
\begin{equation}\label{Wdef} 
W[\phi_-, \phi_+] := \phi^-_n \phi^+_{n+1} - \phi^-_{n+1} \phi_n^+ ,
\end{equation}
is independent of the coordinate $n$,
analogously to 
a well-known fact for Sturm-Liouville equations.  (See, e.g., \cite{Aga}.) In terms of difference operators $\nabla^\pm$, 
\be
\np f_n :=   f_{n+1}- f_n  \text{ and } \nm f_n  :=  f_n - f_{n-1} \label{nabladef} .
\ee
the Wronskian can also be expressed as
\be
W  =  \phi^-_n (\nabla^{\pm} \phi_n^+) - \phi_n^+ (\nabla^{\pm} \phi^-_n ).
\label{wronskianpm}
\ee
For future reference we recall some simple relations for the difference operators:
\begin{enumerate}
\item $\np (\nm f_n) = \nm (\np f_n) = \Delta f_n$;
\item (Chain Rule) $\np (f g)_n = (\np f_n) g_n + f_n (\np g_n) + (\np f_n ) (\np g_n)$;
\item  (Chain Rule) $\nm (f g)_n = (\nm f_n) g_n + f_n (\nm g_n) - (\nm f_n ) (\nm g_n)$.
\end{enumerate}

In comparing \eqref{one*} and \eqref{compar} the behavior of solutions at infinity will be examined from several points of view. First, we
study the asymptotic behavior of solutions under the effect of small perturbations of the potential as a fixed-point problem.  We consider the solutions of  \eqref{compar} as known, and use them as the basis for a (convergent) 
variation-of-constants calculation of the solutions of \eqref{one*}.  
Then we introduce a factorization of the equation satisfied by the coefficients in that scheme in order to get a detailed understanding of how they 
converge.  

Thereafter we present a new and efficient discrete variant of the Liouville-Green (WKB) 
approximation \cite{Olv}, so that for a given potential $V$
a comparison equation \eqref{compar} can be found 
for which the asymptotics are explicitly known,
and consequently the behavior at infinity of solutions of \eqref{one*}
is explicitly determined, with controlled errors. 
As an alternative, following \cite{DaHa,ChSh00,ChSh01}, we explore a set 
of related exact relations based on the diagonal of the Green 
matrix and their consequences for the behavior of solutions at infinity.

Finally, the reader may refer to the Appendix for a discussion of the relation between orthogonal polynomials and second-order difference equations. There the connection between ratio asymptotics of orthogonal polynomials and the results of Geronimo--Smith \cite{GeSm} will be discussed, and it will be shown how
solutions of the discrete Schr\"odinger equation can be represented by orthogonal polynomials of the first and second kind.

We are far from the first to consider these questions, and like other researchers
we mimic the better-developed theory known for Sturm-Liouville problems.
Let us close the Introduction by placing our work in the context of the earlier literature.

A systematic study of certain difference equations dates from Poincar\'e \cite{Poi}.  In his work and in that of Birkhoff \cite{Bir} asymptotic analysis was considered for equations using what would nowadays be termed transfer matrices of special types.  
A rather satisfactory understanding of the effect of small perturbations on stability questions for equations using transfer matrices,
with dichotomy assumptions on their eigenvalues,
was developed in \cite{Per,Cof,BeLu}, some of which is recounted in
the monograph by Agarwal \cite{Aga},
which is a good source for showing how 
many of the standard facts from Sturm-Liouville theory 
can be ported over to the discrete setting,  
in particular, the technique of variation of constants.
Coffman \cite{Cof} and Benzaid--Lutz \cite{BeLu}
studied product solutions, and in that regard prefigure
in a rough way what we do in Section \ref{WKB}. 
The main results of \cite{BeLu} were discrete analogues of 
Levinson's fundamental lemma \cite{Lev} 
for the asymptotic expression of the solution of a perturbed linear differential equation.
In \cite{BeLu} the authors considered difference equations
of the form
\be
y(k+1)=[\Lambda(k) + R(k)] y(k) .
\label{ble}
\ee 
Here $\Lambda(k)$ is an $N \times N$ diagonal matrix with non-zero diagonal entries $(\lambda_j (k))_{j=1}^{N}$ 
that satisfy a certain 
dichotomy condition.
They further considered
\be
x(k+1) = [\Lambda_0 + V(k) + R(k)] x(k) ,
\ee 
again where $\Lambda_0$ is diagonal and $V$ and $R$ satisfy certain bounds.
Their results apply widely to perturbed difference equations, but not readily to \eqref{one*} and \eqref{compar}:  As we shall see in \eqref{e9},
the transfer matrices in the present article are of the form
\be
I + M_n =  I + \ds \frac{V_{n}-V^0_n}{W} \bpm \phi^+_n \phi^-_n & \phi^-_n \phi^-_n \\ - \phi^+_n \phi^+_n & - \phi^+_n \phi^-_n \epm ,
\ee
which are neither diagonal nor diagonable if $V_n - V^0_n \neq 0$.
In fact, $1$ is the only eigenvalue of the matrix $I+M_n$ and it has geometric multiplicity one.  Moreover, the term $\frac{V_n - V^0_n}{W} (\phip_n)^2$ in the lower left corner typically diverges as $n \to \infty$.

Trench \cite{Tr1,Tr2} succeeded in giving conditions for the asymptotic equivalence of the solution sets of \eqref{one*} and \eqref{compar} in the sense considered by Hartman and Wintner \cite{Hart}, and seems to have been the first to realize that a good criterion for equivalence relies on an analysis of the expression
\be\label{TrenchJ}
J_k := \phi_k^+ \phi_k^- \left(V_k - V_k^0\right),
\ee
(in our notation).  In \cite{ChSh08}, following Trench,
a necessary and sufficient condition for asymptotic equivalence for some difference equations related to \eqref{one*} and \eqref{compar} is spelled out in terms of $J$.
Although we bring different methods to bear on 
asymptotic equivalence in the following sections, 
$\ell^p$ norms of \eqref{TrenchJ} and similar quantities
remain central; see Theorems \ref{Banconv}, \ref{dichothm1}, and \ref{dichothm2}.
One could interpret these norms as traces of operator perturbations like those 
occurring in studies of spectral--shift functions 
(e.g., see \cite{GeSi}),
leading us to speculate that direct connections
between the spectral-shift functions and behavior at infinity could be found. 

After a discussion of asymptotic equivalence, we take advantage of the specific 
Schr\"odinger form of the equation, and
construct comparison equations having
product solutions of a certain structure, inspired by the classical
Liouville-Green, or WKB, approximation.
Of prior work
on discrete versions of the Liouville-Green approximation
we single out that of
Geronimo and Smith \cite{GeSm}, which was inspired by 
some earlier work of Braun \cite{Bra}.
Geronimo and Smith studied a somewhat more general equation than \eqref{one*},
\be
d_{n+1} y_{n+1} - q_n y_n + y_{n-1} = 0,
\label{jeff1}
\ee where $d_n$ and $q_n$ are sequences of numbers with $d_n \ne 0$ for $n=1,2, \dots$, and pursued a Riccati analysis for solutions in product form.  
In Section \ref{WKB} we
identify a more explicit and efficient product scheme
along the lines of the Liouville-Green approximation as presented in \cite{Olv},
to which we apply the perturbation analysis developed in Section \ref{VoC}.
Yet another 
article with Liouville-Green analysis using products is \cite{Che}, in which an explicit semiclassical parameter appears, and the Green matrix is studied in a product form and used to prove refined stability results for nonhomogeneous difference equations.
In the following subsection we relate 
the discrete Liouville-Green approximation to the
diagonal of the Green function, following
ideas pioneered in \cite{DaHa}, which have
previously been somewhat developed in the study of
difference equations by Chernyavskaya and Shuster \cite{ChSh00,ChSh01,Che}.

\section{Variation of constants and behavior at infinity}\label{VoC}

We begin by casting the problem of understanding the asymptotic dependence of solutions at infinity as a problem on a certain weighted Banach space, following 
ideas of \cite{HaSi} in the continuous case, which was in 
turn inspired by \cite{Hart}.  
Suppose that $V$ is close to another potential $V^0$ such that the solutions 
to $(-\Delta + V^0) \phi = 0$
are understood, in the sense that a pair of independent solutions
$\phi^\pm$ can be identified, 
including a subdominant solution $\phi^-_n\in\ell^2$. 
A perturbation analysis
can be based on the following way of connecting the
solutions of \eqref{one*} and \eqref{compar}.
\begin{theorem} Let $V$ and $V^0$ be two potential functions, and  
let $\phi^\pm$ be independent
solutions to the equation \eqref{compar}. We may represent any $\psi$ as a linear combination of  $\phi^\pm$ with variable 
coefficients $a_n^\pm$, i.e.,
\be \label{BasicRep}
\psi_n = a_n^+ \phi^+_n + a_n^- \phi^-_n.
\ee 
Then $\psi$ is a solution to the equation \eqref{one*} if and only if we may find sequences $(a^\pm_n)_{n=1}^\infty$ that satisfy the following two conditions:
For all $n \geq 1$,
\begin{eqnarray}
(\nm a^+_n) \phi^+_{n-1} + (\nm a^-_n) \phi^-_{n-1} =  0 ,
\label{e2c} \\
 \bpm a^+ _{n+1} \\ a^- _{n+1} \epm =
 \left[ I + \ds \frac{V_{n}-V^0_n}{W} \bpm \phi^+_n \phi^-_n & \phi^-_n \phi^-_n \\ - \phi^+_n \phi^+_n & - \phi^-_n \phi^+_n \epm \right] \bpm a^+_{n} \\ a^-_{n} \epm =: \left( I+M_n  \right) \bpm a^+_{n} \\ a^-_{n} \epm ,
\label{e9} 
\end{eqnarray} under the convention that $\phi^\pm_{0} = 0$.
\label{recurrencethm}
\end{theorem}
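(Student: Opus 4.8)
The plan is to treat the representation \eqref{BasicRep} together with the auxiliary condition \eqref{e2c} as the discrete analogue of the classical variation-of-constants substitution, and to recover \eqref{e9} by feeding this substitution into \eqref{one*}. As a preliminary I would rewrite both Schr\"odinger equations in three-term form, $\psi_{n+1}+\psi_{n-1}=(2+V_n)\psi_n$ and $\phi^\pm_{n+1}+\phi^\pm_{n-1}=(2+V^0_n)\phi^\pm_n$, which is the shape in which the substitution is cleanest, and I would note that independence of $\phi^\pm$ means $W\ne0$, so that any $2\times2$ matrix formed from the values of $\phi^+$ and $\phi^-$ at two consecutive indices is invertible, with determinant $\pm W$.

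The first real step is to recognize what \eqref{e2c} says: subtracting \eqref{BasicRep} written at index $n-1$ from the identity $a^+_n\phi^+_{n-1}+a^-_n\phi^-_{n-1}=\psi_{n-1}$ leaves exactly $(\nm a^+_n)\phi^+_{n-1}+(\nm a^-_n)\phi^-_{n-1}$, so \eqref{e2c} at index $n$ holds if and only if the level-$n$ coefficients also reproduce $\psi$ at level $n-1$, i.e.\ $a^+_n\phi^+_{n-1}+a^-_n\phi^-_{n-1}=\psi_{n-1}$ (for $n=1$ this is vacuous under the convention $\phi^\pm_0=0$). Granting this, I would substitute $\psi_{n+1}=a^+_{n+1}\phi^+_{n+1}+a^-_{n+1}\phi^-_{n+1}$, $\psi_n=a^+_n\phi^+_n+a^-_n\phi^-_n$ and $\psi_{n-1}=a^+_n\phi^+_{n-1}+a^-_n\phi^-_{n-1}$ into $(-\Delta+V)\psi$, writing $a^\pm_{n+1}=a^\pm_n+\delta^\pm_n$. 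Since $\phi^\pm$ solve the $V^0$-recurrence, all $a^\pm_n$-terms collapse to a multiple of $\psi_n$ and one is left with
\be
(-\Delta+V)\psi\big|_n \;=\; (V_n-V^0_n)\psi_n \;-\; \delta^+_n\phi^+_{n+1}-\delta^-_n\phi^-_{n+1}.
\ee
Applying the first step at index $n+1$ also gives $\delta^+_n\phi^+_n+\delta^-_n\phi^-_n=0$. Hence, once \eqref{e2c} is assumed, $(-\Delta+V)\psi|_n=0$ is equivalent to the $2\times2$ linear system $\delta^+_n\phi^+_n+\delta^-_n\phi^-_n=0$, $\delta^+_n\phi^+_{n+1}+\delta^-_n\phi^-_{n+1}=(V_n-V^0_n)\psi_n$, whose coefficient determinant is $-W\ne0$; solving by Cramer's rule, using $W=\phi^-_n\phi^+_{n+1}-\phi^+_n\phi^-_{n+1}$ and then re-substituting $\psi_n=a^+_n\phi^+_n+a^-_n\phi^-_n$, turns $a^\pm_{n+1}=a^\pm_n+\delta^\pm_n$ into precisely \eqref{e9}.

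It then remains to assemble the equivalence. For the ``if'' direction, given $(a^\pm_n)$ satisfying \eqref{e2c} and \eqref{e9} I define $\psi$ by \eqref{BasicRep} and run the display above forward: the first step turns \eqref{e2c} into the identity for $\psi_{n-1}$, \eqref{e9} puts $\delta_n$ in exactly the form forced by the $2\times2$ system, and the display then gives $(-\Delta+V)\psi|_n=0$ for every $n\ge1$. For the ``only if'' direction, given a solution $\psi$ I define $a^\pm_n$ for $n\ge2$ as the unique solution of $a^+_n\phi^+_n+a^-_n\phi^-_n=\psi_n$, $a^+_n\phi^+_{n-1}+a^-_n\phi^-_{n-1}=\psi_{n-1}$ (determinant $W\ne0$), using the convention $\phi^\pm_0=0$ to pin down $n=1$; then \eqref{BasicRep} holds by construction, \eqref{e2c} by the first step run in reverse, and \eqref{e9} by running the middle computation backward. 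I would also record the mild redundancy that \eqref{e9} by itself already forces \eqref{e2c}, since $\delta_n=M_na_n$ automatically satisfies $\delta^+_n\phi^+_n+\delta^-_n\phi^-_n=0$.

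I do not expect a genuine conceptual obstacle: the statement is a bookkeeping identity for a variation-of-constants scheme. The part that demands care is keeping the index shifts and the two sign conventions straight --- in particular that $\det\bpm\phi^+_n&\phi^-_n\\\phi^+_{n-1}&\phi^-_{n-1}\epm=+W$ while $\det\bpm\phi^+_n&\phi^-_n\\\phi^+_{n+1}&\phi^-_{n+1}\epm=-W$ --- and handling the edge index $n=1$, where the convention $\phi^\pm_0=0$ is silently encoding the boundary value of $\psi$ (in effect $\psi_0=0$, equivalently imposing the recurrence only from $n\ge2$ onward).
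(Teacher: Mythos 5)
Your proposal is correct and is essentially the paper's own argument: the auxiliary condition \eqref{e2c} is exactly the paper's constraint \eqref{psinexpand} (both say the level-$n$ coefficients also reproduce $\psi_{n-1}$), and your Cramer's-rule solution of the system $\delta^+_n\phi^+_n+\delta^-_n\phi^-_n=0$, $\delta^+_n\phi^+_{n+1}+\delta^-_n\phi^-_{n+1}=(V_n-V^0_n)\psi_n$ is the same Wronskian elimination the paper performs by multiplying by $\phi^\mp$ and subtracting to reach \eqref{e6}--\eqref{e7}, with the converse handled by the same reversal. Your organization via the three-term recurrence (rather than the $\nabla^\pm$ chain rules) and your explicit construction of $a^\pm_n$ in the ``only if'' direction are just cleaner bookkeeping of the identical computation, and your treatment of the $n=1$ convention matches the paper's level of care.
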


\begin{proof}[Proof of Theorem~{\rm\ref{recurrencethm}}] Suppose that 
$\psi$ is a solution to \eqref{one*}. Since the expression \eqref{BasicRep} has two degrees of freedom we have the liberty to impose a second condition on the coefficients to
so that
\be\label{psinexpand}
\nm \psi_n = a^+_n( \nm \phi^+_n) + a^-_n (\nm \phi^-_n).
\ee
Observe that \eqref{psinexpand} implicitly sets the following expression to zero
\be
 (\nm a^+_n) \phi^+_n  +(\nm a^-_n) \phi^-_n - (\nm a^+_n) (\nm \phi^+_n) - (\nm a^-_n) (\nm \phi^-_n) = 0;
\label{e2}
\ee
by direct expansion \eqref{e2} is equivalent to \eqref{e2c}.

Now compute $\Delta \psi_{n} = \np \nm \psi_{n}$ based on the expression \eqref{psinexpand}. By the chain rules for $\nabla^\pm$,
\begin{multline}
\Delta \psi_n = (\np a^+_n ) (\nm \phi^+_n ) + a^+_n (\Delta \phi^+_n) + (\np a^+_n) (\Delta \phi^+_n)
\\ + (\np a^-_n ) (\nm \phi^-_n ) + a^-_n (\Delta \phi^-_n)+ (\np a^-_n) (\Delta \phi^-_n).
\label{e3}
\end{multline}

\noindent
By substituting $\Delta \psi_n = V_n \psi_n$ and $\Delta \phi^{\pm}_n = V^0_n \phi^{\pm}_n$ into \eqref{e3}, we obtain
\be
(V_n-V^0_n) (a^+_n \phi^+_n + a^-_n \phi^-_n ) = (\nm \phi^+_n + V^0_n \phi^+_n) (\np a^+_n) + (\nm \phi^-_n + V^0_n \phi^-_n) (\np a^-_n) .
\label{e5}
\ee
In order for the coefficients of $\np a^-_n$ in \eqref{e5} and \eqref{e2c} to match, we multiply \eqref{e5} by $\phi^-_{n}$ and \eqref{e2c} by $(\nm \phi^-_n + V^0_n \phi^-_n)$. Then we subtract one from the other and get\be
(\np a^+_n ) W = (V_n - V^0_n) \left( \phi^+_n  a^+_n + {\phi^-_n}  a^-_n \right) \phi^-_n = (V_n - V^0_n) \psi_n \phim_n ,
\label{e6}
\ee where $W$ is the Wronskian as defined in \eqref{Wdef}.

Similarly, we match the coefficients of $\np a^-_n$ in \eqref{e5} and \eqref{e2c} by multiplying \eqref{e5} with $\phi^+_{n}$ and \eqref{e2c} with $(\nm \phi^+_n + V^0_n \phi^+_n)$. Then we obtain
\be
(\np a^-_n ) (-W) = (V_n-V^0_n) \left( \phi^+_n \phi^+_n a^+_n + \phi^-_n \phi^+_n a^-_n\right) = (V_n - V^0_n) \psi_n \phip_n.
\label{e7}
\ee 
Putting \eqref{e6} and \eqref{e7} together in matrix form, we arrive at \eqref{e9}.

For the implication in the other direction,
suppose that the sequences $a^\pm_n$ satisfy \eqref{e9} and \eqref{e2c}. By direct expansion of \eqref{BasicRep}, we find that
\be
\Delta \psi_n
 =  V^0_n \psi_n + \left(\text{right side of } \eqref{e5}\right) + \nabla^+ \left(\text{left side of } \eqref{e2c}\right).
\label{sumpsi}
\ee
With \eqref{e9}, \eqref{e6} and \eqref{e7} follow.
If we now apply these relations to the right side of \eqref{e5}, it becomes
\be
\ds \frac{V_n - V^0_n}{W}\left[ (\nm \phip_n + V^0_n \phip_n) \psi_n \phim_n - (\nabla^- \phim_n + V^0_n \phim_n) \psi_n \phip_n \right],
\ee
which simplifies to
\be
\psi_n \ds \frac{V_n - V^0_n}{W} [\phim_n \nm \phip_n - \phip_n \nm \phim_n] = (V_n - V^0_n) \psi_n .
\ee
Returning to \eqref{sumpsi}, we conclude that $\Delta \psi_n = V_n \psi_n$ for all $n$ only if 
$\np\left((\nm a^+_n) \phi^+_{n-1} + (\nm a^-_n) \phi^-_{n-1}\right) = 0$
for all $n$, or in other words, when the expression on the left side
of \eqref{e2c} is a constant. Finally, note that since its value is zero when $n=1$, it is zero for all $n$.
\end{proof}

\subsection{Convergence of $a^\pm_n$ in a suitable Banach space}

An advantage of the variation-of-constants approach to
asymptotic equivalence over the methods of \cite{Tr1,Tr1,ChSh08}
is that it provides a rapidly convergent iterative scheme with error estimates 
that can be made explicit.
To set it up,
we introduce the notation
\be\label{betandef}
{\bf a}_n =\begin{pmatrix} {a_n^+} \\ {a_n^-} \end{pmatrix}  \text{ and }
\quad  \beta_n =\frac{V_n -V^0_n} W ,
\ee  where $W$ is the Wronskian as in \eqref{Wdef}.  We shall regard $\bf a$ as an element of a weighted Banach space,
\be {\cal B}_N:=\left\{{\bf X}=\begin{pmatrix} X^+_n\\ X^-_n\end{pmatrix}: 
\|{\bf X}\|_N :=\sup_{n\ge N}
\left(|(\phi^+_n)^2 X^+_n| + |X^-_n|\right)<\infty\right\}.
\ee

\noindent
Substituting the expression for $\psi$ into \eqref{e9},
we calculate
\begin{equation}\label{two*}
(\nabla^-{\bf a}_n)=\beta_n
\begin{pmatrix}
\phi^+_n\phi^-_n & (\phi^-_n)^2 \\
-(\phi^+_n)^2 &  - \phi^+_n\phi^-_n \end{pmatrix} {\bf a}_n,
\end{equation}
or, by summing \eqref{two*},
\begin{align*}
{\bf a}_n&= {\bf a}_{n+1}-\beta_{n} \begin{pmatrix} 
\phi^+_{n}\phi^-_{n} & (\phi^-_{n})^2\\
-(\phi^+_{n})^2 & -\phi^+_{n}\phi^-_{n} \end{pmatrix} {\bf a}_{n}\\
&= \dots\\
&= {\bf a}_{n+\ell} - \sum^{n+\ell-1}_{k=n} \beta_{k} \begin{pmatrix} 
\phi^+_{k}\phi^-_{k} & (\phi^-_{k})^2\\
-(\phi^+_{k})^2 & -\phi^+_{k}\phi^-_{k} \end{pmatrix} {\bf a}_{k} .
\end{align*} 
Formally letting $\ell\to\infty$, $ a_{n+\ell}\to
\begin{pmatrix}0 \\ 1\end{pmatrix}$, we therefore 
define a linear operator ${\cal M}$ by
\be ({\cal M} {\bf X})_n:=\sum^\infty_{k=0}\beta_{n+k}
\begin{pmatrix} \phi^+_{n+k}\phi^-_{n+k} & (\phi^-_{n+k})^2\\
-(\phi^+_{n+k})^2 & - \phi^+_{n+k}\phi^-_{n+k}\end{pmatrix} {\bf X}_{n+k}.
\ee 

The convergence of the coefficients in the Banach space proceeds as follows:
\begin{theorem}\label{Banconv}
Suppose that \eqref{compar} has a solution basis
$\phi^\pm$ such that $\lim_{n \to \infty}\phi_n^- = 0$,
$|\phi_n^+|$ is monotonically nondecreasing for sufficiently large $n$, 
and $\beta_n$ (cf. \eqref{betandef}) satisfies
$\beta_n (1 + |\phi_n^+ \phi_n^-|^2) \in\ell^1$.
Then for $N$ sufficiently large,
${\cal M}$ is a contraction on
${\cal B}_N$.  Consequently, there exists a 
unique solution $\psi^-$
of \eqref{one*} such that
$$\psi_n^- = a_n^+ \phi_n^+ + a_n^- \phi_n^-,$$
where $\lim_{n \to \infty}{a_n^+} = 0$
and
$\lim_{n \to \infty}{a_n^-} = 1$.  
Moreover,
if we define $\widehat{\psi}_n^- := \max_{m \ge n}{|\phi_n^-|}$, then
\begin{equation}
\psi_n^- = \phi_n^- + r_n \widehat{\psi}_n^-,
\end{equation}
with 
$\lim_{n \to \infty}{r_n} = 0$.
\end{theorem}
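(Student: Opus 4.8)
The plan is to exhibit the fixed-point structure already prepared in the excerpt: a solution $\psi^-$ of \eqref{one*} with the asserted behavior corresponds, via \eqref{BasicRep} and Theorem~\ref{recurrencethm}, to a fixed point of the affine map ${\bf X}\mapsto \begin{pmatrix}0\\1\end{pmatrix}+({\cal M}{\bf X})$ on the Banach space ${\cal B}_N$. So the first step is to verify that ${\cal M}$ maps ${\cal B}_N$ into itself and is a contraction there for $N$ large. Writing out $({\cal M}{\bf X})_n$ componentwise and using the weight $|(\phi^+_n)^2 X^+_n|+|X^-_n|$, I would estimate the $+$-component by $\sum_{k\ge n}|\beta_k|\,|\phi^+_n|^2\bigl(|\phi^+_k\phi^-_k|\,|X^+_k|+|\phi^-_k|^2|X^-_k|\bigr)$ and the $-$-component by $\sum_{k\ge n}|\beta_k|\bigl(|\phi^+_k\phi^-_k|\,|X^+_k|+|\phi^-_k|^2|X^-_k|\bigr)$. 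Using $|X^+_k|\le \|{\bf X}\|_N/|\phi^+_k|^2$ and $|X^-_k|\le\|{\bf X}\|_N$, each summand is bounded by $\|{\bf X}\|_N|\beta_k|\bigl(|\phi^+_k\phi^-_k|+|\phi^-_k|^2\bigr)$, and for the $+$-component we pick up an extra factor $|\phi^+_n/\phi^+_k|^2\le 1$ for $k\ge n$ because $|\phi^+_n|$ is eventually nondecreasing — this is the crucial place where that monotonicity hypothesis is used. Hence $\|{\cal M}{\bf X}\|_N\le C_N\|{\bf X}\|_N$ with $C_N=\sup_{n\ge N}\sum_{k\ge n}|\beta_k|\bigl(|\phi^+_k\phi^-_k|+|\phi^-_k|^2\bigr)$, which is the tail of a convergent series by the hypothesis $\beta_n(1+|\phi^+_n\phi^-_n|^2)\in\ell^1$ combined with boundedness of $\phi^-_n$ (note $|\phi^-_n|^2\le\|\phi^-\|_\infty^2$ and $|\phi^+_n\phi^-_n|\le \sqrt{1+|\phi^+_n\phi^-_n|^2}$, or more directly $|\phi^+_n\phi^-_n|\le\frac12(1+|\phi^+_n\phi^-_n|^2)$). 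So $C_N<1$ for $N$ large, giving a unique fixed point ${\bf a}\in{\cal B}_N$ by the Banach fixed-point theorem.

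Next I would extract the stated asymptotics from membership in ${\cal B}_N$ together with the fixed-point equation. The relation ${\bf a}_n=\begin{pmatrix}0\\1\end{pmatrix}+({\cal M}{\bf a})_n$ and the bound just derived give $|(\phi^+_n)^2 a^+_n|+|a^-_n-1|\le C_n\|{\bf a}\|_N\to 0$ as $n\to\infty$, where $C_n$ is the series tail above; this immediately yields $a^+_n\to 0$ (in fact $(\phi^+_n)^2a^+_n\to 0$, which is stronger) and $a^-_n\to 1$. Since $\psi^-_n=a^+_n\phi^+_n+a^-_n\phi^-_n$ and, by Theorem~\ref{recurrencethm}, any ${\bf a}$ satisfying \eqref{e2c}--\eqref{e9} produces a genuine solution of \eqref{one*}, we must also check that the fixed point automatically satisfies the first constraint \eqref{e2c}. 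This follows because \eqref{two*} was derived precisely as the combination of \eqref{e9} with \eqref{e2c}, and the summed form that defines ${\cal M}$ encodes $\nabla^-{\bf a}_n$ matching the right-hand side of \eqref{two*}; then, as in the second half of the proof of Theorem~\ref{recurrencethm}, the quantity $(\nabla^-a^+_n)\phi^+_{n-1}+(\nabla^-a^-_n)\phi^-_{n-1}$ is forced to be constant, and I would pin that constant to zero by examining the behavior as $n\to\infty$ (using $a^+_n\phi^+_n\to 0$, which one gets from $(\phi^+_n)^2a^+_n$ bounded only if $|\phi^+_n|\to\infty$; if $\phi^+$ stays bounded the argument is even easier). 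Uniqueness of $\psi^-$ among solutions with $a^+_n\to 0,\ a^-_n\to 1$ is inherited from uniqueness of the fixed point, since any such solution's coefficients lie in ${\cal B}_N$ for large $N$.

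Finally, for the refined expression $\psi^-_n=\phi^-_n+r_n\widehat\psi^-_n$ with $r_n\to 0$, I would write $\psi^-_n-\phi^-_n=a^+_n\phi^+_n+(a^-_n-1)\phi^-_n$ and divide by $\widehat\psi^-_n:=\max_{m\ge n}|\phi^-_m|$ (note $\widehat\psi^-_n\ge|\phi^-_n|$ and $\widehat\psi^-_n\to 0$, and $\widehat\psi^-_n>0$ for all $n$ since $\phi^-$ is not identically zero near infinity). For the second term, $|(a^-_n-1)\phi^-_n|/\widehat\psi^-_n\le|a^-_n-1|\to 0$. For the first term one needs $|a^+_n\phi^+_n|/\widehat\psi^-_n\to 0$; here I would use the sharper decay $|a^+_n|\le C_n\|{\bf a}\|_N/(\phi^+_n)^2$ from the weighted-norm estimate, so $|a^+_n\phi^+_n|\le C_n\|{\bf a}\|_N/|\phi^+_n|$, and combine with the fact that the series tail $C_n$ goes to zero fast enough relative to $\widehat\psi^-_n$ — this last comparison is the one delicate point, and in the worst case one may need to split the sum defining $C_n$ to show $C_n/\widehat\psi^-_n\to 0$, or alternatively absorb a factor of $|\phi^-_n|$ back into the $\ell^1$ hypothesis. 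I expect this comparison between the convergence rate of the iteration and the decay rate of $\phi^-$ to be the main obstacle; everything else is the standard contraction-mapping bookkeeping set up in the preceding pages.
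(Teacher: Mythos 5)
Your first half — the weighted norm, the use of the eventual monotonicity of $|\phi^+_n|$ to absorb the factor $|\phi^+_n/\phi^+_{n+k}|^2\le 1$, the tail of an $\ell^1$ series as the contraction constant, and the Neumann/Banach fixed-point step — is exactly the paper's argument. Two bookkeeping slips should be fixed but are harmless: your displayed bound for the lower component uses the wrong matrix row (it should involve $(\phi^+_k)^2|X^+_k|+|\phi^+_k\phi^-_k|\,|X^-_k|$, hence a ``$1$'' rather than a $|\phi^-_k|^2$ after inserting $|X^+_k|\le\|{\bf X}\|_N/(\phi^+_k)^2$), and in the weighted upper component the term $(\phi^+_n)^2(\phi^-_k)^2|X^-_k|$ is controlled by $|\phi^+_k\phi^-_k|^2\|{\bf X}\|_N$, not $|\phi^-_k|^2\|{\bf X}\|_N$; with these corrections $C_N$ becomes (twice) $\sup_{n\ge N}\sum_{k\ge n}|\beta_k|\left(1+|\phi^+_k\phi^-_k|^2\right)$, precisely the quantity in the paper, and it still tends to $0$, so the contraction conclusion stands. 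Also, your detour through the constraint \eqref{e2c} is unnecessary: substituting the increments given by \eqref{e9} into the left side of \eqref{e2c} makes the terms cancel identically, so \eqref{e2c} is automatic for any solution of \eqref{e9} and no limiting argument is needed (though yours would also work).

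The genuine gap is the final statement $\psi^-_n=\phi^-_n+r_n\widehat{\psi}^-_n$. You correctly reduce it to showing $|a^+_n\phi^+_n|=o(\widehat{\psi}^-_n)$, but then leave that open, and the escape routes you sketch do not work: the crude bound $|a^+_n\phi^+_n|\le C_n\|{\bf a}\|_N/|\phi^+_n|$ cannot be compared with $\widehat{\psi}^-_n$ (there is no relation between the scalar tail $C_n$ and the decay rate of $\phi^-$), and ``absorbing a factor of $|\phi^-_n|$ into the $\ell^1$ hypothesis'' would change the theorem's hypotheses. The missing idea is a bootstrap through the fixed-point identity — this is what the paper's small inner lemma about ${\cal M}$ (vectors with $|x^+_n|\lesssim(\widehat{\psi}^-_n)^2$, $|x^-_n|\lesssim 1$ are mapped to vectors of the same type, with the factor $\|\beta_n(1+|\phi^+_n\phi^-_n|^2)\|_{\ell^1}$) accomplishes before summing the Neumann series. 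Concretely, once you know $|(\phi^+_m)^2a^+_m|\le A$ and $|a^-_m|\le A$ for $m\ge N$, feed these back into $a^+_n=-({\cal M}{\bf a})^+_n$ and use $|\phi^-_{n+k}|\le\widehat{\psi}^-_n$ together with $|\phi^+_n|\le|\phi^+_{n+k}|$:
\begin{equation*}
\frac{|a^+_n\phi^+_n|}{\widehat{\psi}^-_n}\;\le\;A\sum_{k\ge 0}|\beta_{n+k}|\left[\frac{|\phi^-_{n+k}|}{\widehat{\psi}^-_n}\,\frac{|\phi^+_n|}{|\phi^+_{n+k}|}\;+\;\frac{|\phi^-_{n+k}|^2\,|\phi^+_n|}{\widehat{\psi}^-_n}\right]\;\le\;A\sum_{k\ge n}|\beta_k|\left(1+|\phi^+_k\phi^-_k|\right)\;\longrightarrow\;0,
\end{equation*}
the last sum being a tail of an $\ell^1$ series since $1+t\le\tfrac32(1+t^2)$. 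Together with $|(a^-_n-1)\phi^-_n|\le|a^-_n-1|\,\widehat{\psi}^-_n$ this yields $r_n\to0$ under exactly the stated hypotheses, with no rate comparison between $C_n$ and $\widehat{\psi}^-_n$ required.
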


\begin{remark}
If $|\phi_n^-|$ is monotone nonincreasing, then
we may simply write
$\psi_n^- = (1 + r_n) \phi_n^-$,
with 
$\lim_{n \to \infty}{r_n} = 0$.  
If the product
$\phi_n^+ \phi_n^-$ is bounded, it suffices for this theorem
to assume that $\beta_n \in \ell^1$; 
circumstances under which this
is guaranteed are discussed below in Lemma \ref{secondsol}, Theorem \ref{Snestimates1} and Theorem \ref{Snestimates2}.
\end{remark}
 
\begin{proof}
For $N$ sufficiently large, we 
claim that ${\cal M}$ is a strict contraction on 
${{\cal B}_N}$.  
To see this, we introduce the shorthand
$\sup\left|\begin{pmatrix} X^+_n\\ X^-_n\end{pmatrix}\right|$
for $\sup_{n \ge N}(|X^+_n| + |X^-_n|)$, and observe that
$\| {\bf X}\|_{{\cal B}_N} = \sup_{m\ge N}\left|
\begin{pmatrix}(\phi^+_m)^2 & 0\\ 0 & 1\end{pmatrix} {\bf X}_m\right|$.
Thus
\begin{align*}
\|{\cal M} {\bf X}\|_{{\cal B}_N}&=\sup_{n \ge N}\Biggl|\begin{pmatrix}
|\phi^+_{n}|^2 & 0\\ 0 & 1\end{pmatrix}\sum^\infty_{k=0}
\beta_{n+k}
\begin{pmatrix}\phi^+_{n+k}\phi^-_{n+k} & (\phi^-_{n+k})^2\\ 
-(\phi^+_{n+k})^2 & - \phi^+_{n+k}\phi^-_{n+k}\end{pmatrix} {\bf X}_{n+k}\Biggr|\\
&\qquad =  
\sup_{n\ge N}\Biggl|\sum_{k=0}^\infty \beta_{n+k} \begin{pmatrix}
|\phi^+_{n}|^2 & 0\\ 0 & 1\end{pmatrix}
\begin{pmatrix}\phi^+_{n+k}\phi^-_{n+k} & (\phi^-_{n+k})^2\\ 
-(\phi^+_{n+k})^2 & - \phi^+_{n+k}\phi^-_{n+k}\end{pmatrix} \times\\
&\qquad \quad \quad\quad\quad\quad
\begin{pmatrix}
|\phi^+_{n+k}|^2 & 0\\ 0 & 1\end{pmatrix}^{-1}
\begin{pmatrix}
|\phi^+_{n+k}|^2 & 0\\ 0 & 1\end{pmatrix} {\bf X}_{n+k}\Biggr|\\
&\qquad =  
\sup_{n\ge N}\Biggl|\sum_{k=0}^\infty \beta_{n+k} 
\begin{pmatrix}\phi^+_{n+k}\phi^-_{n+k}\left|\phi_n^+/\phi_{n+k}^+\right|^2 & (\phi^+_{n+k}\phi^-_{n+k})^2 \left|\phi_n^+/\phi_{n+k}^+\right|^2\\ 
- 1  & - \phi^+_{n+k}\phi^-_{n+k}\end{pmatrix} \begin{pmatrix}
|\phi^+_{n+k}|^2 & 0\\ 0 & 1\end{pmatrix} {\bf X}_{n+k}\Biggr|\\
&\qquad \le \|{\bf X}\|_{{\cal B}_N}
\sup_{n\ge N} \sum_{k=0}^\infty\left|\beta_{n+k}\right| \max(|\phi^+_{n+k}\phi^-_{n+k}|+|\phi^+_{n+k}\phi^-_{n+k}|^2, 1+ |\phi^+_{n+k}\phi^-_{n+k}|)\\
&\qquad \le 2 \|{\bf X}\|_{{\cal B}_N}
\sum_{k=0}^\infty\left|\beta_{N+k}\right|(1+|\phi^+_{N+k}\phi^-_{N+k}|^2)\\
\end{align*}

We then ask whether there is a solution to
$$
{\bf a}_n=\begin{pmatrix} 0 \\ 1\end{pmatrix} - ({\cal M} {\bf a})_n,
$$
and conclude by the contraction mapping theorem that there is, 
for $N$ large enough that 
$$\sum_{k=0}^\infty\left|\beta_{N+k}\right|(1+|\phi^+_{N+k}\phi^-_{N+k}|^2) < \frac1 2 .$$
Indeed, therefore
$({\bf 1}+{\cal M}) {\bf a}= \begin{pmatrix} 0 \\ 1\end{pmatrix}$ is 
uniquely solved by the 
norm-convergent Neumann series 
\begin{equation}\label{Neum}
{\bf a}=\left(\sum^\infty_{\ell =0}(-{\cal M})^\ell\right) \begin{pmatrix} 0 \\ 1\end{pmatrix}.
\end{equation} Being dominated by a geometric series, the convergence of 
\eqref{Neum} is exponentially fast.

For the final statement, we need a lemma about the operator
${\cal M}$:
\begin{lemma}
Suppose that $|x_n^+| \le C_1 |\phi_n^-|^2$ and 
$|x_n^-| \le C_2$.  Then
\be
|{\cal M} x|_n^+ \le M \,\,(C_1 + C_2) |\phi_n^-|^2
\ee
and
\be
|{\cal M} x|_n^- \le M \,\,(C_1 + C_2),
\ee
where $M  :=  \|\beta_n (1+|\phi_n^+ \phi_n^-|^2)\|$.
\end{lemma}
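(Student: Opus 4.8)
The plan is to estimate the two components of $({\cal M}x)_n$ directly from the series defining ${\cal M}$, using the hypotheses on $x^\pm$ together with two elementary scalar inequalities. Writing out the matrix--vector product,
\begin{align*}
({\cal M}x)_n^+ &= \sum_{k=0}^\infty \beta_{n+k}\bigl(\phi^+_{n+k}\phi^-_{n+k}\, x_{n+k}^+ + (\phi^-_{n+k})^2\, x_{n+k}^-\bigr),\\
({\cal M}x)_n^- &= -\sum_{k=0}^\infty \beta_{n+k}\bigl((\phi^+_{n+k})^2\, x_{n+k}^+ + \phi^+_{n+k}\phi^-_{n+k}\, x_{n+k}^-\bigr),
\end{align*}
and substituting $|x_m^+| \le C_1 |\phi_m^-|^2$ and $|x_m^-| \le C_2$, every summand of the second line is controlled by $|\beta_{n+k}|\bigl(C_1|\phi^+_{n+k}\phi^-_{n+k}|^2 + C_2|\phi^+_{n+k}\phi^-_{n+k}|\bigr)$, and every summand of the first line by $|\beta_{n+k}|\,|\phi^-_{n+k}|^2\bigl(C_1|\phi^+_{n+k}\phi^-_{n+k}| + C_2\bigr)$.

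The key observation is that, with $t = |\phi^+_{n+k}\phi^-_{n+k}| \ge 0$, both $t$ and $t^2$ are $\le 1 + t^2$; hence each of the brackets $C_1 t^2 + C_2 t$ and $C_1 t + C_2$ is bounded by $(C_1+C_2)(1+t^2) = (C_1+C_2)\bigl(1+|\phi^+_{n+k}\phi^-_{n+k}|^2\bigr)$. For the ``$-$'' component this already gives
\[
|({\cal M}x)_n^-| \le (C_1+C_2)\sum_{k=0}^\infty |\beta_{n+k}|\bigl(1+|\phi^+_{n+k}\phi^-_{n+k}|^2\bigr) \le M(C_1+C_2),
\]
since the series is a tail of the $\ell^1$ sequence $\beta_m(1+|\phi^+_m\phi^-_m|^2)$. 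For the ``$+$'' component I would first pull the factor $|\phi^-_{n+k}|^2$ out in front and replace it by $|\phi^-_n|^2$, then apply the same bracket estimate and sum, obtaining $|({\cal M}x)_n^+| \le M(C_1+C_2)|\phi^-_n|^2$.

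The one point needing care --- and the only real obstacle --- is the step $|\phi^-_{n+k}|^2 \le |\phi^-_n|^2$ in the ``$+$'' estimate, which presumes $|\phi^-_m|$ is nonincreasing for $m \ge N$. Under the standing hypotheses of Theorem~\ref{Banconv} one knows only $\phi^-_m \to 0$, so to be safe I would either (i) run the whole lemma with the monotone majorant $\widehat\psi^-_n := \max_{m\ge n}|\phi^-_m|$ in place of $|\phi^-_n|$ (noting $|\phi^-_{n+k}| \le \widehat\psi^-_n$ and reading the hypothesis as $|x_m^+|\le C_1(\widehat\psi^-_m)^2$), which is exactly the form in which the final assertion of Theorem~\ref{Banconv} is phrased, or (ii) remark that for $N$ large enough the regime of interest has $|\phi^-_m|$ monotone. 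Absolute convergence of all the series is automatic from $\beta_m(1+|\phi^+_m\phi^-_m|^2)\in\ell^1$, which is also what makes $M$ finite, so no further justification of interchanges of summation is required.
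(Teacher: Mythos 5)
Your proposal is correct and is exactly the ``easy estimate from the definition of ${\cal M}$'' that the paper invokes without writing out: expand the two components of the series, insert the hypotheses, and use $t,\,t^2\le 1+t^2$ with $t=|\phi^+_{n+k}\phi^-_{n+k}|$ to reach the constant $M=\|\beta_m(1+|\phi^+_m\phi^-_m|^2)\|_{\ell^1}$. Your caveat about the step $|\phi^-_{n+k}|\le|\phi^-_n|$ is well taken and is resolved precisely as you suggest: the lemma is applied in the theorem through the monotone majorant $\widehat\psi^-_n=\max_{m\ge n}|\phi^-_m|$ (the monotone case being singled out only in the remark following the theorem), so reading the bound with $\widehat\psi^-_n$ in place of $|\phi^-_n|$ matches the paper's intended use.
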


The lemma is an easy estimate from the definition of ${\cal M}$.

The proof of the final statement of the theorem then requires choosing $N$ sufficiently large that the coefficients in the conclusions of the lemma are small enough that
${\cal M}$ is a contraction, 
and summing the Neumann series \eqref{Neum}.
\end{proof}

\subsection{Construction of a second solution and estimates of the product of the two solutions}

Since understanding the asymptotic behavior of the solutions of the perturbed equation requires knowledge of a full set of independent solutions
$\{\phi_n^+, \phi_n^-\}$
to the original equation, we 
recall a standard reduction-of-order formula 
showing that the subdominant solution determines a
second, independent solution, which grows at infinity. 
(E.g., the text \cite{Hart} treats this argument in the continuous case
in \S XI.2, and it can be found in the discrete literature in numerous places, including
\cite{Tr1,ChSh08}.)
The following simple formula 
does not require a subdominant solution, only one that is nonvanishing.  It is true by direct verification that $\psi_n^+$ solves \eqref{one*} and that $W = 1$ when $n=m$.  (Of course it 
is 
derived by positing that 
$\psi_n^+ = \gamma_n \psi_n^-$, substituting, and using the Wronski identity to determine $\gamma_n $.)  

\begin{lemma}\label{secondsoln} (Standard)
Suppose that \eqref{one*} has a solution that is nonzero for all $n$, $n = m, \dots, M$. If
\begin{equation} \label{secondsol}
\psi_n^+ := \begin{cases} 0 & \text{ if } n = m \\  \psi_n^- \ds \sum_{k=m}^{n-1}\frac{1}{\psi_k^- \psi_{k+1}^-} & \text{ if } n>m.
\end{cases}
\end{equation}
then $\psi_n^+$ is an independent solution of \eqref{one*} on the interval 
$[m, M]$.
\end{lemma}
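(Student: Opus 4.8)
The plan is to verify the formula \eqref{secondsol} directly, as announced in the sentence preceding the lemma, and then to deduce independence from the fact — recalled around \eqref{Wdef} — that the Wronskian of two solutions of \eqref{one*} does not depend on $n$.

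First I would set $S_n := \sum_{k=m}^{n-1}\bigl(\psi_k^-\psi_{k+1}^-\bigr)^{-1}$ for $n\ge m$, reading the empty sum as $S_m=0$, so that \eqref{secondsol} becomes simply $\psi_n^+=\psi_n^-S_n$. The hypothesis that $\psi^-$ is nonzero for $m\le n\le M$ is exactly what is needed for every summand, hence for $S_n$ and for $\psi_n^+$, to be well defined on $[m,M]$. The single identity that carries the proof is
\[
\np S_n=\frac{1}{\psi_n^-\psi_{n+1}^-},\qquad \nm S_n=\frac{1}{\psi_{n-1}^-\psi_n^-}.
\]

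Next, for $m<n<M$ I would insert $\psi_{n\pm1}^+=\psi_{n\pm1}^-S_{n\pm1}$ into $(-\Delta+V)\psi_n^+$ and collect the terms proportional to $S_n$; the resulting algebraic identity is
\[
(-\Delta+V)\psi_n^+=\bigl[(-\Delta+V)\psi_n^-\bigr]\,S_n\;-\;\psi_{n+1}^-\,(\np S_n)\;+\;\psi_{n-1}^-\,(\nm S_n).
\]
The bracketed factor vanishes because $\psi^-$ solves \eqref{one*}, and the two surviving terms equal $-1/\psi_n^-$ and $+1/\psi_n^-$ by the identity above, so they cancel; thus $\psi^+$ solves \eqref{one*} at every interior index. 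Independence then follows from a one-line Wronskian computation: since $W[\psi^-,\psi^+]$ is constant in $n$, I evaluate it at $n=m$, where $\psi_m^+=0$ and $\psi_{m+1}^+=\psi_{m+1}^-\bigl(\psi_m^-\psi_{m+1}^-\bigr)^{-1}=1/\psi_m^-$, to get $W=\psi_m^-\psi_{m+1}^+-\psi_{m+1}^-\psi_m^+=1\ne0$.

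There is no genuine obstacle in this argument; its whole content is the telescoping identity for $S_n$. The only points demanding a little care are (a) handling the endpoint conventions, so that the empty sum at $n=m$ and the absence of $\psi_{m-1}$ create no difficulty, and (b) observing that every division performed is licensed by the nonvanishing hypothesis. If one would rather derive \eqref{secondsol} than verify it, the same computation read backwards works: positing $\psi_n^+=\gamma_n\psi_n^-$, substituting into \eqref{one*}, and simplifying shows that $\psi_n^-\psi_{n+1}^-(\np\gamma_n)$ is independent of $n$; normalizing that constant to $1$ and taking $\gamma_m=0$ reproduces the stated formula.
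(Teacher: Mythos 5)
Your proof is correct and follows essentially the same route as the paper, which simply asserts that $\psi_n^+$ solves \eqref{one*} by direct verification and that $W=1$ at $n=m$ (noting parenthetically the reduction-of-order derivation via $\psi_n^+=\gamma_n\psi_n^-$, which you also mention). Your telescoping computation and Wronskian evaluation just make that verification explicit.
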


The lemma has some simple but useful consequences:

\begin{cor}\label{boundedcorollary}
Suppose that for some $a$, $|a| > 1$, \eqref{one*} has a solution such that $a^n \psi_n^-  \to 1$ as $n \to \infty$.  Then
\begin{itemize}
\item Every solution $\psi_n$ of \eqref{one*} that is independent of $\psi_n^-$ is exponentially increasing, i.e., $a^{-n} \psi_n \to C \ne 0$ as 
$n \to \infty$.
\item For any solution $\psi_n$, the product $\psi_n \psi_n^-$ is bounded independently of $n$.
\item Given any boundary condition of the form $a \psi_1 + b \psi_2 = 0$, if $0 \notin {\rm sp}( - \Delta + V)$,
then the Green matrix for $(- \Delta + V)$ on $n \ge 1$ is uniformly bounded.
\end{itemize}
\end{cor}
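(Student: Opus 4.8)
The plan is to prove the three bullets in order, using Lemma \ref{secondsoln} for the first, a direct computation of the product for the second, and an expansion of the Green matrix in the basis $\{\psi^-,\psi^+\}$ for the third. For the first bullet, I would apply the reduction-of-order formula \eqref{secondsol} with $\psi^- = \psi_n^-$, the given subdominant solution: since $a^n\psi_n^- \to 1$ with $|a|>1$, the solution $\psi_n^-$ is eventually nonzero, so $\psi_n^+$ as defined is an independent solution. Then $1/(\psi_k^-\psi_{k+1}^-) \sim a^{2k+1}$, so the partial sums $\sum_{k=m}^{n-1} 1/(\psi_k^-\psi_{k+1}^-)$ form a geometric-type series with ratio $a^2$, whence $\sum_{k=m}^{n-1} 1/(\psi_k^-\psi_{k+1}^-) \sim \frac{a^{2n}}{a^2-1}\cdot\frac{1}{a}$ up to lower-order terms; multiplying by $\psi_n^- \sim a^{-n}$ gives $\psi_n^+ \sim \frac{a^n}{a^2-1}\cdot\frac1a$, i.e. $a^{-n}\psi_n^+ \to C \ne 0$. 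Any solution $\psi$ independent of $\psi^-$ is a nonzero multiple of $\psi^+$ plus a multiple of $\psi^-$; the $\psi^+$ part dominates, so $a^{-n}\psi_n \to C' \ne 0$. I would need to be a little careful phrasing the asymptotics of the partial sum (it is a ratio asymptotics statement, handled by Cesàro/Stolz or by a direct estimate that the tail from $m$ to $n-1$ is dominated by its last few terms), but this is routine.

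For the second bullet, write $\psi_n = \alpha \psi_n^+ + \gamma \psi_n^-$ for constants $\alpha, \gamma$. Then $\psi_n\psi_n^- = \alpha\,\psi_n^+\psi_n^- + \gamma\,(\psi_n^-)^2$. The second term tends to $0$ since $\psi_n^-\to 0$. For the first term, from the asymptotics just established, $\psi_n^+\psi_n^- \to C \cdot 1 = C$ (a finite constant), being the product of something $\sim C a^n$ and something $\sim a^{-n}$. Hence $\psi_n\psi_n^-$ converges, so it is bounded uniformly in $n$; the finitely many small-$n$ values contribute nothing to boundedness. (One should note that if $\psi$ is a multiple of $\psi^-$ the product $\to 0$, still bounded.)

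For the third bullet, fix the boundary condition $a\psi_1 + b\psi_2 = 0$ and assume $0 \notin \mathrm{sp}(-\Delta+V)$, so the Green matrix $G_{nm}$ exists and is the kernel of the resolvent at energy $0$. The standard formula expresses it via the two solutions: one, call it $u_n$, satisfying the boundary condition at $n=1$, and the subdominant $\psi_n^-$ (which must be the $\ell^2$ solution at $+\infty$, since $0\notin\mathrm{sp}$ guarantees exactly one $\ell^2$-direction at infinity and $\psi_n^-\to 0$ exponentially provides it), with
$$
G_{nm} = \frac{1}{\widetilde W}\, u_{\min(n,m)}\,\psi^-_{\max(n,m)},
$$
where $\widetilde W$ is the (nonzero, constant) Wronskian of $u$ and $\psi^-$. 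The solution $u_n$ is some fixed solution of \eqref{one*}, hence by the first bullet either $u = c\psi^-$ (impossible, as then $\widetilde W = 0$) or $u_n \sim C'' a^n$, exponentially increasing. Then $|G_{nm}| \le |\widetilde W|^{-1}\, |u_{\min(n,m)}|\,|\psi^-_{\max(n,m)}|$, and on the region $n \le m$ this is $|\widetilde W|^{-1}|u_n||\psi_m^-| \lesssim a^n \cdot a^{-m} \le 1$ since $n \le m$; symmetrically for $n \ge m$. So $G_{nm}$ is bounded uniformly in $(n,m)$, which (for the relevant notion — entrywise, or in some fixed operator norm for the matrix-valued Green kernel) is the claim.

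I expect the main obstacle to be the first bullet — specifically, upgrading the hypothesis $a^n\psi_n^-\to 1$ (a one-sided ratio-type statement) to the clean conclusion $a^{-n}\psi_n^+ \to C\ne 0$ for the reduction-of-order second solution, and then to $a^{-n}\psi_n\to C'\ne 0$ for a \emph{general} independent solution. The subtlety is that $\psi_n^-$ need not be monotone and the convergence $a^n\psi_n^-\to1$ need not be eventually monotone either, so the partial-sum asymptotics must be justified by a Stolz–Cesàro argument rather than a naive geometric-series comparison; once that is in hand, the second and third bullets are short consequences. Everything else (the Green-matrix formula, its Wronskian denominator, the fact that $\psi^-$ spans the unique decaying direction when $0\notin\mathrm{sp}$) is standard and can be cited.
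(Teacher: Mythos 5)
Your proposal is correct and follows essentially the same route as the paper: construct $\psi^+$ via the reduction-of-order formula of Lemma \ref{secondsoln}, estimate the resulting geometric-type sum to get $a^{-n}\psi_n^+\to C\ne 0$ (the paper does this by bounding the partial sums above and below, where you invoke a Stolz--Ces\`aro argument for the same purpose), deduce the boundedness of $\psi_n\psi_n^-$ from the linear-combination decomposition, and bound $G_{mn}=\psi^+_{\min(m,n)}\psi^-_{\max(m,n)}/W$ using the exponential asymptotics of the two factors. The only differences are expository (you are slightly more explicit about the Wronskian being nonzero and about the $n\le m$ versus $n\ge m$ cases), not substantive.
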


\begin{proof}
Because every solution to \eqref{one*} is a linear combination of $\psi_n^-$ and $\psi_n^+$ as defined in \eqref{secondsol}, it suffices to show the 
first two statements for $\psi_n = \psi_n^+$, which behaves asymptotically like
\be
a^{-n} \sum^{n-1}_k {a^{2 k + 1}}.
\ee
Since this geometric series can be bounded above and below by  ($C_1 + C_2 \, a^{-n} \int_1^{n-1}{a^{2 x +1} dx} = \frac{C_2}{\ln a} a^{n-1} + O(1)$) $C_1 + C_2 a^{n}$ the first two statements follow.

If $0 \ne {\rm sp}(- \Delta + V)$, then the Green matrix is defined, and
\be
G_{mn} = \frac{\psi_{\min(m,n)}^+ \psi_{\max(m,n)}^-}{W[\psi^-, \psi^{+}]},
\ee
where $\psi_n^{+}$ satisfies the boundary condition at $n=1,2$. 
Since this is bounded on any finite set of indices $m,n$, the third statement
follows from the asymptotic estimate of $\psi_n^+$ in the first statement.
\end{proof}

An important case where these estimates apply is captured in the following.

\begin{cor}\label{nearconstantpotential}
Suppose that for some constant $V_\infty \notin [-4, 0]$, $V - V_\infty \in \ell^1$.  Then there is a solution to \eqref{one*} of the type 
$\psi_n^- \sim a^{-n}$ for $|a| > 1$ and an independent solution $\psi_n^+ \sim a^{n}$, and the statements of Corollary~{\rm\ref{boundedcorollary}}
apply.
\end{cor}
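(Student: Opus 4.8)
The plan is to invoke Theorem~\ref{Banconv} with the comparison equation \eqref{compar} taken to have the constant potential $V^0 \equiv V_\infty$, for which a full solution basis is explicit. First I would solve the constant-coefficient recursion: substituting $\phi_n = z^n$ into $(-\Delta + V_\infty)\phi = 0$ yields $\phi_{n+1} + \phi_{n-1} = (2 + V_\infty)\phi_n$, hence the characteristic equation $z + z^{-1} = 2 + V_\infty$. The condition $V_\infty \notin [-4,0]$ is exactly $2 + V_\infty \notin [-2,2]$, so the two roots are real and reciprocal, one of modulus $|a| > 1$ and the other $a^{-1}$ of modulus $< 1$. Set $\phi_n^+ := a^n$ and $\phi_n^- := a^{-n}$. (When $V_\infty < -4$ one gets $a < -1$; alternatively that case reduces to $V_\infty > 0$ via Remark~\ref{4 vs 0}.) A direct computation gives $W[\phi^-,\phi^+] = a - a^{-1} \neq 0$, so $\beta_n = (V_n - V_\infty)/W$ is a fixed nonzero multiple of $V_n - V_\infty$, whence $\beta_n \in \ell^1$ by hypothesis.

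Next I would verify that this basis satisfies the hypotheses of Theorem~\ref{Banconv}: $\lim_{n\to\infty}\phi_n^- = 0$, the modulus $|\phi_n^-| = |a|^{-n}$ is monotone (in fact strictly) decreasing, $|\phi_n^+| = |a|^n$ is monotone nondecreasing, and the product $\phi_n^+\phi_n^- \equiv 1$ is bounded. Consequently, by the Remark following Theorem~\ref{Banconv}, the requirement $\beta_n(1 + |\phi_n^+\phi_n^-|^2) \in \ell^1$ collapses to $\beta_n \in \ell^1$, already established. Theorem~\ref{Banconv} (and the same Remark) then yields a solution $\psi_n^-$ of \eqref{one*} with $\psi_n^- = (1 + r_n)\phi_n^- = (1 + r_n)a^{-n}$, where $r_n \to 0$; equivalently $a^n \psi_n^- \to 1$ with $|a| > 1$.

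Finally, with this $\psi_n^-$ and this $a$ the hypothesis of Corollary~\ref{boundedcorollary} holds verbatim. Therefore every solution of \eqref{one*} independent of $\psi_n^-$ satisfies $a^{-n}\psi_n \to C \neq 0$ — in particular the reduction-of-order solution $\psi_n^+$ furnished by Lemma~\ref{secondsoln} obeys $\psi_n^+ \sim a^n$ — the product $\psi_n\psi_n^-$ is bounded for every solution $\psi_n$, and, when $0 \notin \mathrm{sp}(-\Delta + V)$, the Green matrix for $-\Delta + V$ is uniformly bounded. This is precisely the asserted conclusion.

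The argument is essentially bookkeeping built on the two preceding results; the only points demanding a little care are the reduction of $V - V_\infty \in \ell^1$ to $\beta_n \in \ell^1$ (which rests on the Wronskian of $a^{\pm n}$ being a nonzero constant), the verification that the dichotomy $|a| \neq 1$ is equivalent to $V_\infty \notin [-4,0]$, and the minor sign bookkeeping separating $V_\infty > 0$ from $V_\infty < -4$. I do not expect any substantive obstacle beyond these.
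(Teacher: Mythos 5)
Your proposal is correct and follows essentially the same route as the paper: take the constant comparison potential $V^0 \equiv V_\infty$, observe that $(-\Delta+V_\infty)\phi=0$ has the explicit exponential basis $a^{\pm n}$ with $|a|>1$ precisely because $V_\infty \notin [-4,0]$ (handling $V_\infty < -4$ via Remark~\ref{4 vs 0}), and then invoke Theorem~\ref{Banconv} together with Corollary~\ref{boundedcorollary}. Your filled-in details (constancy of the Wronskian reducing $V-V_\infty\in\ell^1$ to $\beta_n\in\ell^1$, boundedness of $\phi_n^+\phi_n^-$) are exactly the observations the paper's brief proof relies on.
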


\begin{proof}
We can apply Theorem \ref{Banconv} and Corollary \ref{boundedcorollary} once it is observed that
the comparison equation
$$
(- \Delta + V_\infty) \phi_n = 0
$$
has an exponentially decreasing solution, {\it viz}., assuming $V_\infty > 0$, $\phi_n^- = a^{-n}$ for
$a = \frac 1 2 \left(V_\infty +\sqrt{V_\infty^2 + 4 V_\infty}\right)$.  
(The case $V_\infty < -4$ similarly has an exponentially decreasing solution, according to Remark \ref{4 vs 0}.)

\end{proof}

Further conditions for the existence of exponentially decreasing and exponentially increasing solutions may be found in \cite{wong4}.

The existence of a more rapidly decreasing solution
has similar implications:

\begin{cor}\label{superexp}
Suppose that for some $a>1,b>1$, equation \eqref{one*} has a solution such that 
$a^{n^b} \psi_n^-  \to 1$ as $n \to \infty$.  Then
\begin{itemize}
\item Every solution $\psi_n$ of \eqref{one*} that is independent of $\psi_n^-$ 
increases rapidly as $n \to \infty$, but
$a^{-n^b} \psi_n \to 0$ as 
$n \to \infty$.
\item For any solution $\psi_n$, the product $\psi_n \psi_n^-$ is bounded independently of $n$.
\item Given any boundary condition of the form $a \psi_1 + b \psi_2 = 0$, if $0 \ne {\rm sp}(- \Delta + V)$,
then the Green matrix for $(- \Delta + V)$ on $n \ge 1$ is uniformly bounded.
\end{itemize}
\end{cor}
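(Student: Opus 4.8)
The plan is to follow the proof of Corollary~\ref{boundedcorollary} essentially verbatim, with the geometric growth there replaced by the super-exponential growth forced by the hypothesis $a^{n^b}\psi_n^-\to 1$. First, since $a^{n^b}\psi_n^-\to 1$ with $a>1$, we have $|\psi_n^-|\ge\tfrac12 a^{-n^b}>0$ for every $n$ beyond some $m$, so Lemma~\ref{secondsoln} applies on $[m,\infty)$ and yields an independent solution $\psi_n^+=\psi_n^-\sum_{k=m}^{n-1}(\psi_k^-\psi_{k+1}^-)^{-1}$. Every solution of \eqref{one*} is a linear combination of $\psi_n^-$ and $\psi_n^+$, and a solution independent of $\psi_n^-$ necessarily has a nonzero coefficient of $\psi_n^+$, so it is enough to prove the first two bullets for $\psi_n=\psi_n^+$.

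The heart of the matter is to estimate the sum. Because $\psi_k^-\sim a^{-k^b}$, the $k$-th summand is comparable to $a^{k^b+(k+1)^b}$, and the ratio of consecutive summands is comparable to $a^{(k+2)^b-k^b}$, which tends to infinity, in particular exceeds $2$ for all large $k$, since $b>1$. Hence the partial sum $\sum_{k=m}^{n-1}(\psi_k^-\psi_{k+1}^-)^{-1}$ lies between positive constant multiples of its last term $a^{(n-1)^b+n^b}$ once $n$ is large. Multiplying by $\psi_n^-\sim a^{-n^b}$ shows that $\psi_n^+$ lies between positive constant multiples of $a^{(n-1)^b}$. Consequently $a^{-n^b}\psi_n^+$ is of order $a^{(n-1)^b-n^b}$, and since $n^b-(n-1)^b\sim b\,n^{b-1}\to\infty$ this tends to $0$, giving the first bullet; and $\psi_n^+\psi_n^-$ is of order $a^{(n-1)^b-n^b}\to0$, hence bounded, while for a general solution $\psi_n=\alpha\psi_n^-+\beta\psi_n^+$ the product $\psi_n\psi_n^-=\alpha(\psi_n^-)^2+\beta\,\psi_n^+\psi_n^-$ is a bounded combination of bounded sequences, giving the second bullet.

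For the third bullet I would argue exactly as in Corollary~\ref{boundedcorollary}: when $0\notin{\rm sp}(-\Delta+V)$ the Green matrix exists and $G_{mn}=\psi^+_{\min(m,n)}\psi^-_{\max(m,n)}/W[\psi^-,\psi^+]$, where now $\psi^+$ denotes the solution meeting the boundary condition at $n=1,2$, which is independent of $\psi^-$ and hence, by the previous paragraph, of order $a^{(n-1)^b}$. For $m\le n$ this gives $|G_{mn}|\le C\,a^{(m-1)^b-n^b}\le C\,a^{(m-1)^b-m^b}$, which is bounded uniformly in $m$; the case $m>n$ is symmetric, and $G$ is trivially bounded on any finite block of indices, so $G$ is uniformly bounded.

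The only step that needs any care --- and it is the analogue of the minor subtlety already implicit in Corollary~\ref{boundedcorollary} --- is the two-sided control of the partial sum by its last term: one must verify that the ratios of consecutive summands are eventually bounded below by a fixed constant greater than $1$, so that the tail of the sum is comparable to a convergent geometric series scaled by the last term, and then note that the initial finitely many terms are negligible compared to the last term for large $n$. Everything else is routine bookkeeping, including the passage from the particular $\psi_n^+$ furnished by Lemma~\ref{secondsoln} to an arbitrary independent solution and to the boundary-value solution appearing in the Green matrix, which is handled by the observation about the nonvanishing $\psi^+$-coefficient.
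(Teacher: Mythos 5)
Your proposal is correct and is exactly the argument the paper intends: it mirrors the proof of Corollary~\ref{boundedcorollary} via Lemma~\ref{secondsoln}, with the geometric-series estimate replaced by the observation that the summands $1/(\psi_k^-\psi_{k+1}^-)\sim a^{k^b+(k+1)^b}$ grow so fast that the partial sum is comparable to its last term, which is precisely the "detail left to the interested reader." The bookkeeping for general independent solutions, the product bound, and the Green-matrix bound all match the paper's template.
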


The proof of this is similar to that of Corollary \ref{boundedcorollary}, but details will be left to the interested reader.  On the other hand, 
subdominant solutions that decrease only polynomially fast do not lead to as strong control of the products or of the Green matrix:

\begin{cor}\label{polydecay}
Suppose that for some $a>0$ Equation \eqref{one*} has a solution such that 
$n^a \psi_n^-  \to 1$ as $n \to \infty$.  Then for any solution $\psi$ that is independent 
of $\psi_n^-$, $|\psi_n^- \psi_n| \sim C n$.
\end{cor}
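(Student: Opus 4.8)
The plan is to reduce everything to the explicit second solution furnished by Lemma~\ref{secondsoln} and then carry out an elementary asymptotic analysis of the resulting partial sums. Since $n^a \psi_n^- \to 1$, the solution $\psi_n^-$ is nonzero for all $n \ge m$ for some $m$, so Lemma~\ref{secondsoln} applies on $[m,\infty)$ and produces an independent solution
\begin{equation*}
\psi_n^+ = \psi_n^- \sum_{k=m}^{n-1}\frac{1}{\psi_k^-\psi_{k+1}^-}.
\end{equation*}
Every solution $\psi$ of \eqref{one*} that is independent of $\psi_n^-$ can be written $\psi_n = c\,\psi_n^+ + d\,\psi_n^-$ with $c\ne 0$, so the product $\psi_n^-\psi_n = c\,\psi_n^-\psi_n^+ + d\,(\psi_n^-)^2$, and it suffices to understand the two terms on the right.

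The easy term is $(\psi_n^-)^2 \sim n^{-2a} \to 0$, which contributes only $o(1)$ to $\psi_n^-\psi_n$. For the main term I would write
\begin{equation*}
\psi_n^-\psi_n^+ = (\psi_n^-)^2 \sum_{k=m}^{n-1}\frac{1}{\psi_k^-\psi_{k+1}^-}.
\end{equation*}
From $n^a\psi_n^-\to 1$ one gets $\psi_k^-\psi_{k+1}^- \sim k^{-a}(k+1)^{-a}\sim k^{-2a}$, hence the summand satisfies $\dfrac{1}{\psi_k^-\psi_{k+1}^-}\sim k^{2a}$. Because $a>0$ we have $2a>-1$, so the partial sums diverge, and a standard comparison (the Stolz--Ces\`aro theorem, or direct comparison with $\int_m^n x^{2a}\,dx$) gives
\begin{equation*}
\sum_{k=m}^{n-1}\frac{1}{\psi_k^-\psi_{k+1}^-}\sim \frac{n^{2a+1}}{2a+1}.
\end{equation*}
Multiplying by $(\psi_n^-)^2\sim n^{-2a}$ yields $\psi_n^-\psi_n^+\sim \dfrac{n}{2a+1}$, and therefore
\begin{equation*}
|\psi_n^-\psi_n|\sim \frac{|c|}{2a+1}\,n =: C n,\qquad C>0.
\end{equation*}

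The only point requiring any care is the passage from the pointwise asymptotic $\dfrac{1}{\psi_k^-\psi_{k+1}^-}\sim k^{2a}$ to the asymptotic of the partial sums. This is routine precisely because the series diverges: the finitely many initial terms (from $k=1$ up to $k=m$, or wherever the pointwise estimate becomes effective) and any multiplicative $1+o(1)$ factor in the summand wash out in the Stolz--Ces\`aro limit, since $\sum_{k\le n} k^{2a}\sim n^{2a+1}/(2a+1)$ dominates them. Everything else is bookkeeping, and one should note in passing the contrast with Corollaries~\ref{boundedcorollary} and \ref{superexp}: here the product grows like $n$ rather than remaining bounded, which is why polynomially decaying subdominant solutions give weaker control of $\psi_n^-\psi_n$ (and hence of the Green matrix).
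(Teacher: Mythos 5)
Your proof is correct and follows essentially the same route as the paper: apply the reduction-of-order formula of Lemma~\ref{secondsoln}, note that the summand behaves like $k^{2a}$, compare the partial sums with $n^{2a+1}/(2a+1)$, and multiply by $\psi_n^-\sim n^{-a}$ to get the product growing like $n/(2a+1)$. Your write-up is in fact a bit more careful than the paper's sketch (explicitly decomposing $\psi=c\,\psi^++d\,\psi^-$ and noting the $(\psi_n^-)^2$ term is $o(1)$), but the underlying argument is the same.
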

\begin{proofsk}
The argument being familiar from the proof of Corollary \ref{boundedcorollary},
we content ourselves with the application of Formula 
\eqref{secondsol}.  As before, we may as well assume that 
$\psi_n = \psi_n^+$ as defined by that formula, which is of the form
\be
n^{-a} \sum^{n-1}_k {k^{a} (k+1)^a}.
\ee
This is asymptotic to $n^{-a} \int_1^{n-1}{x^{2a} dx} = \frac{n^{{a+1}}}{2 a + 1} + o(n^{a+1})$. The claimed estimate for the product results when this is multiplied by $\psi_n^-$.
\end{proofsk}

Some converse implications, by which the boundedness of $\phi_n^+ \phi_n^-$ controls 
the asymptotic behavior of solutions, will appear in Section \ref{WKB}.

\section{Refined asymptotic estimates}\label{Lilstuff}

Let $\psi$ be a non-trivial solution to the equation $-\Delta + V=0$, when $V^0_n - V_n$ is small, and $\psi_n = a^+_n \phip_n + a^-_n \phim_n$. In this section, we provide a classification of the parameters $a^+_n$ and $a^-_n$  and describe how they converge.

To begin, we prove a preliminary classification of $a^+$ and $a^-$, distinguishing the exceptional cases where the perturbed solutions only depend on one of the comparison solutions in \eqref{BasicRep} for large $n$:

\begin{proposition}[primary classification] Suppose $\beta_n \phip_n \phim_n \to 0$. Then for any non-trivial solution $\psi_n = a^+_n \phip_n + a^-_n \phim_n$ with $a^\pm_n$ satisfying \eqref{e9}, one of the following must be true:
\begin{enumerate}
\item Given any integer $N$, there is an integer $p>N$ such that both $a^+_p$ and $a^-_p$ are non-zero (this will be treated in Theorem~{\rm\ref{dichothm1}} below).
\item There is an integer ${p_0}$ such that
\be
a^+_{{p_0}+k} = a^+_{{p_0}} \neq 0, \quad a^-_{{p_0}+k} = 0  \quad \forall k \geq 0.
\label{case2eq}
\ee
\item There is an integer ${p_0}$ such that
\be
a^-_{{p_0}+k} = a^-_{p_0} \neq 0, \quad a^+_{{p_0}+k} = 0  \quad \forall k \geq 0.
\ee
\end{enumerate}
\label{anonzero}
\end{proposition}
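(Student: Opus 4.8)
The plan is to prove the sharper statement that if alternative~(1) fails then alternative~(2) or~(3) must hold. I would start from two structural observations about the recursion \eqref{e9}. First, the perturbation matrix $M_n=\beta_n\bpm \phi^+_n\phi^-_n & (\phi^-_n)^2\\ -(\phi^+_n)^2 & -\phi^+_n\phi^-_n\epm$ has both trace and determinant equal to $0$, so $\det(I+M_n)=1+\Tr M_n+\det M_n=1$; in particular each transfer matrix $I+M_n$ is invertible. It follows that for a non-trivial solution the vector $(a^+_n,a^-_n)$ is nonzero for \emph{every} $n$: if it vanished at one index, propagating \eqref{e9} forward and its inverse backward would give $a^\pm_m=0$ for all $m$, hence $\psi\equiv0$. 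Consequently, at any index at most one of $a^+_n,a^-_n$ can be zero.

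Second, I would write \eqref{e9} out componentwise,
\[
a^+_{n+1}=(1+\beta_n\phi^+_n\phi^-_n)\,a^+_n+\beta_n(\phi^-_n)^2\,a^-_n,\qquad a^-_{n+1}=-\beta_n(\phi^+_n)^2\,a^+_n+(1-\beta_n\phi^+_n\phi^-_n)\,a^-_n .
\]
Now suppose alternative~(1) fails. Then there is an $N_0$ with $a^+_pa^-_p=0$ for all $p>N_0$, and, since $\beta_n\phi^+_n\phi^-_n\to0$, I may enlarge $N_0$ to an $N$ with $|\beta_n\phi^+_n\phi^-_n|<1$ for all $n>N$. Fix any $p>N$; by the first observation exactly one of $a^+_p,a^-_p$ vanishes. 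If $a^-_p=0$ (so $a^+_p\neq0$), the componentwise form gives $a^+_{p+1}=(1+\beta_p\phi^+_p\phi^-_p)a^+_p$ and $a^-_{p+1}=-\beta_p(\phi^+_p)^2a^+_p$. Were $a^-_{p+1}\neq0$, the failure of~(1) would force $a^+_{p+1}=0$, i.e.\ $1+\beta_p\phi^+_p\phi^-_p=0$, contradicting $|\beta_p\phi^+_p\phi^-_p|<1$. Hence $a^-_{p+1}=0$, which forces $\beta_p(\phi^+_p)^2=0$ and therefore also $\beta_p\phi^+_p\phi^-_p=0$, so $a^+_{p+1}=a^+_p\neq0$. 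Thus the configuration $(a^+_p,0)$ reproduces itself exactly, and induction yields $a^-_{p+k}=0$ and $a^+_{p+k}=a^+_p$ for all $k\ge0$, which is alternative~(2) with $p_0=p$. The case $a^+_p=0$ is symmetric, now using $|\beta_n\phi^+_n\phi^-_n|<1$ to exclude $1-\beta_p\phi^+_p\phi^-_p=0$, and it produces alternative~(3). Since one of these two cases occurs, the trichotomy is established.

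The only genuine content beyond bookkeeping is the role of the hypothesis $\beta_n\phi^+_n\phi^-_n\to0$. The apparent danger is that a solution could switch back and forth between states with $a^-_n=0$ and states with $a^+_n=0$ infinitely often, never landing in case~(1); the point is that such a switch requires one of the degenerate identities $1\pm\beta_n\phi^+_n\phi^-_n=0$, and these are impossible once $|\beta_n\phi^+_n\phi^-_n|<1$. Once that is ruled out, verifying that a ``pure'' configuration perpetuates itself, and that the surviving coefficient is frozen (because $\beta_p(\phi^\pm_p)^2=0$ forces $\beta_p\phi^+_p\phi^-_p=0$), is immediate from the componentwise recurrence. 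A minor point worth noting is that $\phi^+_p$ or $\phi^-_p$ may vanish at isolated indices, but this never interferes, since the non-vanishing of $(a^+_p,a^-_p)$ is a consequence of $\det(I+M_n)=1$ rather than of any property of the comparison solutions.
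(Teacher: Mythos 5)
Your proof is correct and takes essentially the same route as the paper's: $\det(I+M_n)=1$ rules out vanishing of $(a^+_n,a^-_n)$, and once Case~1 fails, the hypothesis $\beta_n\phi^+_n\phi^-_n\to 0$ guarantees $1\pm\beta_p\phi^+_p\phi^-_p\neq 0$ for all large $p$, so a pure configuration perpetuates itself. The only difference is cosmetic: you spell out the freezing step ($\beta_p(\phi^{\pm}_p)^2=0$ forces $\beta_p\phi^+_p\phi^-_p=0$, hence $a^{\pm}_{p+1}=a^{\pm}_p$) that the paper compresses into ``apply the same argument recursively.''
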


\begin{proof}[Proof of Proposition~{\rm\ref{anonzero}}] Recall the definition in \eqref{e9}. Observe that $\det M_p = \Tr M_p= 0$. Hence, for all $p \in \mathbb{N}$,
\be
\det (I+M_p) = 1 + \Tr M_p + \det M_p = 1, 
\ee which implies that
\be
{\bf a}_n :=\begin{pmatrix} {a_n^+} \\ {a_n^-} \end{pmatrix} = {\bf 0} \text{ for some } m \quad \Leftrightarrow \quad {\bf a}_p = {\bf 0}, \, \forall p \in \mathbb{N} \quad \Leftrightarrow  \quad {\bf a}_1  = {\bf 0}.
\ee

\noindent
Since $\psi$ is not the trivial solution, for any $p$ either $a^+_p \not = 0$ or $a^-_p \not = 0$ (or both). 

Suppose we are not in Case 1. Then there exists $N_0$ such that for all $k \geq N_0$, either $a^+_k=0$ or $a^-_k =0$. Without loss of generality, suppose for some large $p > N_0$, $a^+_p \neq 0$ and $1+ \beta_k \phip_k \phim_k \neq 0$ for all $k \geq p$ (this is possible because $\beta_k \phip_k \phim_k \to 0$). We will show that this corresponds to Case 2.

Observe that
\be
\bpm a^+_{p+1} \\ a^-_{p+1} \epm = (I+M_p ) \bpm a^+_p \\ 0 \epm = a^+_p \bpm 1 + \beta_p \phip_p \phim_p \\ - \beta_p (\phip_p)^2 \epm ,
\label{1111}
\ee which implies $a^+_{p+1} \neq 0$ and as a result, $a^-_{p+1} = 0$. Apply the same argument recursively to obtain \eqref{case2eq}
\end{proof}

\begin{assumptions} Now we focus on Case 1 of Proposition \ref{anonzero}. Without loss of generality, we assume $a^+_1, a^-_1 \neq 0$ and $\sup_n |\beta_n \phip_n \phim_n| < 1$. The latter assumption, together with the assumption in Theorem \ref{dichothm2a} that $\sum_{n=1}^\infty | \beta_n \phip_n \phim_n| < \infty $, guarantees that $p^\pm_n \neq 0$ for all $n$ and that $\lim_{n \to \infty} \Pi^\pm_n \neq 0$.
\label{assump1}
\end{assumptions}

A key observation here is that the recurrence matrix $I+M_n$ in \eqref{e9} can not always be diagonalized, making it impossible to utilize existing techniques in the perturbation theory literature, which heavily relies on the fact that the transfer matrix can be diagonalized (see, e.g., Benzaid--Lutz \cite{BeLu} and the discussion in the Introduction).

A key observation is that the recurrence matrix can be decomposed into the sum of a lower triangular matrix and an upper triangular error matrix:
\begin{align}
I+M_n & =  \bpm 1+\beta_n \phip_n \phim_n & 0 \\ -\beta_n (\phip_n)^2 & 1 - \beta_n \phip_n \phim_n \epm 
+ \bpm     0 & \beta_n (\phim_n)^2 \\ 0 & 0 \epm 
\label{decomposition1} \\
& =:   G_n + E_n \label{Gndef} . 
\end{align}
The advantage of such a decomposition is as follows: let $\Sigma_n$ be defined recursively as
\be
\Sigma_n := \begin{cases} -\beta_n (\phip_n)^2 \ds \prod_{j=1}^{n-1}( 1+ \beta_j \phip_j \phim_j) + (1-\beta_n \phip_n \phim_n) \Sigma_{n-1} , & n \geq 2 ; \\ -\beta_1 (\phip_1)^2 , & n = 1 ,
\end{cases}
\label{Sigmandef}
\ee and let
\be
\Pi^{\pm}_n = \ds \prod_{j=1}^n (1 \pm \beta_j \phip_j \phim_j) := \ds \prod_{j=1}^n p^{\pm}_j.
\label{Pipmndef}
\ee
Under such definitions, we have a closed form for the product $G_n G_{n-1} \dots G_1$:
\be
G_n G_{n-1} \cdots G_1 = G_n \bpm \Pip_{n-1} & 0 \\ \Sigma_{n-1} & \Pim_{n-1} \epm = \bpm \Pip_n & 0 \\ \Sigma_n & \Pim_n \epm .
\ee 

While $G_n$ is an approximation for the recurrence matrix $I+M_n$, we shall show that the 
product $G_n G_{n-1} \cdots G_1$ will serve as an approximation to the actual recurrence relation \eqref{e9} for $a^\pm_n$ under Trench-type conditions on $V^0_n - V_n$:

\begin{theorem}\label{dichothm1}
Let $\phi^{\pm}_n$ be independent solutions to the difference equation $-\Delta + V^0 = 0$. Consider a potential $V$ such that
\be
\ds \sup_n |\Sigma_n| < \infty \text{ and } \ds \sum_{n}^{\infty} \left| (V_n - V^0_n)(\phip_n \phim_n) \right| < \infty.
\ee

If either $\sum_{n=1}^\infty  |\phim_n|^2 < \infty$ or $\sum_{n=1}^\infty |V^0_n - V_n| < \infty$, then any non-trivial solution $\psi$ to the equation $-\Delta + V =0$ can be written as $\psi_n = a^+_n \phip_n + a^-_n \phim_n$ such that one of the following is true:
\begin{enumerate}
\item There is an integer $p$ such that
\be
a^+_{p+k} = a^+_p \neq 0, \quad a^-_{p+k} = 0  \quad \forall k \geq 0.
\ee
\item There is an integer $p$ such that
\be
a^-_{p+k} = a^-_p \neq 0, \quad a^+_{p+k} = 0  \quad \forall k \geq 0.
\ee
\item There exists a constant $f_\infty \not = 0$ such that $a^+_n = \Pi_n^+ f_\infty + o(1)$ and $a^-_n = \Sigma_n f_\infty + o (1)$.
\item There exist constants $f^+_\infty, f^-_\infty$ with $f^-_\infty \neq 0$ such that $a^+_n = \Pi^+_n f^+_\infty + o(1)$ and $a^-_n = \Sigma_n f^+_\infty + f^-_\infty + o(1)$. 
\end{enumerate}
\label{dichothm2a}
\end{theorem}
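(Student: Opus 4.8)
The plan is to invoke Proposition~\ref{anonzero} to dispose of the two degenerate alternatives, and then, in the remaining non-degenerate case, to conjugate the recurrence \eqref{e9} by the explicitly known product $P_n:=G_nG_{n-1}\cdots G_1=\bpm\Pip_n&0\\\Sigma_n&\Pim_n\epm$ computed above, so that the coefficient vector ${\bf a}_n$ evolves under a recursion with an absolutely summable error term; the four cases of the conclusion then correspond to the possible limits of the conjugated vector. To begin, the hypothesis $\sum_n|(V_n-V^0_n)\phip_n\phim_n|<\infty$ forces $\beta_n\phip_n\phim_n\to0$, so Proposition~\ref{anonzero} applies: its Cases 2 and 3 are precisely Cases 1 and 2 of the present theorem, and in its Case 1 we may invoke the standing Assumption~\ref{assump1} that $\sup_n|\beta_n\phip_n\phim_n|<1$ and note that ${\bf a}_1\neq{\bf 0}$ because $\psi$ is nontrivial.

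Next I would set ${\bf a}_n=P_{n-1}{\bf c}_n$ (with $P_0=I$, so ${\bf c}_1={\bf a}_1$). Since $P_n=G_nP_{n-1}$ and $I+M_n=G_n+E_n$ with $E_n=\bpm 0&\beta_n(\phim_n)^2\\0&0\epm$, the recurrence \eqref{e9} becomes ${\bf c}_{n+1}=(I+N_n){\bf c}_n$, where $N_n:=P_n^{-1}E_nP_{n-1}$. Using $P_n^{-1}=\bpm 1/\Pip_n&0\\-\Sigma_n/(\Pip_n\Pim_n)&1/\Pim_n\epm$, a direct computation shows that $N_n$ has rank one and that each of its entries equals $\beta_n(\phim_n)^2$ times one of $\Sigma_{n-1}/\Pip_n$, $\Pim_{n-1}/\Pip_n$, $\Sigma_{n-1}\Sigma_n/(\Pip_n\Pim_n)$, $\Pim_{n-1}\Sigma_n/(\Pip_n\Pim_n)$. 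Under Assumption~\ref{assump1} the condition $\sup_n|\beta_n\phip_n\phim_n|<1$ together with $\sum_n|\beta_n\phip_n\phim_n|<\infty$ makes the infinite products $\Pip_\infty:=\lim_n\Pip_n$ and $\Pim_\infty:=\lim_n\Pim_n$ exist and be nonzero, so with $\sup_n|\Sigma_n|<\infty$ all four multipliers above are bounded uniformly in $n$, whence $\|N_n\|\le C\,|\beta_n(\phim_n)^2|$ for a constant $C$. It then remains to prove $\sum_n|\beta_n(\phim_n)^2|<\infty$, and this is exactly where the dichotomy in the hypothesis is used: if $\sum_n|V_n-V^0_n|<\infty$ then $\beta_n\in\ell^1$ and, since $\phim_n\to0$ is bounded, the claim is immediate; if instead $\sum_n|\phim_n|^2<\infty$, one combines this with $\sum_n|\beta_n\phip_n\phim_n|<\infty$ and the boundedness of $\beta_n(\phip_n)^2$ (which follows from $\sup_n|\Sigma_n|<\infty$ through \eqref{Sigmandef}), writing $|\beta_n(\phim_n)^2|=|\beta_n(\phip_n)^2|\,|\phim_n/\phip_n|^2$ and applying Cauchy--Schwarz after observing that $\phim_n\to0$ together with the Wronskian identity forces $|\phip_n|$ to grow.

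Once $\sum_n\|N_n\|<\infty$ is in hand, the partial products $(I+N_{n-1})\cdots(I+N_1)$ converge to a matrix $Q$, and $Q$ is invertible because $\det Q=\prod_{n\ge1}\det(I+N_n)=\prod_{n\ge1}\bigl(1-(\beta_n\phip_n\phim_n)^2\bigr)^{-1}\in(0,\infty)$ (using $\det(I+M_n)=1$ and $\det P_n=\Pip_n\Pim_n$). Hence ${\bf c}_n\to{\bf c}_\infty:=Q\,{\bf a}_1$, and ${\bf c}_\infty\neq{\bf 0}$ since ${\bf a}_1\neq{\bf 0}$. Writing ${\bf c}_\infty=\bpm f^+_\infty\\g_\infty\epm$ and undoing the conjugation, ${\bf a}_n=P_{n-1}{\bf c}_n$ yields $a^+_n=\Pip_{n-1}c^+_n=\Pip_nf^+_\infty+o(1)$ (because $\Pip_n-\Pip_{n-1}=\Pip_{n-1}\beta_n\phip_n\phim_n\to0$ and $c^+_n\to f^+_\infty$) and $a^-_n=\Sigma_{n-1}c^+_n+\Pim_{n-1}c^-_n=\Sigma_{n-1}f^+_\infty+\Pim_\infty g_\infty+o(1)$. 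If $g_\infty=0$ then $f^+_\infty\neq0$ and we are in Case 3 (with $f_\infty=f^+_\infty$), while if $g_\infty\neq0$ we are in Case 4 (with $f^-_\infty=\Pim_\infty g_\infty\neq0$); the only bookkeeping subtlety is that the stated formulas carry $\Sigma_n$ rather than the $\Sigma_{n-1}$ produced by the conjugation, and these agree up to $o(1)$ provided $\Sigma_n-\Sigma_{n-1}\to0$, which holds for instance whenever $\beta_n(\phip_n)^2\to0$. I expect the principal obstacle to be the summability $\sum_n|\beta_n(\phim_n)^2|<\infty$ in the $\ell^2$ branch of the dichotomy; the conjugation bookkeeping, the convergence and invertibility of the infinite matrix product, and the final read-off are routine.
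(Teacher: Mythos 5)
Your proposal is correct in substance, and its core coincides with the paper's own argument: the triangular decomposition $I+M_n=G_n+E_n$ from \eqref{decomposition1}, the conjugation of \eqref{e9} by the product $P_n=G_n\cdots G_1=\bpm \Pip_n & 0\\ \Sigma_n & \Pim_n\epm$ (your ${\bf c}_n$ are exactly the $f^\pm_n$ defined in \eqref{eq2p2}), and the reduction of the whole problem to the summability of $\beta_n(\phim_n)^2$, which is precisely where the hypothesis ``$\sum|\phim_n|^2<\infty$ or $\sum|V_n-V^0_n|<\infty$'' enters, are the same in both treatments; likewise both dispose of the degenerate alternatives via Proposition~\ref{anonzero} and Assumptions~\ref{assump1}. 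Where you genuinely diverge is the endgame. The paper writes out the two scalar recursions \eqref{fnpineq1} and \eqref{fnmineq1}, proves $\sup_n(|f^+_n|+|f^-_n|)<\infty$, and then runs a hands-on dichotomy on the ratio $r_n=f^-_n/f^+_n$, with telescoping product estimates and a logarithm argument, to conclude either $f^+_n\to f^+_\infty\neq 0$ and $f^-_n\to 0$, or $f^-_n\to f^-_\infty\neq 0$ with $f^+_n$ convergent. You instead note ${\bf c}_{n+1}=(I+N_n){\bf c}_n$ with $\sum_n\|N_n\|<\infty$ and invoke convergence of the infinite matrix product together with invertibility of its limit (via $\det(I+M_n)=1$, so $\det(I+N_n)=\bigl(1-(\beta_n\phip_n\phim_n)^2\bigr)^{-1}$), obtaining convergence of both components to a nonzero limit vector in one stroke; this is cleaner and in fact slightly stronger, since it shows $(f^+_n,f^-_n)$ always converges. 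What the paper's ratio argument buys is flexibility: it is the version that adapts, with $f^-_n/\Sigma_{n-2}$ in place of $f^-_n$, to Theorem~\ref{dichothm2}, where $\Sigma_n\to\infty$ and your bound $\|N_n\|\le C|\beta_n(\phim_n)^2|$ is no longer available. Two small caveats: in the branch $\sum|\phim_n|^2<\infty$ your derivation of $\sum|\beta_n(\phim_n)^2|<\infty$ needs $\inf_n|\phip_n|>0$, and the Wronskian identity only forces $\max(|\phip_n|,|\phip_{n+1}|)$ to be large, not each $|\phip_n|$ separately — the paper is equally terse at the corresponding point (``since either $\beta_k$ or $|\phim_k|^2$ are summable''), so this is a shared wrinkle rather than a gap you introduced; and your $\Sigma_{n-1}$-versus-$\Sigma_n$ bookkeeping remark reflects a discrepancy already present between the theorem's statement and the paper's own proof, since $\Sigma_n-\Sigma_{n-1}\to 0$ requires $\beta_n(\phip_n)^2\to 0$, which is not among the stated hypotheses.
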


In Example \ref{constantpotentialexample}, we will construct a potential $V$ such that $\ds \sup_n |\Sigma_n|<\infty$ but $\Sigma_n$ fluctuates as $n$ goes to infinity.

Next, we prove a result such that $|\Sigma_n|$ may go to infinity:
\begin{theorem}\label{dichothm2}
Let the definitions be the same as in Theorem~{\rm\ref{dichothm1}}
 and $V$ be a potential such that
\be
V_n - V^0_n \geq 0 \, (\text{or } \leq 0) \quad \forall n  \in \mathbb{N}  \text{ and } \ds \sum_{n}^{\infty} \left| (V_n - V^0_n)(\phip_n \phim_n) \right| < \infty.
\ee Moreover, we require that $\sup_n |( \phim_n)^2 \Sigma_n| < \infty$.

If $\ds \lim_{n \to \infty} |\Sigma_n |= \infty$, then exactly one of the following must be true: as $n \to \infty$,
\begin{enumerate}
\item $a^+_n = \Pi^+_n(a^+_\infty + o(1) ) \neq 0$ and $a^-_n = \Sigma_{n-1} (a^+_\infty + o(1))$;
\item $a^+_n \to 0$ and $a^-_n \to a^-_\infty \neq 0$.
\end{enumerate}
If $\ds \sup_n |\Sigma_n| < \infty$, the reader may refer to Theorem~{\rm\ref{dichothm1}}. 
\end{theorem}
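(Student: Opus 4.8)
\medskip

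The plan is to renormalise the recurrence~\eqref{e9} by the lower-triangular product $G_nG_{n-1}\cdots G_1$ of~\eqref{Gndef}, whose closed form is $P_n:=\bpm \Pip_n & 0\\ \Sigma_n & \Pim_n\epm$ (with $P_0:=I$, so that $P_n=G_nP_{n-1}$), and then to run an argument parallel to that of Theorem~{\rm\ref{dichothm1}}, the only genuinely new feature being that $\Sigma_n$ is now unbounded. Set ${\bf d}_n:=P_{n-1}^{-1}{\bf a}_n$. Since $\det G_j=1-(\beta_j\phip_j\phim_j)^2$ is bounded away from $0$ under Assumptions~\ref{assump1} (so $P_{n-1}$, whose determinant is $\Pip_{n-1}\Pim_{n-1}$, is uniformly invertible) while $\det(I+M_j)=1$ (as observed in the proof of Proposition~{\rm\ref{anonzero}}), the vector ${\bf d}_n$ vanishes for one $n$ precisely when $\psi\equiv 0$. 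Using $I+M_n=G_n+E_n$ one obtains the perturbed identity recurrence
\[
{\bf d}_{n+1}=(I+F_n){\bf d}_n,\qquad F_n:=P_n^{-1}E_nP_{n-1},
\]
and a direct computation with the explicit $E_n$ of~\eqref{decomposition1}, together with $\Pip_{n-1}d^+_n=a^+_n$ and $\Sigma_{n-1}d^+_n+\Pim_{n-1}d^-_n=a^-_n$, gives
\[
(F_n{\bf d}_n)^+=\frac{\beta_n(\phim_n)^2}{\Pip_n}\,a^-_n,
\qquad
(F_n{\bf d}_n)^-=-\,\frac{\beta_n(\phim_n)^2\,\Sigma_n}{\Pip_n\Pim_n}\,a^-_n .
\]
Here I use, exactly as in Theorem~{\rm\ref{dichothm1}}, that under Assumptions~\ref{assump1} the products $\Pip_n$ and $\Pim_n$ converge to nonzero limits and are therefore bounded above and below.

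The heart of the matter is to show that ${\bf d}_n$ converges; equivalently, after summing the recurrence, that $\sum_n F_n{\bf d}_n$ converges. The two scalar series to be controlled are $\sum_n \beta_n(\phim_n)^2 a^-_n$ and $\sum_n \beta_n(\phim_n)^2\Sigma_n a^-_n$, and since $a^-_n$ is a priori of size $\asymp|\Sigma_{n-1}|$ these are not absolutely convergent: the crude estimate $|F_n{\bf d}_n|\le\|F_n\|\,\|{\bf d}_n\|$ is useless, because $\|F_n\|$ carries the unsummable factor $\Sigma_n\Sigma_{n-1}$. This is where the two hypotheses special to this theorem do their work. The bound $\sup_n|(\phim_n)^2\Sigma_n|<\infty$ permits one full power of $\Sigma_n$ to be traded against $(\phim_n)^{-2}$; and the one-sign assumption $V_n-V^0_n\ge 0$ (or $\le 0$) forces each term $-\beta_n(\phip_n)^2\Pip_{n-1}$ in the recursion~\eqref{Sigmandef} to keep a fixed sign, so that $|\Sigma_n|$ is, up to a bounded factor, nondecreasing, $|\Sigma_{n-1}|\asymp|\Sigma_n|$, and~\eqref{Sigmandef} lets one telescope the dangerous sums against the increments of $\Sigma$ rather than against $|\Sigma_n|$ itself. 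Combining these with $\sum_n|\beta_n\phip_n\phim_n|<\infty$, with $|\phim_n|\le|\phip_n|$ for large $n$ (valid because $|\phip_n|$ is nondecreasing and $\phim_n\to 0$), and with the alternative hypothesis carried over from Theorem~{\rm\ref{dichothm1}} (that $\sum_n(\phim_n)^2<\infty$ or $\beta_n\in\ell^1$), one bounds the tails of both series and concludes ${\bf d}_n\to{\bf d}_\infty=:(f_\infty,g_\infty)$, which is nonzero by a tail-contraction estimate together with $\psi\not\equiv 0$ and the invertibility of the $I+M_j$. I expect this convergence estimate to be the main obstacle.

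It remains to read off the dichotomy from the value of $f_\infty$, by undoing the renormalisation: $a^+_n=\Pip_{n-1}d^+_n$ and $a^-_n=\Sigma_{n-1}d^+_n+\Pim_{n-1}d^-_n$, with $d^\pm_n$ convergent. If $f_\infty\ne 0$, then $a^+_n=\Pip_{n-1}d^+_n=\Pip_n(f_\infty+o(1))\ne 0$ for large $n$ (using $\Pip_{n-1}/\Pip_n=1/p^+_n\to 1$), while $\Pim_{n-1}d^-_n$ stays bounded and $\Sigma_{n-1}\to\infty$, whence $a^-_n=\Sigma_{n-1}(f_\infty+o(1))$; this is alternative~1 with $a^+_\infty=f_\infty$. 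If $f_\infty=0$, then $a^+_n=\Pip_{n-1}d^+_n\to 0$, and it remains to see that $a^-_n$ tends to a nonzero limit. Since $d^+_k\to 0$, the estimate $|d^+_n|\le\sum_{k\ge n}|(F_k{\bf d}_k)^+|\le C\sum_{k\ge n}|\beta_k|(\phim_k)^2|a^-_k|$ together with $|a^-_k|\le|\Sigma_{k-1}|\,|d^+_k|+C$ and $|\Sigma_{k-1}|\le K(\phim_{k-1})^{-2}$ sets up a bootstrap which, using $\sum_n|\beta_n\phip_n\phim_n|<\infty$ and the comparison of consecutive values of $|\phim_n|$, shows that $|d^+_n|$ decays fast enough that $\Sigma_{n-1}d^+_n\to 0$; hence $a^-_n\to\Pim_\infty g_\infty$, and this limit is nonzero since otherwise ${\bf d}_\infty={\bf 0}$ and $\psi\equiv 0$. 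The two alternatives are mutually exclusive ($a^+_n$ tending to a nonzero number versus $a^+_n\to 0$) and exhaustive ($f_\infty\ne 0$ or $f_\infty=0$), giving the ``exactly one'' of the statement; the complementary case $\sup_n|\Sigma_n|<\infty$ is already handled by Theorem~{\rm\ref{dichothm1}}. The bootstrap in the case $f_\infty=0$, where the rate comparison between the decay of $d^+_n$ and the growth of $|\Sigma_{n-1}|$ must be done carefully, is a close second to the convergence estimate in terms of difficulty.
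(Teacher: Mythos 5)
Your change of variables ${\bf d}_n=(d^+_n,d^-_n)$ is exactly the paper's substitution: it is the pair $f^\pm_n$ of \eqref{eq2p2}, and your formulas for $(F_n{\bf d}_n)^\pm$ agree with the increment identities \eqref{fnpineq1} and \eqref{fnmineq1}. The gap is the step you yourself single out as the heart of the matter: the claim that ${\bf d}_n$ converges, equivalently that $\sum_n \beta_n(\phim_n)^2\Sigma_n a^-_n$ converges. Under the hypotheses of the theorem this is false in general. Take $V^0$ constant, so $\phim_n=x^n$, $\phip_n=x^{-n}$ with $0<x<1$, and let $\beta_n=n^{-2}$ with one sign. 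Then $\beta_n\phip_n\phim_n=n^{-2}\in\ell^1$, $|\Sigma_n|\asymp n^{-2}x^{-2n}\to\infty$ by \eqref{Sigmandef}, and $(\phim_n)^2|\Sigma_n|\asymp n^{-2}$ is bounded, so every assumption holds; but for a generic solution (the first alternative, where $a^-_n\asymp\Sigma_{n-1}$) the increments $|d^-_{n+1}-d^-_n|\asymp\beta_n(\phim_n)^2|\Sigma_n||\Sigma_{n-1}|\asymp x^{-2n}n^{-6}$ blow up and eventually all have the same sign, so $d^-_n$ diverges --- it is $o(\Sigma_{n-1})$, which is all the conclusion needs, but it has no finite limit. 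Trading one power of $\Sigma$ for $(\phim_n)^{-2}$ only reduces the terms to $\beta_n|\Sigma_{n-1}|$, still unsummable, and no telescoping against the increments of $\Sigma$ can rescue a series that genuinely diverges: the quantity you are trying to control is the wrong invariant, so both the ``convergence estimate'' and the subsequent bootstrap in the case $f_\infty=0$ rest on a statement that fails.

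The paper's proof keeps $f^+_n=d^+_n$ but replaces $f^-_n$ by the rescaled quantity $\hf^-_n:=f^-_n/\Sigma_{n-2}$ and the ratio $r_n$ by $\hat r_n:=\hf^-_n/f^+_n$; in the analogue of \eqref{reqn} one estimates $\bigl|\hat r_{n+1}-(\Sigma_{n-2}/\Sigma_{n-1})\hat r_n\bigr|$ and uses $|\Sigma_{n-1}/\Sigma_n|\le 1$. That quasi-monotonicity of $|\Sigma_n|$ is precisely where the one-sign hypothesis $V_n-V^0_n\ge 0$ (or $\le 0$) enters, and the hypothesis $\sup_n|(\phim_n)^2\Sigma_n|<\infty$ is what makes the rescaled increments summable, since in the hatted variables each dangerous factor $\Sigma$ is paired with a $(\phim_n)^2$. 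With this normalization the dichotomy argument of Theorem \ref{dichothm1} runs essentially verbatim and delivers the two alternatives. Your final ``reading off'' of the dichotomy would indeed work under the weaker, correct statement $d^-_n=o(\Sigma_{n-1})$ in the first alternative (convergence of $d^-_n$ is not needed), but as written your argument establishes neither that bound nor the convergence it is built on; redoing your estimates for $\hf^-_n$ and $\hat r_n$ instead of $f^-_n$ and $r_n$ is the missing ingredient.
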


\begin{proof}[Proof of Theorem \ref{dichothm2a}]
Let $f^+_{n+1}$ and $f^-_{n+1}$ be defined implicitly in \eqref{eq2p2} below:
\be
\bpm a^+_{n+1} \\ a^-_{n+1} \epm = 
\bpm \Pi^+_n & 0 \\ \Sigma_n & \Pi^-_n
\epm \bpm f^+_{n+1} \\ f^-_{n+1} \epm 
\label{eq2p2} .
\ee 

First, we want to prove that
\be
\ds \sup_n |f^+_n| + |f^-_n| < \infty .
\label{fnsumbounded} \ee

Then we prove that exactly one of the following must be true:
\begin{enumerate}
\item $\ds \lim_{n \to \infty} f^+_n {= :} f^+_\infty \neq 0$ and $\ds \lim_{n \to \infty} f^-_n = 0$.
\item $\ds \lim_{n \to \infty} f^+_n {= :} f^+_\infty$ exists and $\ds \lim_{n \to \infty} f^-_n \neq 0$.
\end{enumerate}

To begin, we observe that
\be 
f^+_{n+1} - f^+_n = \ds \frac{a^+_{n+1}}{\Pi_n^+} - \ds \frac{a^+_n}{\Pi_{n-1}^+} = \ds \frac{a^+_{n+1} - p^+_n a^+_n}{\Pi^+_n} = \ds \frac{\beta_n (\phim_n)^2 a^-_n}{\Pi^+_n}  = \ds \frac{\beta_n (\phim_n)^2 (\Sigma_{n-1} f^+_n + \Pi^-_{n-1} f^-_n)}{\Pi^+_n},
\label{fnpineq1}
\ee which, by the triangle inequality, implies that
\be
|f^+_{n+1}| \leq |f^+_n| \left( 1+ \ds \frac{\beta_n (\phim_n)^2 \Sigma_{n-1}}{\Pi^+_n} \right) + |f^-_n| \left| \ds \frac{\beta_n (\phim_n)^2 \Pi^-_{n-1}}{\Pi^+_n} \right| .
\label{fnpineq}
\ee

\be
f^-_{n+1} - f^-_n = \ds \frac{a^-_{n+1} - \Sigma_n f^+_{n+1}}{\Pi^-_n} -  \ds \frac{a^-_{n} - \Sigma_{n-1} f^+_{n}}{\Pi^-_{n-1}} = \ds \frac{a^-_{n+1} - p^-_n a^-_n -\Sigma_n f^+_{n+1} + p^-_n \Sigma_{n-1} f^+_n}{\Pi^-_n}.
\label{fn11}
\ee
By \eqref{e9},
\be
a^-_{n+1} - p^-_n a^-_n = - \beta_n (\phip_n)^2 a^+_n = -\beta_n (\phip_n)^2 \Pi^+_{n-1} f^+_n  = (\Sigma_n - p^-_n \Sigma_{n-1}) f^+_n .
\ee Thus, \eqref{fn11} becomes
\be
f^-_{n+1} - f^-_n = \ds \frac{\Sigma_n (f^+_n - f^+_{n+1})}{\Pi^-_n} = \ds \frac{\beta_n (\phim_n)^2 \Sigma_n ( \Sigma_{n-1} f^+_n + \Pi^-_{n-1} f^-_n)}{\Pi^-_n },
\label{fnmineq1}
\ee which implies that
\be
|f^-_{n+1}| \leq \left| f^-_{n} \right| \left(1 + \left|\ds \frac{\beta_n (\phim_n)^2 \Sigma_n}{p^-_n} \right| \right) + |f^+_n| \left| \ds \frac{\beta_n (\phim_n)^2 \Sigma_n \Sigma_{n-1}}{\Pi^-_n} \right| .
\label{fnmineq}
\ee
Add \eqref{fnpineq} to \eqref{fnmineq}. Since $\sum_{n=1}^\infty |\beta_n \phip_n \phim_n| <\infty$, $\Pi^\pm_n$ converges to a non-zero limit and $p^\pm_n \to 1$. Moreover, $\sup_n |\Sigma_n| < \infty$, so there is a constant $K$ such that
\be
|f^+_{n+1}| + |f^-_{n+1}| \leq (1 + K |\beta_n (\phim_n)^2| ) \left( |f^+_n| + |f^-_n| \right) ,
\ee which implies \eqref{fnsumbounded}.

\begin{dichotomy} There are only two mutually exclusive possibilities for $f^+_n$ and $f^-_n$:
\begin{enumerate}
\item For any pair consisting of an integer $N$ and a constant $M > 0$, there exists an integer $p=p(N,M) > N$ such that
\be
M \left| f^+_{p} \right| >  |{ f^-_p } |.
\label{case1}
\ee
\item There exist an integer $N_0$ and a constant $M_0$ such that
\be
\left| f^+_n \right| \leq M_0 |{f^-_n}|\quad \forall n \geq N_0 .
\label{case2}
\ee 
\end{enumerate}
\end{dichotomy}

Suppose we are in Case 1. Note that \eqref{case1} implies that $f^+_p \neq 0$, because if $f^+_p =0$, then $f^-_p = 0$ which implies $a^+_p = a^-_p = 0$, and hence $a^\pm_n \equiv 0$. This contradicts the assumption that $\psi$ is a non-trivial solution.

Let $p$ be the integer given in \eqref{case1}. We shall specify the choice of $N$ and $M$ later in the proof.

 Let
\be
r_n = \ds \frac{f^-_n}{f^+_n} .
\label{rndef}
\ee 

Note that by the triangle inequality and \eqref{fnpineq1},
\be
\left| \ds \frac{f^+_{p+1}}{f^+_p} \right| \geq 1 - \left| \ds \frac{f^+_{p+1} - f^+_p}{f^+_p} \right| \geq 1 - K_1 |\beta_p (\phim_p)^2| (1 + |r_p| )> 0 .
\label{fpp1}
\ee Thus, $f^+_p \neq 0$ implies $f^+_{p+1} \neq 0$. Furthermore, by inverting \eqref{fpp1}, we obtain
\be
\left| \ds \frac{f^+_p}{f^+_{p+1}} \right| \leq \ds \frac{1}{1 - K_1 |\beta_p (\phim_p)^2 (1+|r_p|)|} < K_2.
\ee

Clearly, both $r_p$ and $r_{p+1}$ are well defined as $f^+_p, f^+_{p+1} \neq 0$. Observe that by the triangle inequality,
\be
|r_{p+1} - r_p | \leq \left| \ds \frac{f^-_{p+1} - f^-_p}{f^+_p} \right| \left| \ds \frac{f^+_p}{f^+_{p+1}} \right| + \left|  \ds \frac{f^+_{p+1} - f^+_p }{f^+_p} \right| \left|\ds \frac{f^-_p}{f^+_p} \right| \left| \frac{f^+_p}{ f^+_{p+1}} \right| .
\label{reqn}
\ee

By by \eqref{fnmineq1}, 
\be
\left| \ds \frac{f^-_{p+1} - f^-_p}{f^+_p} \right| \leq \left| \ds \frac{\beta_p (\phim_p)^2 \Sigma_p}{\Pi^+_p} \right| \left( |\Sigma_{p-1}| +|r_p| |\Pi^-_{p-1} | \right) \leq (1+|r_p|) K_3 |\beta_p (\phim_p)^2| .
\label{fmbound1}
\ee

Similarly, by \eqref{fnpineq1},
\be
\left| \ds \frac{f^+_{p+1} - f^+_p}{f^+_p} \right| \leq   \left| \ds \frac{\beta_p (\phim_p)^2}{\Pi^+_p} \right| \left( |\Sigma_{p-1}| + {|r_p| |\Pi^-_{p-1}|} \right) \leq K_4  |\beta_p (\phim_p)^2| (1+|r_p| ) .
\label{fpbound1}
\ee

By the triangle inequality,
\begin{multline}
|r_{p+1} | \leq |r_{p+1} - r_p| + |r_p| \leq |r_p| \left[1 + \frac{K_5(1+|r_p|) |\beta_p (\phim_p)^2| }{1 - K_1 |\beta_p (\phim_p)^2 (1+|r_p|)|}\right] + \ds \frac{ K_5 |\beta_p (\phim_p)^2|}{1 - K_1 |\beta_p (\phim_p)^2 (1+|r_p|)|} \\ \\ = |r_p| + \frac{(1+|r_p| + |r_p|^2) K_5 |\beta_p (\phim_p)^2| }{1 - K_1 |\beta_p (\phim_p)^2 (1+|r_p|)|} .
\label{rpbound1}
\end{multline}
 In particular, if $|r_p| <1$, then $1+ |r_p| + |r_p|^2 \leq 1 + 2 |r_p|$. Besides, when $p$ is large (which will be the case),
 \be
 1 - K_1 |\beta_p (\phim_p)^2 (1+|r_p|)| > 1 - 2 K_1 |\beta_p (\phim_p)^2| > 1/2.
 \ee Hence, \eqref{rpbound1} becomes
\be
|r_{p+1}| \leq |r_p| + (1 + 2 |r_p|) K_6 |\beta_p (\phim_p)^2| = |r_p| \left(1+ 2 K_6 |\beta_p(\phim_p)^2|  \right) + K_6 |\beta_p(\phim_p)^2| .\label{rpbound2}
\ee

Hence, by an inductive argument we can prove that
\be
|r_{p+k}| \leq \left( \ds \prod_{j=0}^{k-1} \left( 1 + \eta_{p+j} \right)  \right) \left(  |r_{p}|  +  \ds \sum_{j=0}^{k-1} \eta_{p+j} \right) ,
\label{rpk}
\ee where
\be
\eta_k := 2 K_6 |\beta_k (\phim_k)^2| \geq 0.
\label{ekdef} 
\ee 

Since either $\beta_k$ or $|\phim_k|^2$ are summable, $\eta_k \in \ell^1$. Hence, 
\be
P_p: =\ds \prod_{j=0}^{\infty} \left( 1 + \eta_{p+j} \right)  < \ds \prod_{j=0}^{\infty} \left( 1 + \eta_{j} \right) := P_\infty < \infty 
\label{Ppdef}
\ee
and
\be
S_p : = \ds \sum_{j=0}^{k-1} \eta_{p+j} < K_7 \ds \sum_{j=0}^{\infty} \eta_{p+j} \to 0 \text{ as } p \to \infty .
 \label{Spdef}
\ee
Here is how we choose $M$ and $N$: given $\epsilon > 0$, choose $M, N$ such that
\be
|r_p| < M < \ds \frac{\epsilon}{2 P_\infty} \text{ and } \ds \sup_{k \geq N} S_k < \ds \frac{\epsilon}{2 P_\infty} .
\ee
It is guaranteed that there exists an integer $p > N$ such that $|r_p| < M$. By \eqref{rpk},
\be
|r_{p+k}| \leq P_\infty \left( |r_p| + S_p \right) < P_\infty \left( \ds \frac{\epsilon}{2 P_\infty} + \ds \frac{\epsilon}{2 P_\infty} \right) < \epsilon.
\ee
In other words, if \eqref{case1} is true, then $\ds \lim_{n \to \infty} r_n = 0$. Apply this to \eqref{fpbound1}, we get
\be
\left| \ds \frac{f^+_{n+1}}{f^+_n} - 1 \right| \leq K_7 |\beta_n (\phim_n)^2| \in \ell^1.
\label{fpbound2}
\ee 
Since $\log z$ is analytic near $z=1$, in a neighborhood of $1$ there is a 
constant $K_8$, 
arbitrarily close to 1,
such that
\be
|\log z | = |\log z - \log 1| \leq K_8 |z-1|.
\ee
Put $z=f^+_{n+1}/f^+_n$. By \eqref{fpbound2},
\be
\left| \log \ds \frac{f^+_{n+1}}{f^+_n} \right| = O \left( |\beta_n (\phim_n)^2| \right) \in \ell^1. 
\ee Moreover, by the argument following \eqref{fpp1}, we know that there exists an integer $p$ such that $f^+_n \not = 0$ for all $n \geq p$. Therefore,
\be
\log f^+_{n} = \ds \sum_{j=p}^{n-1} \left( \log \ds \frac{ f^+_{j+1} }{ f^+_j }\right) + \log f^+_p .
\ee
That implies the existence of $\lim_{n \to \infty} \log f^+_n$, and
\be
\lim_{n \to \infty}  f^+_n : = f^+_\infty \neq 0 .
\label{fpinfetydef}
\ee Together with the proven fact that $r_n \to 0$, we obtain
\be
\ds \lim_{n \to \infty} f^-_n = 0 .
\ee

Next, suppose \eqref{case2} is true. If $f^-_n \equiv 0$, then $f^+_n \equiv 0$ for all $n \geq N_0$, which contradicts with the assumption that $\psi$ is a non-trivial solution.

Now suppose there is an $m \geq N_0$ such that $f^-_m \neq 0$. Divide both sides of \eqref{fnmineq1} by $f^-_m$. Then we obtain:
\be
\left| \ds \frac{f^-_{m+1}}{f^-_m} - 1\right| \leq K_9 |\beta_m (\phim_m)^2| .
\ee

Using the same argument as in Case 1, we can prove that $f^-_m \neq 0$ implies $f^-_{m+1} \neq 0$, which allows us to apply the same logarithmic argument to prove that
\be
\ds \lim_{n \to \infty} f^-_n := f^-_\infty \neq 0.
\label{fminftydef} 
\ee

By \eqref{fnpineq1},
\be
f^+_{n+1} =  \left( 1 + \beta_n (\phim_n)^2 \ds \frac{\Sigma_{n-1}}{\Pi^+_n} \right)f^+_n  + \left(  \beta_n (\phim_n)^2 \ds \frac{\Pi^-_{n-1}}{\Pi^+_n} \right) f^-_n = (1+O(|\beta_n (\phim_n)^2|) ) f^+_n + O(|\beta_n (\phim_n)^2|) .
\ee Hence,
\be
\ds \lim_{n \to \infty} f^+_n = f^+_\infty \text{ exists. }
\ee 
\end{proof}

\begin{proof}[Proof of Theorem~{\rm\ref{dichothm2}}]  The proof is very similar to the one of Theorem \ref{dichothm1}. The only difference in the proof is that instead of $f^-_n$ and $r_n$ we consider
\be
\hf^-_n:= \ds \frac{f^-_n}{\Sigma_{n-2}} \text{ and } \hat{r}_n = \ds \frac{\hf^-_n}{f^+_n}.
\ee 

In place of $|r_{n+1} - r_n|$ in \eqref{reqn}, we replace it with
\be
\left|\hat{r}_{n+1} -\ds \frac{ \Sigma_{n-2}}{\Sigma_{n-1}} \hat{r}_n \right|
\ee and make use of the fact that $|\Sigma_{n-1}/\Sigma_n| \leq 1$ for all $n$.
\end{proof}

\section{Construction of Comparison Equations}\label{WKB}

In this section we turn to the problem of determining the asymptotic behavior of 
solutions of \eqref{one*} as $n \to \infty$ given a potential $V_n$, where $V_n$ can be either bounded or unbounded.  We shall
construct explicit comparison equations with respect to which we can call upon
the perturbation results of the earlier sections of this article.  
The construction will require a discrete
replacement for the Liouville-Green (familiarly, WKB) approximation, which is a well-known and
quite useful tool for this purpose in the setting of ordinary differential equations \cite{Hart,Olv}.

Our ansatz is that given an equation of the type \eqref{one*}, a related equation is to be sought for which
the solutions are of the form
\begin{equation}\label{prodansatz}
\phi^\pm_n=z_n\prod^n_{\ell=1}S^{\pm 1}_\ell.
\end{equation}
Recall that in the Liouville-Green approximation to 
ordinary differential equations of Schr\"odinger type a comparison is made to a similar equation having a solution basis in the form
$$
V(x)^{-1/4} \exp\left(\pm \int{V(x)^{1/2} dx} \right)
$$
\cite{Olv}.  In common with previous authors, we replace the 
exponential function containing an ``action integral'' by a product
of the quantities we designate $S_n$, but we innovate with
an additional prefactor $z_n$,
to be specified below in \eqref{zdefn}.
This is designed
to bring simplifications in the discrete case 
analogous to those resulting from the prefactor $V(x)^{-1/4}$ in the continuous case.

Before we state the main results of this section, we pause to point out a connection between the factor $z_n$ and the Green matrix for the 
Schr\"odinger operator $- \Delta + \widetilde V$, {\it viz.}, for $n > m$, 
\begin{equation}\label{Greenfnfla}
G_{nm}=\phi_{\min(m,n)}^+ \phi_{\max(m,n)}^- = z_n z_m\prod^n_{\ell=m+1} \frac{1}{S_\ell},
\end{equation}
for which $(- \Delta + \widetilde V) G$ is the identity operator,
by a direct computation.  In case $n=m$, 
\begin{equation}\label{DaHaZ}
G_{nn}=z^2_n.
\end{equation}

In the following section we study the diagonal elements
of the Green matrix and show that they are directly related to the behavior at infinity 
of solutions and to the notion of an Agmon metric.
(cf.\ \cite[Section~4]{DaHa}).  

In order to determine $z_n$, we recall the constancy of the Wronskian of solutions to equations of the type
\eqref{one*}. To simplify the discussion, we take $W=1$, which can be arranged by scaling. Given our assumptions it implies that
\begin{equation}\label{zstuff}
z_nz_{n+1}\left(S_{n+1}-\frac1{S_{n+1}}\right)=1.
\end{equation}

Guided by the case of a constant potential, we expect that if $V_n$ is well-behaved,
then a good choice for $S_n$ is one of the solutions of
$S_n+S_n^{-1}=V_n+2$.
This turns out to be adequate in some bounded cases, but a more sophisticated choice is necessary
when, for example, $V_n$ is allowed to be unbounded.  
We remark that the choice is not unique, because different choices lead to the same asymptotic behavior 
if the comparison potentials they lead to are sufficiently close.
Our discussion will proceed under the supposition that 
$V_n > 0$ for large $n$; 
the case where 
$V_n < -4$ for large $n$ is similar with the systematic sign changes mentioned
in Remark \ref{4 vs 0}.

To determine the best choices for $S_n$ and $z_n$, we consider the equation
\be\label{Sdefn}
S_n + \ds \frac 1 {S_n} = b_n,
\ee
which is effectively a quadratic,
and let $S_n$ be the root of larger magnitude, i.e.,
\be \label{Sfla}
S_n = \ds \frac{b_n + \sqrt{b_n^2 - 4} }{2} , \quad \text{where } |b_n| \geq  2.
\ee

We observe that the relationship
\begin{equation}\label{S diff}
S_n-\frac1{S_n}=\sqrt{b_n^2 - 4}
\end{equation}
necessarily follows.

To be consistent with the Wronski identity \eqref{zstuff} we must set
\begin{equation}\label{zdefn}
z_n:= C_z^{(-1)^n} \sqrt{\ds \frac{(b_{n-1}^2 - 4) (b_{n-3}^2 - 4) \cdots}{(b_n^2 - 4) (b_{n-2}^2 - 4) \cdots} }
\end{equation}
for all $n$, where the constant $C_z$ will be chosen below.
(We clarify that the prefactor simply alternates between $C_z$ and its reciprocal, depending on whether $n$ is even or odd.)

The comparison functions $\phi^{\pm}$ both solve a Schr\"odinger equation with 
potential $\widetilde{V}_n$ given by
\begin{equation}
\widetilde V_n := \frac{\Delta \phi^\pm_n}{\phi^\pm_n}=
\frac{z_{n+1}}{z_n}\, S_{n+1}
+\frac{z_{n-1}}{z_n}\, \frac{1}{S_n} - 2.
\label{wtV}
\end{equation}
(This equation is true by direct substitution for $\phi^+$; to see that is it also true for 
$\phi^-$ requires also substituting from \eqref{zstuff}; cf. a similar argument 
for  \eqref{phiminusstuff} in \S \ref{WKB}.)  We shall in fact show that there is a choice of ways to choose $b_n$ that will lead to a sufficient convergence rate 
of $\widetilde{V}_n - V_n$, 
and that the 
the logarithm of the quantity $S_n$ can be regarded as an Agmon metric
\cite{Agm,HiSi}
controlling the behavior of solutions
$\phi$ of \eqref{one*} at infinity.

For clarity, we first consider the case where $V_n$ is bounded and $V_n \ge C > 0$ for all $n \ge N_0$.  Without loss of generality 
we may assume that $N_0 = 1$, because this does not affect the large-$n$ behavior of a solution basis.  (This simply allows us to avoid choosing phases for some square-roots of quantities that might otherwise not be positive.)

In the case of bounded, slowly varying potentials $V_n$, Theorem \ref{Snestimates bdd} contains estimates for $S_n$ and $z_n$ and uses them to control the  solutions and Green matrix of the comparison equation $(- \Delta + \widetilde{V}) \phi = 0$.  The construction in Theorem \ref{Snestimates bdd} is guided by the special case of a constant potential.

In Theorem \ref{Snestimates2} that follows, we shall present a more general result which covers potentials that are convergent to a finite limit under more relaxed assumptions on $V_n$.

Finally, in Theorem \ref{Snestimates1} we show that the method proposed in this section also works for unbounded potentials that possibly fluctuate.

\begin{theorem}(bounded and slowly varying potential) \label{Snestimates bdd} Suppose that for some $C > 0$, $C \le V_n \in \ell^\infty$,
and that 
$
n(V_{n+1} - V_n) \in \ell^1$. Choose
\be
b_n = b_n^{bdd} := V_n + 2.
\ee
This implies (with a short calculation) that 
\begin{equation}\label{Sdefbdd}
S_n = S_n^{bdd} := \frac 1 2 \left(V_n + 2 + \sqrt{V_n(V_n + 4)}\right).
\end{equation}
The factor $z_n$ is determined by \eqref{zdefn}.  
Then
\item{(a)} 
\be
S_{n+1}^{bdd} - S_n^{bdd} \in \ell^1.
\ee
\item{(b)} $V_n$ converges to a nonzero limit $V_\infty$ as $n \to \infty$, and 
\begin{equation}\label{Czdefn}
C_z := \left(V_\infty (V_\infty + 4)\right)^{-1/4} \prod_{m=1}^{\infty}\sqrt{\frac{V_{2m}(V_{2m}+4)}
{V_{2m-1}(V_{2m-1}+4)}}
\end{equation}
is well defined through a finite convergent product.
\item{(c)} Under this definition of $C_z$ and the one of $z_n$ in \eqref{zdefn}, $z_n z_{n+1} = 1/ \sqrt{V_n(V_n+4)}$ and
\be \label{znlimit}
z_{m} -  \frac{1}{\left[V_\infty (V_\infty+4) \right]^{1/4}} \in \l^1.
\ee 
\item{(d)} The comparison potential ${\wtV}_n$ defined in \eqref{wtV}, satisfies 
$\lim_{n \to \infty}{{\wtV}_n} = V_\infty$, and the Green matrix for $- \Delta + \widetilde{V}$ and the product
$\phi_n^+ \phi_n^-$ are uniformly
bounded.
\item{(e)}
$\widetilde{V}_n -V_n \in \ell^1,$ and therefore, identifying $\widetilde{V}$
with the comparison potential $V^0$ in \eqref{compar}, the solutions of
\eqref{one*} and \eqref{compar} are asymptotically equivalent in the
sense of Theorem \ref{Banconv}.
\end{theorem}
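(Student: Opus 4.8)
The plan is to establish the five claims (a)--(e) in the order stated, each feeding into the next, with (e) --- that $\widetilde V_n - V_n \in \ell^1$, hence that \eqref{one*} and \eqref{compar} are asymptotically equivalent --- as the target. Throughout, all computations take place on the compact interval $[C,\|V\|_\infty]$, staying uniformly away from the branch point $v=0$ of $v\mapsto\sqrt{v(v+4)}$ because $V_n\ge C>0$, which is what lets every square root below be a positive real number. For (a): the map $v\mapsto\tfrac12\bigl(v+2+\sqrt{v(v+4)}\bigr)$ is $C^\infty$ with bounded derivative on $[C,\|V\|_\infty]$, hence Lipschitz there, so $|S_{n+1}^{bdd}-S_n^{bdd}|\le L\,|V_{n+1}-V_n|$, and $n(V_{n+1}-V_n)\in\ell^1$ trivially forces $V_{n+1}-V_n\in\ell^1$. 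For (b): the same summability makes $(V_n)$ Cauchy, so $V_n\to V_\infty$ with $V_\infty\ge C>0$; and since $g(v):=v(v+4)$ is Lipschitz on $[C,\|V\|_\infty]$ and bounded below by $C(C+4)>0$, the $m$-th factor of \eqref{Czdefn} is $1+O(|V_{2m}-V_{2m-1}|)$ with summable error, so the infinite product converges absolutely to a finite nonzero value, making $C_z$, and hence every $z_n$, a well-defined positive number.

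For (c): the identity for $z_nz_{n+1}$ follows at once from the Wronski relation \eqref{zstuff} together with \eqref{S diff}. For the limit, group the factors of \eqref{zdefn} in consecutive pairs, so that e.g.\ for $n$ even $z_n^2=C_z^2\prod_{k=1}^{n/2}\frac{b_{2k-1}^2-4}{b_{2k}^2-4}$ with $b_j^2-4=V_j(V_j+4)$, and similarly for $n$ odd; the precise form of $C_z$ in \eqref{Czdefn} is exactly the normalization (essentially the unique one) for which the a priori distinct even- and odd-$n$ limits of $z_n^2$ agree and equal $(V_\infty(V_\infty+4))^{-1/2}$, equivalently $z_n\to z_\infty:=(V_\infty(V_\infty+4))^{-1/4}$, the discrete counterpart of the $V(x)^{-1/4}$ prefactor of continuous Liouville--Green. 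To upgrade convergence to $z_n-z_\infty\in\ell^1$, bound the tail of the pair-product: for $n$ large, $|z_n^2-z_\infty^2|\le C\sum_{2k\ge n}|V_{2k}-V_{2k-1}|$, and summing over $n$ replaces the right side by a constant multiple of $\sum_n n|V_{n+1}-V_n|<\infty$. Since $z_\infty>0$ and $z_n>0$, $z_n^2-z_\infty^2\in\ell^1$ gives $z_n-z_\infty\in\ell^1$.

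For (d)--(e): from $S_n\ge\tfrac12(C+2)>1$ one gets a uniform bound $1/S_\ell\le q<1$, so $\phi_n^-=z_n\prod_{\ell\le n}S_\ell^{-1}$ decays geometrically, $\phi_n^+=z_n\prod_{\ell\le n}S_\ell$ is eventually monotone increasing (since $\phi_{n+1}^+/\phi_n^+=(z_{n+1}/z_n)S_{n+1}\to S_\infty>1$), and $\phi_n^+\phi_n^-=z_n^2$ is bounded above and below; feeding $1/S_\ell\le q$ into \eqref{Greenfnfla} and \eqref{DaHaZ} shows the Green matrix decays exponentially off the diagonal and is therefore uniformly bounded, while letting $n\to\infty$ in \eqref{wtV}, using $z_{n\pm1}/z_n\to1$ and $S_n\to S_\infty$, gives $\widetilde V_n\to S_\infty+S_\infty^{-1}-2=V_\infty$, proving (d). For (e), write $V_n=S_n+S_n^{-1}-2$ and subtract from \eqref{wtV} to obtain
\[
\widetilde V_n-V_n=\frac{z_{n+1}}{z_n}\bigl(S_{n+1}-S_n\bigr)+\Bigl(\frac{z_{n+1}}{z_n}-1\Bigr)S_n+\Bigl(\frac{z_{n-1}}{z_n}-1\Bigr)\frac{1}{S_n};
\]
the first summand is $\ell^1$ by (a) and boundedness of $z_{n+1}/z_n$, and the other two are $\ell^1$ because $z_{n\pm1}/z_n-1=O(|z_{n\pm1}-z_\infty|+|z_n-z_\infty|)\in\ell^1$ by (c), while $S_n$ and $1/S_n$ are bounded. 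Thus $\widetilde V_n-V_n\in\ell^1$; identifying $V^0=\widetilde V$ and recalling $W=1$, we get $\beta_n=V_n-\widetilde V_n\in\ell^1$, and since $\phi_n^+\phi_n^-=z_n^2$ is bounded while $\phi_n^-\to0$ and $|\phi_n^+|$ is eventually monotone, the Remark following Theorem~\ref{Banconv} shows Theorem~\ref{Banconv} applies, which is the claimed asymptotic equivalence.

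The step I expect to be the main obstacle is the $\ell^1$ refinement in part (c): not merely convergence of $z_n$, but summability of its deviations from $z_\infty$. This forces one to control the tail of an infinite product carrying an extra factor of the index $n$, and it is the only place where the full strength of the hypothesis $n(V_{n+1}-V_n)\in\ell^1$ is used, rather than just $V_{n+1}-V_n\in\ell^1$. A related subtlety is that the defining product \eqref{zdefn} has, a priori, different limits along even and odd $n$, so $C_z$ must be chosen exactly as in \eqref{Czdefn} to reconcile them; with any other constant the even/odd gap would persist, $z_{n+1}/z_n$ would fail to tend to $1$, and the middle and last terms of the displayed identity for $\widetilde V_n-V_n$ would not be $\ell^1$. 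Everything else reduces to a Lipschitz estimate, a telescoping product, and two elementary limit computations.
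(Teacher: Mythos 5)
Your proposal is correct and follows essentially the same route as the paper's proof: a Lipschitz/Taylor estimate for (a), convergence of the product with summable logarithmic factors for (b), the decomposition $\widetilde V_n - V_n$ into a term controlled by $S_{n+1}-S_n$ plus two terms controlled by $z_{n\pm1}/z_n - 1$ for (e), and for the key $\ell^1$ statement in (c) the same order-of-summation swap that converts tail sums of $|V_{k+1}-V_k|$ into $\sum_k k|V_{k+1}-V_k|<\infty$ (the paper telescopes $|z_{n+2}-z_n|$ where you bound tail products of $z_n^2-z_\infty^2$, a cosmetic difference; just note that for odd $n$ the tail must also absorb a $|V_n-V_\infty|$ term, which the same hypothesis handles). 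Your explicit verification at the end that the hypotheses of Theorem \ref{Banconv} hold is a welcome addition the paper leaves implicit.
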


\begin{proof}
The proof for (a) is a direct application of Taylor's Theorem: Following \eqref{Sfla}, we consider the function $f(x) = 1/2(x+2 + \sqrt{x(x+4)} )$, which is differentiable for all $x >0$. In particular, if $x,y>C>0$, $f(y) = f(x) + R(x,y)
$, where $R(x,y)= (y-x) (r+2)/2 \sqrt{r(r+4)} $ for some $r$ between $x,y$, implying that $R(x,y)$ is uniformly bounded in $x,y$ if $x, y >C>0$. Since $S_{n+1}^{bdd}=f(V_{n+1})$ we can write
\be
S_{n+1}^{bdd} = S_n^{bdd} + R(V_n, V_{n+1}) = S_n^{bdd} + O(|V_{n+1} - V_n|) ,
\ee which proves (a).

The fact that if the differences $V_{n+1} - V_n$ are summable, then $V_n$ has a limit is immediate.  To establish the 
convergence of the product \eqref{Czdefn}, let $\delta_m := V_{m} - V_{m-1}$. Then
\be
V_{m}(V_{m} + 4) = V_{m-1}(V_{m-1} + 4) + \delta_m (2 V_{m-1} + \delta_m + 4) 
\ee and since $0 < C \le V_n < V_n + 4$ for all $n$, 
\be
\left| \delta_m \ds \frac{2V_{m-1} + \delta_m + 4}{V_{m-1}(V_{m-1}+4)} \right| 
 = O\left( \left| \delta_m \right| \right).
\ee
As a result,
\be
\frac{V_{m}(V_{m}+4)}{V_{m-1}(V_{m-1}+4)} = 1 + O\left( \left| \ds \frac{V_{m} - V_{m-1}}{V_{m-1}}\right| \right),
\label{errorsummable}
\ee 
which implies that $\ln{\frac{V_{m}(V_{m}+4)}{V_{m-1}(V_{m-1}+4)}} \in \ell^1$.  By taking the logarithm in
\eqref{Czdefn}, the product therefore converges, and is easily seen to be nonzero.
This proves (b).

The same argument for (b) establishes the convergence as $m \to \infty$ of $z_{2m}$ and of $z_{2m+1}$, separately.  The choice of the prefactor in \eqref{Czdefn} ensures that the two limits are the same.  The more precise statement \eqref{znlimit} is
where the assumption that not only $V_{n+1} - V_n$ but also $n(V_{n+1} - V_n) \in \ell^1$ is needed.  From the definition of $z_n$ 
it can be seen (by taking logs and using Taylor's theorem) that $|z_{n+2} - z_n|$ is dominated by a constant times $|V_{n+1}-V_n| + |V_{n}-V_{n-1}|$.  Thus $\sum_m{|z_m - z_\infty|}$ is dominated by a constant times
$$
\sum_{m=1}^{\infty}{\sum_{k=m}^{\infty}{|V_{k}-V_{k-1}|}},
$$ 
which by reversing the order of summation equals
$$
{\sum_{k=1}^{\infty}{(k-1)|V_{k}-V_{k-1}|}} < \infty
$$ 
by assumption.  The other statements in (c) follow by
\eqref{Greenfnfla} and \eqref{DaHaZ}.

Finally, by \eqref{wtV},
\be
R_n := V_n-\widetilde V_n =V_n - \left(\frac{z_{n+1}S_{n+1}^{bdd}}{z_n}+
\frac{z_{n-1}}{z_n S_n^{bdd}}-2\right) = (V_n + 2) - \left(\frac{z_{n+1}S_{n+1}^{bdd}}{z_n}+
\frac{z_{n-1}}{z_n S_n^{bdd}}\right).
\ee
With the aid of \eqref{Sdefbdd},
$$
R_n = (S_{n+1} - S_n) + \left(\frac{z_{n+1}}{z_n} -1 \right)S_{n+1} + \left(\frac{z_{n-1}}{z_n} -1 \right)\frac{1}{S_n}.
$$
Since $S_n$ and $z_n$ both have finite nonzero limits and 
$S_n - \lim_{k \to \infty}{S_k}$ and $z_n - z_\infty$ are both $\ell^1$, each of these three terms is easily seen to belong to 
$\ell^1$.

\end{proof}

\begin{remark} 
The quantity $C_z$ is analogous to the exponential of an action integral in the continuous situation, which shows up in ``tunneling'' effects.
We summarize that in the case where $0 < C \le V_n $ and $V_{n+1} - V_n \in \ell^1$, there is a Liouville-Green basis of comparison 
functions for \eqref{one*}, and The perturbation method of \S \ref{VoC} lets the solutions $\{\psi_n^\pm\}$ be determined from that basis through an iteration that converges for all $n \ge N$ for some finite $N$.  To collect the details in one formula,
the Liouville-Green basis is of the explicit form
\begin{equation}\label{whole LG mess bdd}
\phi_n^\pm = \sqrt{\frac{V_{n-1}(V_{n-1}+4)V_{n-3}(V_{n-3}+4)\dots}{V_{n}(V_{n}+4)V_{n-2}(V_{n-2}+4)\dots}} 
\prod_k^n{\left(\frac{V_k + 2 + \sqrt{V_k(V_k + 4)}}{2}\right)^{\pm 1}}
\end{equation}
(dropping the normalization factors $C_z$ or, resp., $1/C_z$).
\end{remark}

The next result concerns potentials that are convergent to a finite limit under more relaxed assumptions on $V_n$.

\begin{theorem}\label{Snestimates2}(general bounded potential) Let $S_n + 1/S_n = b_n$. where $b_n$ is a bounded function of the potential $V$ such that for all $n$, $C_1 >b_n  > C_2 > 2$ for some constants $C_1, C_2$. If $\sum_n |b_{n+1} - b_n| < \infty$, then
\item{(a)} 
$$
{S_{n+1} - S_n} = O(|b_{n+1} - b_n| ).
$$ 
\item{(b)} 
\begin{equation}\label{Pzdef}
P_M := \prod_{m=1}^{M} \left(\ds \frac{b_{2m}^2-4}{b_{2m-1}^2 - 4} \right) \to P_\infty
\end{equation}
is well defined through a finite convergent product. If $b_n \neq b_{n+1}$ for all $n$, then $P_\infty \neq 0$.

\item{(c)} If $V_{n+1} - V_n \in \ell^1$, we can choose $b_n$ in a way such that $b_{n+1} - b_n \in \ell^1$ and that $\widetilde{V}_n -V_n \in \ell^1$. 
Some appropriate choice will be shown explicitly in the proof below.

\end{theorem}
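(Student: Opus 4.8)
The plan is to dispose of parts (a) and (b) just as in the proof of Theorem~\ref{Snestimates bdd}, and to concentrate the effort on the explicit choice of $b_n$ demanded in (c). For (a), write $S_n=f(b_n)$ with $f(x)=\tfrac12\bigl(x+\sqrt{x^2-4}\bigr)$; since $b_n$ lies in $[C_2,C_1]$ with $C_2>2$, the function $f$ is $C^1$ there with bounded derivative (the quantity under the root stays $\ge C_2^2-4>0$), and the mean value theorem gives $S_{n+1}-S_n=f'(\xi_n)(b_{n+1}-b_n)=O(|b_{n+1}-b_n|)$. For (b), each factor of $P_M$ equals $1+(b_{2m}-b_{2m-1})(b_{2m}+b_{2m-1})/(b_{2m-1}^2-4)$, and since $2<C_2\le b_n\le C_1$ the second summand is $O(|b_{2m}-b_{2m-1}|)$; because $\sum_m|b_{2m}-b_{2m-1}|\le\sum_n|b_{n+1}-b_n|<\infty$, the logarithms of the factors are summable, so $P_M$ converges to a limit $P_\infty$, which is positive because every factor is.

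The substance is (c). The obvious try, $b_n:=V_n+2$, is precisely the choice of Theorem~\ref{Snestimates bdd}, where the residual $\widetilde{V}_n-V_n$ could be put in $\ell^1$ only under the stronger hypothesis $n(V_{n+1}-V_n)\in\ell^1$, which was needed to control $z_n-z_\infty$ (otherwise merely a tail sum of size $O(\sum_{j\ge n}|V_{j+1}-V_j|)$). To get by with $V_{n+1}-V_n\in\ell^1$ alone I would instead prescribe the prefactor first: set
\[
z_n:=\bigl(V_n(V_n+4)\bigr)^{-1/4},\qquad b_n:=\sqrt{4+(z_{n-1}z_n)^{-2}}=\sqrt{4+\sqrt{V_{n-1}(V_{n-1}+4)\,V_n(V_n+4)}},
\]
and take $S_n:=\tfrac12\bigl(b_n+(z_{n-1}z_n)^{-1}\bigr)$, the larger root of \eqref{Sdefn}. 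One checks that this is a genuine instance of the scheme: since $V_n\ge C>0$ one has $b_n^2-4=\sqrt{V_{n-1}(V_{n-1}+4)V_n(V_n+4)}\ge C(C+4)$, so $b_n$ is a bounded function of $V$ ranging in some $[C_2,C_1]$ with $C_2>2$; $b_{n+1}-b_n\in\ell^1$, because $b_{n+1}^2-b_n^2=\sqrt{V_n(V_n+4)}\bigl(\sqrt{V_{n+1}(V_{n+1}+4)}-\sqrt{V_{n-1}(V_{n-1}+4)}\bigr)=O(|V_{n+1}-V_n|+|V_n-V_{n-1}|)$; and the triple $(b_n,S_n,z_n)$ satisfies the Wronski identity \eqref{zstuff}, so (a) and (b) apply, and a short telescoping in \eqref{zdefn} confirms that this $z_n$ is, up to the constant $C_z$, the prefactor attached to this $b_n$.

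The payoff of this choice is a clean formula for the residual. Inserting $\sqrt{b_n^2-4}=(z_{n-1}z_n)^{-1}$ and $\sqrt{b_{n+1}^2-4}=(z_nz_{n+1})^{-1}$ into \eqref{wtV}, the two terms $\pm(2z_n^2)^{-1}$ cancel and $\widetilde{V}_n+2=(z_{n+1}b_{n+1}+z_{n-1}b_n)/(2z_n)$, whence
\[
\widetilde{V}_n+2-b_n=\frac{z_{n+1}(b_{n+1}-b_n)+b_n\,(z_{n+1}-2z_n+z_{n-1})}{2z_n}.
\]
The first term is $O(|b_{n+1}-b_n|)\in\ell^1$. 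For the second, the crucial observation is that $z_n=g(V_n)$ with $g(x)=(x(x+4))^{-1/4}$ Lipschitz on the range of $V$ has $\ell^1$ \emph{first} differences, $z_{n+1}-z_n=O(|V_{n+1}-V_n|)$, so its second difference $z_{n+1}-2z_n+z_{n-1}=(z_{n+1}-z_n)-(z_n-z_{n-1})$ lies in $\ell^1$ automatically, being the difference of an $\ell^1$ sequence and its shift --- this is exactly what eliminates the need for $n(V_{n+1}-V_n)\in\ell^1$. Hence $\widetilde{V}_n+2-b_n\in\ell^1$, and since also $b_n-(V_n+2)\in\ell^1$ (from $(b_n^2-4)/((V_n+2)^2-4)=\sqrt{V_{n-1}(V_{n-1}+4)/(V_n(V_n+4))}=1+O(|V_n-V_{n-1}|)$), adding the two gives $\widetilde{V}_n-V_n\in\ell^1$. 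Identifying $\widetilde{V}$ with $V^0$ in \eqref{compar}, this is the assertion, and it makes Theorem~\ref{Banconv} applicable.

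The step I expect to be the real obstacle is finding this $b_n$: the instinct is to copy Theorem~\ref{Snestimates bdd} with $b_n=V_n+2$, which fails under the weaker hypothesis, and the fix --- prescribe $z_n$ so that only $\Delta^2z_n$ enters the error, then use that a sequence of $\ell^1$ variation has $\ell^1$ second differences for free --- only becomes transparent once one has the cancellation identity above in hand. Everything past that is routine Lipschitz bookkeeping of the kind already used for Theorem~\ref{Snestimates bdd}.
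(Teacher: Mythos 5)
Your proposal is correct and takes essentially the same route as the paper: parts (a) and (b) are the same mean-value/telescoping estimates, and your choice $b_n^2-4=\sqrt{V_{n-1}(V_{n-1}+4)\,V_n(V_n+4)}$ with $z_n=(V_n(V_n+4))^{-1/4}$ is precisely the paper's ``natural choice'' $J_n=\sqrt{V_n(V_n+4)}$ (i.e.\ $J_n^2+4=(V_n+2)^2$, $b_{n+1}^2-4=J_{n+1}J_n$). Your bookkeeping of the residual through the second difference $z_{n+1}-2z_n+z_{n-1}$ is only a mild reorganization of the paper's splitting of $\widetilde{V}_n-V_n$ into the terms (I) and (II) in \eqref{wtV4}, resting on the same cancellation of the $\pm\tfrac12$ (equivalently $\pm\tfrac1{2z_n^2}$) terms.
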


\begin{proof}[Proof of Theorem \ref{Snestimates2}]
The proof for (a) is a direct application of Taylor's Theorem. Following \eqref{Sfla}, we consider the function $f(x) = 1/2(x + \sqrt{x^2 - 4} )$, which is differentiable for all $x^2 >4$. In particular, if $x,y>2$, $f(y) = f(x) + (y-x) R(x,y)$, where $R(x,y)=  r/2 \sqrt{r^2 - 4} $ for some $r$ between $x,y$, implying that $R(x,y)$ is uniformly bounded in $x,y$ if $x, y >2$. Since $S_{n+1}=f(b_{n+1})$ we can write
\be
S_{n+1} = S_n + (b_{n+1}-b_n) R(b_n, b_{n+1}) = S_n + O(|b_{n+1} - b_n|) ,
\ee which proves (a).

Note that \eqref{wtV1} can be expressed as
\begin{multline}
\widetilde{V}_n  - V_n = \underbrace{
\ds \frac{1}{2} 
 \left[ \ds \frac{b_n}{\sqrt{b_n^2 - 4} } - \ds \frac{b_{n+1}}{\sqrt{b_{n+1}^2 - 4}}  \right] \ds \frac{(b_n^2 - 4) (b_{n-2}^2 - 4) \cdots} {(b_{n-1}^2 - 4) (b_{n-3}^2 - 4) \cdots} }_{(I)}
\\
+
 \underbrace{\left[  \frac {b_{n+1}} {\sqrt{b_{n+1}^2 - 4} } \right] \ds \frac{(b_n^2 - 4) (b_{n-2}^2 - 4) \cdots} {(b_{n-1}^2 - 4) (b_{n-3}^2 - 4) \cdots} - (V_n + 2)}_{(II)}.
\label{wtV4} 
\end{multline} 

Clearly, (I) is summable in $n$ because both $S_n$ and $b_n$ are of bounded variation. Hence, we are left with
\be
\textrm{(II)} = 
 \ds \frac{b_n}{\sqrt{b_n^2 - 4}}\ds \frac{(b_n^2 - 4) (b_{n-2}^2 - 4) \cdots} {(b_{n-1}^2 - 4) (b_{n-3}^2 - 4) \cdots} - (V_n + 2).
\ee

To see what possible choices of $b_n$ that will give us the desired convergence, we let 
\be
J_n = \ds \frac{(b_n^2 - 4) (b_{n-2}^2 - 4) \cdots} {(b_{n-1}^2 - 4) (b_{n-3}^2 - 4) \cdots} .
\label{Jndef}
\ee Then we have
\be
b^2_{n+1} - 4 = J_{n+1} J_{n}
\label{bnjn}
\ee which implies that
\be
\textrm{(II)} = J_n \sqrt{\ds \frac{J_{n+1} J_n + 4}{J_{n+1} J_{n}}} - (V_n+2) = \sqrt{1 + \ds \frac{J_n - J_{n-1}}{J_{n+1}}} \sqrt{(J_{n+1} - J_n)J_n +  J_n^2 + 4} - (V_n+2) .
\label{Jneq1}
\ee

Therefore, a natural choice for  $J_n$ is
\be
J_{n}^2 + 4 = (V_n+2)^2
\label{Jneq2} 
\ee (or equivalently, $J_n = \sqrt{V_n(V_n +4)}$), then $J_{n+1} - J_n = O(|V_{n+1} - V_n|)$ and by \eqref{bnjn}, $b_{n+1} - b_n$ is also $O(|V_{n+1} - V_n|)$ . Under this particular choice of $J_n$,
\be
\sqrt{1 + \ds \frac{J_n - J_{n-1}}{J_{n+1}}} \sqrt{(J_{n+1} - J_n)J_n +  J_n^2 + 4} = (1+ O(V_{n+1} - V_n)) (V_n+2).
\ee 

In fact, there are a number of choices of $J_n$ that we can choose from. By \eqref{wtV4} above, $b_{n+1} - b_n \in \ell^1$ and $J_{n+1} - J_n \in \ell^1$ are are sufficient conditions for $\widetilde{V}_n - V_n \in \ell^1$.

We provide a few examples here for the interested reader:
\item{(a)} (geometric mean) Let $J_{n}^2 + 4 = (V_{n+1}+2) (V_n +2)$. Clearly, under this choice, $J_n^2 + 4 = (V_n + 2)^2 + O(|V_{n+1} - V_n|)$ and by \eqref{bnjn}, $b_{n+1} - b_n = O(|J_{n+1} - J_{n-1}|) =O(|V_{n+1} - V_n| + |V_n -V_{n-1}|) \in \ell^1$ . \\
\item{(b)} (arithmetic mean) Let $J_n^2 + 4 = [(V_n +2)^2 + (V_{n-1} + 2)^2]/2 $.
\\
\item{(c)} (skipping some $V_n$'s) For $k \geq 0$, let $J_{2k}^2+4 = J_{2k+1}^2+4 = (V_{2k}+2)^2$. Then for all $n$, $J_{n+1} -J_n = O(|V_{n+1} - V_{n-1}|) \in \ell^1$. Hence, $b_{k+1} - b_k=O(V_{k+1} - V_{k-2}) \in \ell^1$.

\end{proof}

We now turn to the case where $V_n$ is not bounded.  Here we find that
\emph{the Liouville-Green approximation for the unbounded case simply requires replacing} $V_n+ 2$ 
\emph{by the geometric mean of} $V_n+ 2$ \emph{and} $V_{n-1}+ 2$.  
That is, the canonical choice in the unbounded case is
$$
S_n - \frac{1}{S_n}= \sqrt{(V_n + 2)(V_{n-1} + 2)},
$$
which, we remark, is equivalent to $V_n + 2$ when $V_n$ is bounded and slowly varying.
The argument establishing the accuracy of the Liouville-Green approximation
runs much as in the simpler, more restricted case, but with 
correspondingly more complicated details. As before, this choice of $S_n$ is 
convenient but not unique.

In terms of a general $b_n$ and the $S_n$ related to it according to \eqref{Sdefn}, 
the comparison potential \eqref{wtV} becomes
\be
\widetilde{V}_n  - V_n = 
 \left[ \ds \frac{S_{n+1}}{\sqrt{b_{n+1}^2 - 4}} + \ds \frac{1}{S_n \sqrt{b_n^2 - 4} } \right] \ds \frac{(b_n^2 - 4) (b_{n-2}^2 - 4) \cdots} {(b_{n-1}^2 - 4) (b_{n-3}^2 - 4) \cdots} - (V_n + 2) .
\label{wtV1}
\ee

Note that by \eqref{Sfla} and the fact that $1/S_n = b_n - S_n$,
\begin{eqnarray}
\ds \frac{S_{n+1}}{\sqrt{b_{n+1}^2 - 4}} & = & \ds \frac{b_{n+1}}{2 \sqrt{b_{n+1}^2 - 4}} + \ds \frac 1 2 \\
 \ds \frac{1}{S_n \sqrt{b_n^2 - 4} } & = & \ds \frac{b_n - S_n}{\sqrt{b_n^2 - 4} } = \ds \frac{b_n}{2 \sqrt{b_n^2 - 4}} - \ds \frac{1}{2}.
\end{eqnarray} Hence, \eqref{wtV1} becomes
\be
\widetilde{V}_n - V_n =
 \ds \frac{1}{2}  \left[ \ds \frac{b_{n+1}}{\sqrt{b_{n+1}^2 - 4}} + \ds \frac{b_n}{ \sqrt{b_n^2 - 4} } \right] \ds \frac{(b_n^2 - 4) (b_{n-2}^2 - 4) \cdots} {(b_{n-1}^2 - 4) (b_{n-3}^2 - 4) \cdots} - (V_n + 2) .
\label{wtV2}
\ee

It turns out that the 
convenient choice of $C_z$ in a situation where $V_n$ is unbounded is simply $C_z = 1$.
The theorem reads as follows:

\begin{theorem}\label{Snestimates1}(unbounded potential) Let $V$ be a potential that satisfies
\be
\ds \sum_{n}\frac{1}{V_n^{1/2}} \left( \ds \frac{1}{V_{n+1}^{3/2}  } + \ds \frac{1}{ V_{n-1}^{3/2} }\right) < \infty.
\label{summation}
\ee Let $C_z=1$ and $b_n = S_n + 1/S_n > 0 $ be chosen such that
\be
S_n - \ds \frac 1 {S_n} = \sqrt{b^2_n - 4} = \sqrt{(V_n +2)(V_{n-1}+2)}.
\ee Then 
\item{(a)} $b_n = \sqrt{(V_n+2)(V_{n-1} + 2) + 4}$, $z_n = \frac{1}{\sqrt{V_n+2}}$ and
\be
S_n = \ds \frac{\sqrt{(V_n+2)(V_{n-1}+2)} + \sqrt{4+(V_n+2)(V_{n-1}+2) }}{2} > 1.
\ee
\item{(b)} $\widetilde{V}_n - V_n = O\left( \ds \frac{1}{V_{n+1}^{3/2} V_n^{1/2} }+ \ds \frac{1}{V_n^{1/2} V_{n-1}^{3/2}} \right) \in \ell^1$.
\item{(c)} The Green matrix $G_{m,n}$ is uniformly bounded.
\end{theorem}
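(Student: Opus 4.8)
The plan is to prove (a) by elementary algebra, substitute the resulting closed forms into a direct computation of the comparison potential for (b), and read (c) off the product representation \eqref{Greenfnfla}--\eqref{DaHaZ} of the Green matrix. \emph{Part (a).} Since $b_n = S_n + 1/S_n$ while the imposed choice makes $S_n - 1/S_n = \sqrt{(V_n+2)(V_{n-1}+2)}$, subtracting squares gives $b_n^2 - 4 = (S_n+1/S_n)^2 - 4 S_n\cdot\tfrac1{S_n} = (S_n - 1/S_n)^2 = (V_n+2)(V_{n-1}+2)$, and $b_n>0$ picks out $b_n = \sqrt{(V_n+2)(V_{n-1}+2)+4}$. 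Formula \eqref{Sfla} then produces the stated closed form for $S_n$, and $S_n>1$ because $S_n$ and $1/S_n$ are the two roots of $t^2 - b_n t + 1$, which are positive (their sum $b_n\ge 2$) with product $1$, so the larger exceeds $1$. For $z_n$ I would substitute $b_k^2-4 = (V_k+2)(V_{k-1}+2)$ into \eqref{zdefn}: the numerator and denominator products telescope against each other, leaving $z_n^2 = (V_n+2)^{-1}$, the base of the telescope contributing only an overall constant that $C_z=1$ normalizes away. Equivalently and more directly, $z_n := (V_n+2)^{-1/2}$ visibly satisfies the Wronski constraint \eqref{zstuff}, since $z_nz_{n+1}(S_{n+1}-1/S_{n+1}) = [(V_n+2)(V_{n+1}+2)]^{-1/2}\sqrt{(V_{n+1}+2)(V_n+2)} = 1$, and \eqref{zstuff} fixes $z_n$ up to that constant.

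\emph{Part (b).} I would start from \eqref{wtV} for $\widetilde{V}_n$ and use \eqref{zstuff} to replace the ratios $z_{n\pm1}/z_n$ --- this is exactly the reduction already performed to reach \eqref{wtV2} --- and then insert the formulas from (a). With the shorthand $A := V_n+2$ and $B_\pm := V_{n\pm1}+2$, the contributions of size $A$ cancel and one is left with
\[
\widetilde{V}_n - V_n \;=\; \tfrac12\Bigl(\sqrt{A^2 + \tfrac{4A}{B_+}} - A\Bigr) + \tfrac12\Bigl(\sqrt{A^2 + \tfrac{4A}{B_-}} - A\Bigr).
\]
The decisive elementary identity is $\sqrt{A^2+\varepsilon} - A = \varepsilon/\bigl(\sqrt{A^2+\varepsilon}+A\bigr)$; applied with $\varepsilon = 4A/B_\pm$, with the denominator bounded below by $2A$, and then expanded once further to exploit that $\varepsilon/A^2 = 4/(AB_\pm)$ is small, it gives the decay rate recorded in (b). Because $V_n>0$ forces $B_\pm\ge 2$, and because $\phi_n^+\phi_n^- = z_n^2 = (V_n+2)^{-1}$ is bounded, the quantity $\beta_n(1+|\phi_n^+\phi_n^-|^2)$ driving the contraction in Theorem~\ref{Banconv} is controlled by a constant times the summand in \eqref{summation}; hence $\widetilde{V}_n - V_n \in \ell^1$, and, identifying $\widetilde{V}$ with $V^0$, equations \eqref{one*} and \eqref{compar} are asymptotically equivalent in the sense of Theorem~\ref{Banconv}.

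\emph{Part (c).} By \eqref{Greenfnfla}, for $n\ge m$ the Green matrix of $-\Delta+\widetilde{V}$ is $G_{nm} = z_n z_m \prod_{\ell=m+1}^n S_\ell^{-1}$; each $S_\ell>1$ by (a), so the product is at most $1$, while $z_k = (V_k+2)^{-1/2}\le 2^{-1/2}$. Thus $|G_{nm}|\le \tfrac12$ for all $m,n$ (in particular $G_{nn}=z_n^2=(V_n+2)^{-1}$), which is (c). The real work is concentrated in the algebraic collapse at the start of (b): one must verify that after the substitutions for $S_n$, $S_{n+1}$, $1/S_n$, and $z_{n\pm1}/z_n$, the terms of size $V_n+2$ exactly cancel, leaving only the two small square-root differences; everything else --- the telescoping in (a), the square-root estimate in (b), and the product bound in (c) --- is routine once that cancellation is in hand.
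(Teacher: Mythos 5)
Your parts (a) and (c) are fine and essentially coincide with the paper's argument: (a) is the telescoping of \eqref{zdefn} once $b_k^2-4=(V_k+2)(V_{k-1}+2)$ is substituted (equivalently, your direct verification of \eqref{zstuff} with $z_n=(V_n+2)^{-1/2}$), and (c) is read off \eqref{Greenfnfla} and \eqref{DaHaZ} exactly as in the paper. The problem is in (b), at precisely the step you yourself single out as ``the real work.'' Your reduction to
\begin{equation*}
\widetilde V_n - V_n \;=\; \tfrac12\Bigl(\sqrt{A^2+\tfrac{4A}{B_+}}-A\Bigr)+\tfrac12\Bigl(\sqrt{A^2+\tfrac{4A}{B_-}}-A\Bigr),
\qquad A=V_n+2,\ B_\pm=V_{n\pm1}+2,
\end{equation*}
is correct (it is the same as \eqref{wtV3} after substitution), but the identity $\sqrt{A^2+\varepsilon}-A=\varepsilon/(\sqrt{A^2+\varepsilon}+A)$ with $\varepsilon=4A/B_\pm$ and denominator at least $2A$ gives each bracket at most $2/B_\pm$, and no further expansion improves this: the same identity gives the matching lower bound, since $\sqrt{A^2+\varepsilon}+A\le 2A+2\sqrt{A/B_\pm}$, so each half-term is at least roughly $1/B_\pm$ once $AB_\pm\ge1$. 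Thus what your computation actually proves is $\widetilde V_n-V_n\asymp (V_{n+1}+2)^{-1}+(V_{n-1}+2)^{-1}$, not the rate $O\bigl(V_{n+1}^{-3/2}V_n^{-1/2}+V_n^{-1/2}V_{n-1}^{-3/2}\bigr)$ asserted in (b); the sentence claiming the expansion ``gives the decay rate recorded in (b)'' is exactly where the argument fails, and with it the claimed control of $\beta_n$ by the summand of \eqref{summation} and the appeal to Theorem \ref{Banconv}.

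Moreover, this is not an omission you could repair within the same approach: your own exact formula shows the stated rate cannot hold in this generality. For the paper's Example \ref{fluct ex}, at the large (odd) sites one has $B_\pm=3$ and the formula gives $\widetilde V_n-V_n\to 2/3$, so $\widetilde V_n-V_n\notin\ell^1$ even though \eqref{summation} holds; the same happens for monotone $V_n=n^a$ with $\tfrac12<a\le1$. What one can honestly conclude from this computation is $\widetilde V_n-V_n=O\bigl(1/V_{n+1}+1/V_{n-1}\bigr)$, summable under the stronger hypothesis $\sum_n 1/V_n<\infty$. For what it is worth, the paper's own proof stumbles at the very same spot: in its evaluation of $b_{n+1}/\sqrt{b_{n+1}^2-4}-1$ the denominator $b_{n+1}\sqrt{b_{n+1}^2-4}+b_{n+1}^2-4$, which is of size $(V_{n+1}+2)(V_n+2)$, is transcribed as though it were of size $\bigl((V_{n+1}+2)(V_n+2)\bigr)^{3/2}$ (a square root is dropped), and that slip is the source of the exponent $3/2$ in (b). So your instinct to flag the cancellation and the square-root estimate as the crux was right, but the estimate as you stated it does not deliver the theorem, and the theorem's rate itself needs either a stronger summability assumption on $1/V_n$ or a different choice of $b_n$.
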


\begin{remark} The condition \eqref{summation} is satisfied by unbounded potentials, including 
some that fluctuate.  An example is given \S \ref{example}.
\end{remark}

\begin{proof}[Proof of Theorem \eqref{Snestimates1}] Given this choice of $b^2_n-4$, we have
\be
\ds \frac{(b_n^2 - 4) (b_{n-2}^2 - 4) \cdots} {(b_{n-1}^2 - 4) (b_{n-3}^2 - 4) \cdots} = V_n + 2.
\ee By the definition of $z_n$ in \eqref{zdefn}, this implies (a).

Therefore, by \eqref{wtV2} above,
\be
\widetilde{V}_n - V_n = \ds \frac{V_n+2}{2} \left[ \ds \frac{b_{n+1}}{\sqrt{b_{n+1}^2 - 4}} + \ds \frac{b_n}{ \sqrt{b_n^2 - 4} }  - 2 \right] .
\label{wtV3}
\ee

We apply the relation $\sqrt{a} - \sqrt{b} = (a-b)/(\sqrt{a} + \sqrt{b})$ to
\begin{multline}
\ds \frac{b_{n+1}}{\sqrt{b_{n+1}^2 - 4}} - 1 = \ds \frac{b_{n+1} - \sqrt{b_{n+1}^2 - 4}}{\sqrt{b_{n+1}^2 - 4}} = \ds \frac{b_{n+1}^2 - (b_{n+1}^2 - 4)}{\sqrt{b_{n+1}^2 - 4} \left(b_{n+1} + \sqrt{b_{n+1}^2 - 4} \right)} = \ds \frac{4}{b_{n+1} \sqrt{b_{n+1}^2 - 4} + b_{n+1}^2 - 4 } \\
= \ds \frac{4}{\sqrt{(V_{n+1} + 2)(V_n+2)} \left[(V_{n+1} + 2) (V_n + 2) + 4  \right] + \sqrt{(V_{n+1} + 2)(V_n+2)}  }, 
\end{multline} which is in the order of $O( (V_{n+1}+2)^{3/2} (V_n + 2)^{3/2}$).

Next, we obtain a similar formula for $b_n/\sqrt{b_n^2 - 4}$ and show that it is in the order of $O( (V_n + 2)^{3/2} (V_{n-1}+2)^{3/2} )$. Canceling the $V_n + 2$ term in \eqref{wtV3}, we prove (b).

Statement (c) about the Green matrix and the products of 
comparison solutions then follows from \eqref{Greenfnfla} 
and \eqref{DaHaZ}.
\end{proof}

\section{The diagonal of the Green matrix and 
discrete Agmon distance}\label{discrete DavHar}

We next show that the approximations derived in \S \ref{WKB} for the
solutions and Green matrix of \eqref{one*} by constructing a comparison 
\eqref{compar} are related to exact identities for Green matrices.
In particular, we offer a discrete version of the discovery of Davies and Harrell in
\cite{DaHa}, \S 4, that the diagonal elements $G_{nn}$ of the Green matrix allow 
the full solution space and full Green matrix $G_{mn}$ to be recovered
formulaically.  We build on significant earlier
steps in this direction 
by Chernyavskaya and Shuster \cite{ChSh00,ChSh01}.  As in \cite{DaHa}
we furthermore point out connections between the diagonal of the Green
matrix and an Agmon distance for \eqref{one*}.

First observe that with respect to the
diagonal of the Green function,
there are critical differences between the continuous
Schr\"odinger equations explored in \cite{DaHa} and the discrete equations
considered here.  Consider that, due to Remark \ref{4 vs 0}, if 
\be
G_{mn} = \psi_{\min(m,n)}^+ \psi_{\max(m,n)}^-
\ee
is the Green matrix for some potential function $V_n$, then 
the same diagonal elements $G_{nn}$ also belong to the Green
matrix for an equation of type \eqref{one*} but with potential function
$\widetilde{V}_n = -4 - V_n$.  Thus the uniqueness of the representation 
of \cite{DaHa} is lost, at least to this extent.

We have seen in \S \ref{WKB}
that for the comparison equation solved by the pair of functions
\eqref{prodansatz}, 
the factor $z_n$ equals
the square root of $G_{nn}$.  Meanwhile, if $z_n$
is given, then $S_n$ is determined via \eqref{zstuff}, and 
consequently \eqref{prodansatz} provides a basis for the solution space
of the comparison \eqref{compar}, and \eqref{Greenfnfla} reproduces a full Green
matrix for \eqref{compar}.  
Here we demonstrate that these implications do not rely on
the construction of a comparison equation, but hold in generality
for \eqref{one*}.

Hence let $G_{mn}$ be the Green matrix for any equation
of the form \eqref{one*}, 
and simply define $z_n := \sqrt{G_{nn}}$.  (If $G_{nn}$ is negative, 
a canonical choice of phase could be assigned to $z_n$, but here 
we primarily 
consider the case where $G_{nn} \ge 0$.)
We then use
\eqref{zstuff} to define $S_n$ for $M+1\le n\le N$ {\it viz.}, choosing 
the root analogously to
\eqref{Sfla},
\begin{equation}\label{Sfromz}
S_n^{[z]}:=\frac{1+\sqrt{1+ 4 z_n^2 z_{n-1}^2}}{2z_nz_{n-1}}.
\end{equation}
Here we caution that this choice of the root of the quadratic equation for 
$S_n^{[z]}$ will restrict the possible values of $V_n$ in what follows.
A pair of functions $\varphi^{\pm}_n$ can now be defined by 
the ansatz \eqref{prodansatz},
i.e., when expressed in terms of $z_n$
\begin{equation}\label{phifromz}
\varphi^{\pm}_n:=z_n\prod^n_{k=m+1}\left(\frac{1+\sqrt{1+ 4 z_n^2 z_{n-1}^2}}{2z_nz_{n-1}}\right)^{\pm 1}.
\end{equation}

\noindent
Remarkably, with this definition,
both $\varphi^+$ and $\varphi^-$ solve an equation
of the form \eqref{one*}, where the potential function $V_n$
is determined from $z_n$ via
\begin{align}\label{Vfromz}
V^{[z]}_n&:=\frac{\Delta \varphi^+_n}{\varphi^+_n}=\frac{
1+\sqrt{1+4 z_n^2 z_{n+1}^2}}{2z^2_n}+
\frac{2z^2_{n-1}}
{1+\sqrt{1+4 z_n^2 z_{n-1}^2}} - 2 \nonumber\\
&= \frac{z_{n+1}}{z_n} S_{n+1}^{[z]}+\frac{z_{n-1}}{z_n S_n^{[z]}} - 2,
\end{align}
provided that $V_n > -2$.  (Else a different root must be chosen in \eqref{Sfromz}.)
To see that $\varphi_n^\pm$ solve the same discrete Schr\"odinger equation, 
let us separately calculate
\begin{equation}\label{phiminusstuff}
\frac{\Delta\varphi^-_n}{\varphi^-_n}=\frac{z_{n+1}}{z_n S_{n+1}^{[z]}}+
\frac{z_{n-1}}{z_n}S_n^{[z]} - 2,
\end{equation}
and note that since $S_n^{[z]}$ has been chosen to satisfy 
the equivalent of \eqref{zstuff}, 
the difference between these last 
two expressions is
$$\frac1{z^2_n} - \frac1{z^2_n}=0.$$
This leads to a theorem in the spirit of \cite{DaHa}.

\begin{theorem}\label{DaHa Thm}
Suppose that \eqref{one*} has two independent positive 
solutions for $m\le n\le N$, with $N\ge M+2$, and denote the 
associated Green matrix $G_{mn}$.
Since $G_{nn} > 0$ for $m\le n\le N$, we may
define $z_n := \sqrt{G_{nn}}$.
In terms of $z_n$, define
$S_n^{[z]}$
and
$\varphi^{\pm}_n$
according to
\eqref{Sfromz} and \eqref{phifromz}.
Then
\begin{enumerate}
\item $\varphi^{\pm}_n$ is an independent pair of solutions of \eqref{one*}
for $m<n\le N$.
\item 
$G_{nm}=z_nz_m\prod^n_{\ell=m+1}\frac1{S_\ell^{[z]}}$, $M<m<n\le N$.
\item
The potential function is determined from $G_{nn}$ by a nonlinear difference
equation,
\begin{equation}\label{GtoV}
\frac{1}{2} \left(\sqrt{1 + 4 G_{n\,n}G_{n+1\,n+1}} + \sqrt{1 + 4 G_{n\,n}G_{n-1\,n-1}}\right) 
= (2 + V_n) G_{n\,n}.
\end{equation}
\end{enumerate}
\end{theorem}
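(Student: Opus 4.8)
The plan is to prove the theorem by identifying the pair $\varphi^{\pm}$ manufactured from $z_n=\sqrt{G_{nn}}$ through \eqref{Sfromz}--\eqref{phifromz} as nothing but constant multiples of the two given positive solutions of \eqref{one*}; statements (1)--(3) then follow with essentially only bookkeeping. Throughout, let $\psi^{\pm}$ denote the two independent positive solutions, normalized (by scaling one of them, and swapping labels if necessary) so that $W[\psi^-,\psi^+]=1$, so that $G_{mn}=\psi^+_{\min(m,n)}\psi^-_{\max(m,n)}$ on the relevant index range, as in Corollary~\ref{boundedcorollary}; in particular $z_n^2=G_{nn}=\psi^+_n\psi^-_n>0$, and I abbreviate $g_n:=G_{n-1,n}=\psi^+_{n-1}\psi^-_n>0$.

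The engine of the argument is a single algebraic identity that collapses the square root in \eqref{Sfromz}. Since $z_n^2z_{n-1}^2=(\psi^+_n\psi^-_n)(\psi^+_{n-1}\psi^-_{n-1})=(\psi^+_n\psi^-_{n-1})\,g_n$ and the Wronski relation at index $n-1$ reads $\psi^+_n\psi^-_{n-1}=1+\psi^+_{n-1}\psi^-_n=1+g_n$, we get $z_n^2z_{n-1}^2=g_n(1+g_n)$, so that
\be
1+4z_n^2z_{n-1}^2=(1+2g_n)^2 .
\ee
As everything in sight is positive, $\sqrt{1+4z_n^2z_{n-1}^2}=1+2g_n$, and hence \eqref{Sfromz} reduces to $S_n^{[z]}=(1+g_n)/(z_nz_{n-1})$, with reciprocal $1/S_n^{[z]}=z_nz_{n-1}/(1+g_n)$. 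I expect this positivity point to be the only genuinely delicate part of the proof: it is exactly the hypothesis of \emph{positive} solutions (equivalently $G_{nn}>0$, $g_n>0$) that selects the intended branch in \eqref{Sfromz} and keeps all radicands nonnegative; for indefinite diagonal one would need to track phases, which is the ``$V_n>-2$'' proviso recorded after \eqref{Vfromz}.

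Granting this, (1) is quick. From the ansatz $\varphi^{\pm}_n=z_n\prod_{k=m+1}^{n}(S_k^{[z]})^{\pm1}$ in \eqref{phifromz} we get $\varphi^+_n/\varphi^+_{n-1}=(z_n/z_{n-1})S_n^{[z]}=(1+g_n)/z_{n-1}^2=(1+g_n)/(\psi^+_{n-1}\psi^-_{n-1})$, which coincides with $\psi^+_n/\psi^+_{n-1}$ by the same Wronski relation; likewise $\varphi^-_n/\varphi^-_{n-1}=(z_n/z_{n-1})/S_n^{[z]}=z_n^2/(1+g_n)=\psi^-_n/\psi^-_{n-1}$. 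Thus $\varphi^{\pm}_n/\psi^{\pm}_n$ is constant over the index range, i.e.\ $\varphi^{\pm}_n=c^{\pm}\psi^{\pm}_n$ with $c^{\pm}=z_m/\psi^{\pm}_m\neq0$ (and, as a byproduct, $c^+c^-=z_m^2/(\psi^+_m\psi^-_m)=1$); since $\psi^{\pm}$ are independent solutions of \eqref{one*}, so are $\varphi^{\pm}$, proving (1). Statement (2) then drops out of \eqref{phifromz}, which gives $z_n\prod_{\ell=m+1}^{n}(S^{[z]}_\ell)^{-1}=\varphi^-_n$, so that for $M<m<n\le N$,
\be
z_nz_m\prod_{\ell=m+1}^{n}\frac{1}{S^{[z]}_\ell}=z_m\varphi^-_n=\frac{z_m^2}{\psi^-_m}\,\psi^-_n=\psi^+_m\psi^-_n=G_{nm} .
\ee

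Finally, for (3): $\varphi^+=c^+\psi^+$ is a nowhere-vanishing solution of \eqref{one*}, so $V_n=\Delta\varphi^+_n/\varphi^+_n$, which by the computation behind \eqref{Vfromz} equals $V^{[z]}_n$. Rationalizing the middle summand of \eqref{Vfromz} (multiply top and bottom by $\sqrt{1+4z_n^2z_{n-1}^2}-1$) rewrites it as $(\sqrt{1+4z_n^2z_{n-1}^2}-1)/(2z_n^2)$, whence $(V_n+2)z_n^2=\tfrac12\bigl(\sqrt{1+4z_n^2z_{n+1}^2}+\sqrt{1+4z_n^2z_{n-1}^2}\bigr)$; substituting $z_n^2=G_{nn}$ and $z_{n\pm1}^2=G_{n\pm1\,n\pm1}$ yields \eqref{GtoV}. (The same identity can be read off directly by multiplying the recurrence $\psi^+_{n+1}+\psi^+_{n-1}=(2+V_n)\psi^+_n$ by $\psi^-_n$, using the Wronski relation at index $n$ to replace $\psi^+_{n+1}\psi^-_n$ by $1+\psi^+_n\psi^-_{n+1}$, and recognizing $1+2\psi^+_n\psi^-_{n+1}$ and $1+2\psi^+_{n-1}\psi^-_n$ as the two radicals in \eqref{GtoV} via the perfect-square identity above.)
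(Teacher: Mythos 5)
Your proof is correct, and it rests on the same algebraic engine as the paper's: the Wronskian (normalized to $1$) turns the radicand in \eqref{Sfromz} into a perfect square, which in the paper appears as $4z_n^2z_{n\pm1}^2=(\psi_n^+\psi_{n\pm1}^-+\psi_n^-\psi_{n\pm1}^+)^2-1$ and in your write-up as $1+4z_n^2z_{n-1}^2=(1+2g_n)^2$ with $g_n=\psi^+_{n-1}\psi^-_n$. The organization, however, is reversed. The paper first observes that $\varphi^\pm$ of \eqref{phifromz} solve a common equation with the auxiliary potential $V^{[z]}_n$ of \eqref{Vfromz} and have Wronskian $1$, and then makes the crux of the proof the identity $V^{[z]}_n=V_n$, obtained by inserting $z_n^2=\psi_n^+\psi_n^-$ into the rationalized form \eqref{Vfromz alt} and using the recurrence; statements (1)--(3) all fall out of that single computation. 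You instead collapse the radical at the level of $S_n^{[z]}$, match one-step ratios to conclude $\varphi^\pm_n=c^\pm\psi^\pm_n$ with $c^+c^-=1$, which yields (1) and (2) immediately without ever comparing two potentials, and then recover (3) by rationalizing \eqref{Vfromz} (your parenthetical derivation of \eqref{GtoV} from the recurrence multiplied by $\psi^-_n$ is essentially the paper's computation run backwards). A small merit of your version is that it isolates explicitly where positivity of the solutions is used, namely in selecting the branch $\sqrt{1+4z_n^2z_{n-1}^2}=1+2g_n$, a point the paper leaves implicit when it takes the positive square root; both arguments need this, and both need the same normalization $W[\psi^-,\psi^+]=1$ by scaling, which you state cleanly.
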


\begin{remark}
The assumption that there are two positive solutions is related to the notion of
{\it disconjugacy} in the theory of ordinary differential equations, cf. \cite{Hart,Aga}. 
If, for example, $V_n > 0$ for $n \ge N_0$, then it is 
not difficult to show that no solution can change sign more than once, and that therefore the positivity assumption is satisfied for $n$ sufficiently large.  
As will be seen in the proof, a necessary 
condition for the assumption is that  $V_n > -2$.

Per Remark \ref{4 vs 0} 
the positivity assumption can be replaced by the assumption 
that there are two solutions $\psi_n^\pm$ such that $(-1)^n \psi_n^\pm > 0$.
A sufficient condition for this is that $V_n < -4$ and a necessary condition is
that $V_n < -2$.
\end{remark}

\begin{proof} 
The essential calculation was provided 
in the discussion before the statement
of the theorem.  Given that the Wronskian of $\varphi^-$ and $\varphi^+$ is 1,
these two functions are linearly independent and therefore a basis for the 
solution space of
$$(-\Delta + V^{[z]}_n)\varphi=0.$$
\noindent
Moreover,
$$G_{mn}=\varphi^+_{\min(m,n)}\varphi^-_{\max(m,n)}$$
is a Green function for $-\Delta + V^{[z]}_n$.
The crux of the proof is to show that $V^{[z]}_n$ is the same as the original $V_n$ of
\eqref{one*}.

Because $S_n^{[z]}$ was defined such that
\begin{equation*}
S_{n}^{[z]}-\frac1{S_{n}^{[z]}}=\frac{1}{z_nz_{n-1}}.
\end{equation*}
we may rewrite \eqref{Vfromz} as
\begin{align}\label{Vfromz alt}
2+ V^{[z]}_n&=
\frac{1}{2z^2_n}\left(
\sqrt{1+4 z_n^2 z_{n+1}^2}+
\sqrt{1+4 z_n^2 z_{n-1}^2}\right)
\end{align}
From the definition of $z_n$ and the assumptions of the
theorem, we know that for some independent set of 
positive solutions 
$\psi_n^{\pm}$ of \eqref{one*}, with Wronskian 1, $z_n^2 = \psi_n^+ \psi_n^-$.
Therefore
\begin{align}
4 z_n^2 z_{n \pm 1}^2 &= 4 (\psi_n^+ \psi_{n \pm 1}^-)(\psi_n^- \psi_{n \pm 1}^+)
\nonumber\\
&= (\psi_n^+ \psi_{n \pm 1}^- + \psi_n^- \psi_{n \pm 1}^+)^2 - (\psi_n^+ \psi_{n \pm 1}^- - \psi_n^- \psi_{n \pm 1}^+)^2
\nonumber\\
&=(\psi_n^+ \psi_{n \pm 1}^- + \psi_n^- \psi_{n \pm 1}^+)^2 - 1.\nonumber
\end{align}
Hence \eqref{Vfromz alt} yields
\begin{align}
2+ V^{[z]}_n &= \frac{1}{2 \psi_n^+ \psi_n^-} \left(\psi_n^+ \psi_{n+1}^- + \psi_n^- \psi_{n+1}^+ + \psi_n^+ \psi_{n-1}^- + \psi_n^- \psi_{n-1}^+ \right)\nonumber\\
&= \frac{1}{2 \psi_n^+ \psi_n^-} \left(\psi_n^+ V_n \psi_{n}^- + \psi_n^- V_n \psi_{n}^+ \right)\nonumber\\
&= 2 + V_n,\nonumber
\end{align} 
as claimed, and establishes \eqref{GtoV}, according to \eqref{Vfromz alt}.
\end{proof}

Formula \eqref{phifromz} suggests that $S_n$ can be related to
an Agmon distance \cite{Agm,HiSi}, that is, a metric $d_A(m,n)$
on the positive integer lattice such that every $\ell^2$ solution 
$\phi^-$ of \eqref{one*}
satisfies a bound of the form
$$e^{d_A(0,n)} \phi_n^- \in \ell^\infty,$$ 
and that as a consequence $\phi_n^-$ 
decays rapidly as $n \to \infty$.  Thus if
$z_n$ is bounded we expect
an Agmon distance to be something like
$\sum_{\ell=m+1}^n{\ln S_{\ell}^{[z]}}$. 
(We write the Agmon distance in this way because a metric on the integer lattice 
must be in the form of a sum, as the triangle 
inequality is an equality.) 
In Agmon's theory, however, the distance function should be a quantity that
can be calculated directly from the potential alone, and
indeed, the estimates 
in \S \ref{WKB} already imply some bounds of this form.  
As we shall now see, understanding the diagonal of the Green matrix
allows the derivation
of Agmonish bounds without the
need to control expressions involving $V_{n+1} - V_n$, as in \S \ref{WKB}.  We begin
by showing that 
$G_{nn}$ is comparable to $(V_n + 2)^{-1}$ in a precise sense.

\begin{lemma}\label{G comp 1/V}
Suppose that $\liminf_{n \to \infty}{V_n} > C > 0$ and let 
$G_{m n}$ be any Green matrix for \eqref{one*}.  
Define
$$
K_A := \sqrt{1+\left(\frac{2}{C(C+2)}\right)^2} +  \frac{2}{C(C+2)}.
$$
Then for $n$ sufficiently large,
\begin{equation}\label{Glowerupper}
\frac{1}{V_n + 2} \le G_{nn} \le \frac{K_A}{V_n + 2}.
\end{equation}
Consequently,
\begin{equation}\label{Slowerupper}
\frac{\sqrt{(V_n+2)(V_{n-1}+2)}+ \sqrt{4 +(V_n+2)(V_{n-1}+2)}}{2 K_A}
\le S_n^{[z]} \le 
\frac{\sqrt{(V_n+2)(V_{n-1}+2)}+ \sqrt{4 +(V_n+2)(V_{n-1}+2)}}{2}.
\end{equation}
\end{lemma}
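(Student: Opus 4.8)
The plan is to read everything off the identities of Theorem~\ref{DaHa Thm} together with the disconjugacy of \eqref{one*} in the regime at hand. Fix $N_0$ with $V_n>C$ for $n\ge N_0$; on $[N_0,\infty)$ no solution of \eqref{one*} changes sign more than once, so there is a positive decreasing subdominant solution $\psi^-_n\to 0$, and the given Green matrix is $G_{nn}=\psi^+_n\psi^-_n$, where $\psi^+$ is the independent solution fixed by the boundary condition, normalized so that $W[\psi^-,\psi^+]=1$ (cf.\ Corollary~\ref{boundedcorollary}); since $G_{nn}>0$ and $\psi^->0$, the solution $\psi^+$ is eventually positive, and being dominant it is eventually increasing. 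Writing $m^+_n:=\psi^-_{n+1}/\psi^-_n\in(0,1)$ and $p_n:=\psi^+_{n-1}/\psi^+_n\in(0,1)$ for large $n$, the recurrence $\psi_{n+1}=(2+V_n)\psi_n-\psi_{n-1}$ (applied to $\psi^+$ and to $\psi^-$) and the Wronskian relation yield the Riccati identities $m^+_n=\bigl[(2+V_{n+1})-m^+_{n+1}\bigr]^{-1}$ and $p_{n+1}=\bigl[(2+V_n)-p_n\bigr]^{-1}$ and, dividing $W=1$ by $G_{nn}=\psi^+_n\psi^-_n$,
$$\frac{1}{G_{nn}}=(2+V_n)-p_n-m^+_n .$$

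From this the lower bound in \eqref{Glowerupper} is immediate — indeed it already follows from \eqref{GtoV} since $\sqrt{1+4G_{nn}G_{n\pm1\,n\pm1}}\ge 1$ — while $0<p_n+m^+_n<2$ also gives $V_n<1/G_{nn}<2+V_n$, so that $w_n:=(V_n+2)G_{nn}\in(1,\,1+2/C)$ for $n$ large and hence $W:=\limsup_n w_n$ is finite. To upgrade this to $w_n\le K_A$ I will run a limsup/maximum-principle argument on the second-order relation \eqref{GtoV}. Given $\varepsilon>0$, pick $N$ with $w_m<W+\varepsilon$ for all $m\ge N$ and an index $n^*\ge \max(N+1,N_0+1)$ with $w_{n^*}>W-\varepsilon$; since $4G_{n^*n^*}G_{n^*\pm1\,n^*\pm1}=4w_{n^*}w_{n^*\pm1}\bigl[(V_{n^*}+2)(V_{n^*\pm1}+2)\bigr]^{-1}<4(W+\varepsilon)^2/(C+2)^2$, equation \eqref{GtoV} at $n^*$ gives $(W-\varepsilon)^2<1+4(W+\varepsilon)^2/(C+2)^2$. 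Letting $\varepsilon\downarrow 0$ and using $(C+2)^2>4$ yields $W^2\le (C+2)^2/\bigl[(C+2)^2-4\bigr]$, i.e.\ $W\le (C+2)/\sqrt{(C+2)^2-4}$.

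It remains to check the elementary inequality $(C+2)/\sqrt{(C+2)^2-4}\le K_A$, which then forces $w_n\le K_A$ for all large $n$ and completes \eqref{Glowerupper}. Setting $v:=C+2>2$, the left side equals $s+\sqrt{1+s^2}$ with $s:=\tfrac{2}{v\sqrt{v^2-4}}$, while $K_A=t+\sqrt{1+t^2}$ with $t=\tfrac{2}{C(C+2)}=\tfrac{2}{v(v-2)}$; since $\sqrt{v^2-4}=\sqrt{(v-2)(v+2)}>v-2$ we have $s<t$, and $x\mapsto x+\sqrt{1+x^2}$ is increasing, so the inequality holds (strictly). Finally, \eqref{Slowerupper} is a purely algebraic consequence of \eqref{Glowerupper}: with $\rho:=\sqrt{G_{nn}G_{n-1\,n-1}}$ formula \eqref{Sfromz} reads $S^{[z]}_n=\tfrac{1}{2\rho}+\sqrt{1+\tfrac{1}{4\rho^2}}$, a decreasing function of $\rho$, and applying \eqref{Glowerupper} at $n$ and $n-1$ confines $\rho$ to $\bigl[\tfrac1P,\tfrac{K_A}{P}\bigr]$ with $P:=\sqrt{(V_n+2)(V_{n-1}+2)}$; substituting these two endpoints and using $K_A\ge 1$ to pull $1/K_A$ out of the square root produces exactly the two displayed bounds on $S^{[z]}_n$.

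I expect the main obstacle to be the bookkeeping around ``any Green matrix'': one must make sure the Riccati identity and the bounds $p_n,m^+_n\in(0,1)$ hold for the diagonal of \emph{whatever} Green matrix is given, which is where the disconjugacy of \eqref{one*} for $V_n>C>0$ (no solution changing sign more than once, existence of an eventually-monotone positive solution basis) does the work — together with the slightly delicate final step of reconciling the sharp constant $(C+2)/\sqrt{(C+2)^2-4}$ coming out of the limsup argument with the stated $K_A$.
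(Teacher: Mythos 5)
Your argument is correct, and for the crucial upper bound in \eqref{Glowerupper} it takes a genuinely different route from the paper. The paper gets $G_{nn}<1/C$ by an operator-theoretic step — the Green matrix is the kernel of $(-\Delta+V)^{-1}$ for a self-adjoint realization on a half-line, $-\Delta>0$ gives $\inf\mathrm{sp}(-\Delta+V)>C$, so $G_{nn}=\langle e_n,(-\Delta+V)^{-1}e_n\rangle<C^{-1}$ — and then feeds this into \eqref{GtoV}, replacing only the off-diagonal terms $G_{n\pm1\,n\pm1}$ by $1/C$ and solving the resulting quadratic, which produces $K_A$ directly. You instead stay entirely at the level of solutions: the Wronskian/Riccati identity $1/G_{nn}=(2+V_n)-\psi^+_{n-1}/\psi^+_n-\psi^-_{n+1}/\psi^-_n$ gives the crude a priori bound $w_n:=(V_n+2)G_{nn}<1+2/C$, and then a limsup (maximum-principle) argument applied to the exact identity \eqref{GtoV} yields $\limsup w_n\le (C+2)/\sqrt{(C+2)^2-4}$, which you correctly verify is strictly below $K_A$ (your computation that $(C+2)/\sqrt{(C+2)^2-4}=s+\sqrt{1+s^2}$ with $s=2/[(C+2)\sqrt{C(C+4)}]<2/[C(C+2)]$ checks out). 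What your approach buys is a sharper, asymptotically optimal intermediate constant (it is attained in the constant-potential case, where $w_n=(V+2)/\sqrt{V(V+4)}$) and independence from the spectral/resolvent estimate; what the paper's approach buys is brevity, a bound valid wherever the spectral estimate applies rather than only in eventual form, and the constant $K_A$ without a reconciliation step. Both treatments lean on the same bookkeeping you flag at the end: \eqref{GtoV} requires a positive independent solution pair at large $n$, which the disconjugacy remark following Theorem \ref{DaHa Thm} (for $V_n>C>0$) supplies, and with the normalization $W[\psi^-,\psi^+]=1$ and $\psi^->0$ the monotone ratio $\psi^+_n/\psi^-_n$ forces $\psi^+_n>0$ eventually, so $G_{nn}>0$ is not circular. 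Your final deduction of \eqref{Slowerupper} from \eqref{Sfromz} by monotonicity in $\rho=\sqrt{G_{nn}G_{n-1\,n-1}}$ and the inequality $K_A\ge1$ is the same "insert and collect terms" step as in the paper.
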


\begin{remark} Note that
the upper bound is of the same form as was found for the Liouville-Green
approximation in (4.36). For a simpler bound $K_A$ could be replaced in these inequalities by
$$
\sqrt{1 + \frac{4}{C^2}} > K_A
$$
(see proof).
\end{remark}

\begin{proof}
The lower bound on $G_{nn}$ is immediate from Statement (3) of
Theorem \ref{DaHa Thm}.
\medskip

\noindent
The upper bound in \eqref{Glowerupper}
requires a spectral estimate.  The Green 
matrix $G_{mn}$ is the kernel of the resolvent operator of a self-adjoint realization of
$- \Delta + V$ on $\ell^2([N, \infty))$ for some $N$, 
where the boundary condition at $n=N,N+1$ is that satisfied by $\varphi_n^+$.  
Since $- \Delta > 0$ on this space (as an operator),
$\inf {\rm sp}(- \Delta + V) > C$, and hence, by the spectral mapping theorem,
$\|(- \Delta + V)^{-1}\|_{\rm op} < C^{-1}$.  Since 
$G_{nn} = \left\langle{e_n, (- \Delta + V)^{-1} e_n }\right\rangle$, 
where $\{e_n\}$ designate the standard unit vectors in $\ell^2$, it follows that
$G_{nn} < C^{-1}$.  Inserting this into
\eqref{Vfromz} would already imply \eqref{Glowerupper} with
$K_A$ replaced by $\sqrt{1+4/C^2}$.  To improve the constant, replace only 
the terms $G_{n\pm1\,n\pm1}$ in \eqref{Vfromz} by $1/C$, getting
\begin{equation}\label{quadrat}
(2 + V_n) \le \frac{\sqrt{1 + \frac{4G_{nn}}{C}}}{G_{nn}}.
\end{equation}
Since
$$
\frac{\sqrt{1 + x y}}{x}
$$
is a decreasing function of $x$ when $x,y > 0$, an upper bound on
$G_{nn}$ is the larger root of \eqref{quadrat} (which is effectively a quadratic).
The claimed upper bound with the constant $K_A$ results 
by keeping one factor $V_n+2$ in the solution of the quadratic, replacing 
the others by $C+2$.

The bounds on $S_n^{[z]}$ result from inserting the bounds on $G_{nn}$ into
\eqref{Sfromz} and collecting terms.
\end{proof}

We can now state some Agmonish bounds.
\begin{cor}\label{Agmon}
Suppose that $\liminf_{n \to \infty}{V_n} > C > 0$ and fix a positive integer $m$.  Then 
the subdominant solution $\varphi^-$ of \eqref{one*} satisfies
\item{(a)}
$$
\left(\prod_{\ell=m}^n{\frac{V_\ell+2}{K_A}}\right)\varphi_n^-  \in \ell^\infty.
$$
\item{(b)}  If, in addition, ${V_{n+1}-V_n} \in \ell^1$, then
$$
\left(\prod_{\ell=m}^n{\frac{V_\ell+2 + \sqrt{V_\ell(V_\ell + 4)}}{2}}\right) \varphi_n^-  \in \ell^\infty.
$$
\end{cor}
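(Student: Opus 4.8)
The plan is to prove (a) unconditionally from the exact Green--matrix identities of Theorem \ref{DaHa Thm} together with the two--sided bounds of Lemma \ref{G comp 1/V}, and to prove (b) by producing, via Theorem \ref{Snestimates2}, a Liouville--Green comparison equation that is $\ell^1$--close to \eqref{one*} and whose product factors $S_k$ differ from $\tfrac12\bigl(V_k+2+\sqrt{V_k(V_k+4)}\bigr)$ by only an $\ell^1$ amount, and then invoking the asymptotic--equivalence estimate of Theorem \ref{Banconv}. Throughout, ``the subdominant solution'' $\varphi^-$ is well defined up to a scalar because $V_n\ge C$ for large $n$ makes $-\Delta+V$ bounded below at infinity; since the assertions concern membership in $\ell^\infty$, the scalar is irrelevant and we may take $\varphi^->0$. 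Note also that $V_n>0$ for large $n$, so by disconjugacy (cf.\ the Remark after Theorem \ref{DaHa Thm}) \eqref{one*} has two independent positive solutions on $[m,\infty)$ for $m$ large, and any Green matrix $G$ for the self--adjoint realization on $\ell^2([m,\infty))$ has $G_{nn}>0$.

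\emph{Proof of (a).} Set $z_n:=\sqrt{G_{nn}}$ and define $S_n^{[z]},\varphi^\pm_n$ by \eqref{Sfromz}--\eqref{phifromz}. By Theorem \ref{DaHa Thm}, $\varphi^-_n=z_n\prod_{k=m+1}^n (S_k^{[z]})^{-1}=G_{nm}/z_m$; since $n\mapsto G_{nm}$ is a column of a resolvent, it lies in $\ell^2$, so $\varphi^-$ is a scalar multiple of the subdominant solution. Lemma \ref{G comp 1/V} gives $z_n\le\sqrt{K_A/(V_n+2)}$, so $z_n\sqrt{V_n+2}$ is bounded, and from \eqref{Slowerupper}, discarding the $\sqrt{4+\cdots}$ summand, $S_n^{[z]}\ge \tfrac1{K_A}\sqrt{(V_n+2)(V_{n-1}+2)}$. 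Writing $w_\ell:=V_\ell+2$ and telescoping the product of square roots,
\[
\prod_{k=m+1}^n S_k^{[z]}\;\ge\;\frac{1}{K_A^{\,n-m}}\,\frac{\prod_{\ell=m}^n w_\ell}{\sqrt{w_m w_n}}\;=\;\frac{K_A}{\sqrt{w_m w_n}}\,\prod_{\ell=m}^n\frac{w_\ell}{K_A},
\]
hence $\varphi^-_n\le (z_n\sqrt{w_n})\,\tfrac{\sqrt{w_m}}{K_A}\prod_{\ell=m}^n\tfrac{K_A}{w_\ell}\le \mathrm{const}\cdot\prod_{\ell=m}^n\tfrac{K_A}{V_\ell+2}$ for all large $n$. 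The finitely many remaining indices contribute only a finite set of positive values, so (a) follows.

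\emph{Proof of (b).} Since $V_{n+1}-V_n\in\ell^1$, $V_n$ is bounded and tends to $V_\infty\ge C>0$. Apply Theorem \ref{Snestimates2} with the canonical choice $J_n=\sqrt{V_n(V_n+4)}$ (i.e.\ $J_n^2+4=(V_n+2)^2$): part (c) yields a comparison potential $\widetilde V$ with $\widetilde V_n-V_n\in\ell^1$, while $b_n^2=4+J_nJ_{n-1}$ forces $b_n-(V_n+2)=O(|V_n-V_{n-1}|)\in\ell^1$, so --- the larger--root map $b\mapsto\tfrac12(b+\sqrt{b^2-4})$ being Lipschitz for $b$ bounded away from $2$ --- $S_n-S_n^{bdd}\in\ell^1$, where $S_n^{bdd}=\tfrac12\bigl(V_n+2+\sqrt{V_n(V_n+4)}\bigr)$ is exactly the factor in (b). The Liouville--Green basis $\phi^\pm_n=z_n\prod_k^n S_k^{\pm1}$ for $(-\Delta+\widetilde V)\phi=0$ has $z_n$ bounded above and below and $\phi^+_n\phi^-_n=z_n^2$ bounded, so in Theorem \ref{Banconv} the hypothesis $\beta_n(1+|\phi^+_n\phi^-_n|^2)\in\ell^1$ reduces to $\beta_n=(V_n-\widetilde V_n)/W\in\ell^1$; moreover $\phi^-_n\to0$ exponentially (as $S_k\to S_\infty^{bdd}>1$) and $|\phi^+_n|$ is eventually increasing. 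Theorem \ref{Banconv} then produces the subdominant solution $\psi^-$ of \eqref{one*} with $\psi^-_n=\phi^-_n+r_n\widehat\psi^-_n$, $r_n\to0$; since $S_k>1$, both $|\phi^-_n|$ and $\widehat\psi^-_n=\max_{m\ge n}|\phi^-_m|$ are $\le\mathrm{const}\cdot\prod_k^n S_k^{-1}$, so $|\psi^-_n|\le\mathrm{const}\cdot\prod_k^n S_k^{-1}$. Finally, $S_k-S_k^{bdd}\in\ell^1$ with both bounded away from $0$ and $\infty$ gives $\sum_k|\log S_k-\log S_k^{bdd}|<\infty$, whence $\prod_k^n S_k^{-1}\le\mathrm{const}\cdot\prod_k^n (S_k^{bdd})^{-1}$. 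Restoring the starting index $m$ (a constant factor) and recalling $\varphi^-$ is a scalar multiple of $\psi^-$ yields (b).

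\emph{Main obstacle.} The delicate point is the bookkeeping in (b): one would like to compare \eqref{one*} directly with the bounded Liouville--Green equation of Theorem \ref{Snestimates bdd}, whose factor $S_n$ is \emph{literally} $S_n^{bdd}$, but that theorem needs the stronger hypothesis $n(V_{n+1}-V_n)\in\ell^1$ (which is exactly what is required to put $z_n-z_\infty$ in $\ell^1$). The remedy above is to exploit the freedom in choosing $b_n$ in Theorem \ref{Snestimates2}: with only $V_{n+1}-V_n\in\ell^1$ one can still arrange $\widetilde V_n-V_n\in\ell^1$, and the price --- that $S_n$ is no longer literally $S_n^{bdd}$ --- is harmless because the difference is $\ell^1$, hence a bounded multiplicative perturbation of the product. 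An alternative that avoids asymptotic equivalence entirely is to linearize the nonlinear recursion \eqref{GtoV} about $G_{nn}=1/\sqrt{V_n(V_n+4)}$ and solve the resulting second--order recursion for the error (with $\ell^1$ forcing and a strictly sub--$\tfrac12$ off--diagonal weight, so a summable Green kernel) to obtain $G_{nn}\sqrt{V_n(V_n+4)}-1\in\ell^1$, after which the part--(a) telescoping run directly on $\varphi^-_n=z_n\prod(S_k^{[z]})^{-1}$ gives (b).
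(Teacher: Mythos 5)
Your proof is correct and follows essentially the paper's own route: part (a) is exactly the argument given there (the representation \eqref{phifromz}, boundedness of $z_n$ via Lemma \ref{G comp 1/V}, and the lower bound \eqref{Slowerupper} with the $4$ dropped so that the product of $\sqrt{(V_k+2)(V_{k-1}+2)}$ telescopes), and part (b) is the paper's "Liouville--Green plus bounded $z_n$" argument with the asymptotic-equivalence step (Theorem \ref{Banconv}) made explicit. Your extra care in (b) --- routing through Theorem \ref{Snestimates2} with $J_n=\sqrt{V_n(V_n+4)}$ because Theorem \ref{Snestimates bdd} literally requires $n(V_{n+1}-V_n)\in\ell^1$, and then passing from $\prod S_k^{-1}$ to $\prod (S_k^{bdd})^{-1}$ via $\sum_k|\log S_k-\log S_k^{bdd}|<\infty$ --- correctly fills in a detail that the paper's two-line proof of (b) (which simply recalls \eqref{Sfla}) glosses over.
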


\begin{proof}
Recall the representation
\eqref{phifromz}.  Because $z_n$ is bounded, so is
$$
\left(\prod_{\ell}^n{S_\ell^{[z]}}\right) \varphi_n^-.
$$
We then use the lower bound on $S_\ell^{[z]}$ from the lemma, but 
simplify by dropping the 4, which allows the product to telescope in a pleasing way,
producing (a).

For (b) we note that the additional assumption on $V_n$ allows us to conclude that 
$\varphi$ is well-approximated by the Liouville-Green expression in \S \ref{WKB}.  Since 
$z_n$ is again bounded, so is
$$
\left(\prod_{\ell}^n{S_\ell}\right) \varphi_n^-,
$$
using the ansatz \eqref{prodansatz}.  Finally, we recall \eqref{Sfla}.
\end{proof}

\noindent
Thus when
$\liminf_{n \to \infty}{V_n} > 0$,
a suitable Agmon distance $d_A(m,n)$ for \eqref{one*}
is given by 
$$
\sum_{\ell=m+1}^n{\left(\ln(V_l +2) - \ln{K_A}\right)},
$$
or by 
$$
\sum_{\ell=m+1}^n{\ln{\frac{V_\ell+2 + \sqrt{V_\ell(V_\ell + 4)}}{2}}},
$$
provided that $\frac{V_{n+1}-V_n}{V_n} \in \ell^1$.  The latter can be weakened to
the simpler expression
$$
\sum_{\ell=m+1}^n{\ln{(V_\ell+1)}}.
$$

\section{Some illustrative examples}\label{example}
In Theorem \ref{dichothm1}, we consider the problem when $\sup_n |\Sigma_n| < \infty$. Here we construct a potential $V$ such that the boundedness condition of $\Sigma_n$ is satisfied but $\Sigma_n$ fluctuates as $n \to \infty$:
\begin{example}[bounded but fluctuating $\Sigma_n$] Let $V^0_n \equiv V$ such that $V \not \in [-4, 0]$. Then we may find a non-zero $x \in (-1,1)$ such that
\be
x + \ds \frac 1 x = (2+V).
\label{xdef}
\ee The solutions to $-\Delta + V^0 = 0$ are given by
\be
\phim_n = x^n \text{ and } \phip_n = x^{-n} .
\ee and the Wronskian $W$ is $x^{-1} - x$. Consider an asymptotically constant potential:
\be V^{\alpha}_n= V + (-1)^n W x^{2 n} .
\ee In other words, $\beta_n = \beta_n \phip_n \phim_n = (-1)^n x^{2 n}$ is summable and $\beta_n (\phip_n)^2 = (-1)^{n}$. Therefore, $0 < \sup_n |\prod_{j=1}^{n} (1\pm \beta_j \phip_j \phim_j)| <\infty$. For $n \geq 1$,
\be
\Sigma_n =  (-1)^{n+1} \ds \prod_{j=1}^{n-1} (1+ (-1)^j x^{2j}) + (1-(-1)^{n} x^{2n}) \Sigma_{n-1}
\label{f1}
\ee with
\be
\Sigma_1 =1,\quad \Sigma_2 = x^2 - x^4,\quad \Sigma_3 = 1 - x^6 + x^8 - x^{10}.
\ee
Using \eqref{f1}, it is easy to prove that for $k \in \mathbb{N}$,
\be
\ds \lim_{k \to \infty} \Sigma_{2k} = 0 \text{ and } \ds \lim_{k \to \infty} \Sigma_{2k+1} = 1.
\ee
\label{constantpotentialexample}
\end{example}

\begin{example} The main situation we have treated 
is where $V_n \to V_\infty \notin [-4,0]$, with
$V_n - V_\infty \in \ell^2$, for which the solutions are of exponential type, with a 
subdominant solution.  As a second case, let us suppose that
$V_n \to \infty$ 

Eigenfunctions that decay only polynomially are possible when $V_\infty = 0$ or $-4$.  Suppose, for example, that 
$\phi_n^- = n^{- \alpha}$ for some $\alpha > 0$.  This is a solution to a discrete 
Schr\"odinger equation with a potential satisfying
\begin{equation}\label{thrsh1}
V_k  = \frac{(\Delta \phi^-)_k}{\phi_k} = -2 + \left(\frac{k}{k+1}\right)^\alpha + \left(\frac{k}{k-1}\right)^\alpha.
\end{equation}
Using a Taylor expansion, we find that
\begin{equation}\label{thrsh2}
V_k  = \frac{(\Delta \phi^-)_k}{\phi_k} = \frac{\alpha (\alpha+1)}{k^2} + {0}(k^{-4}). 
\end{equation}
Thus polynomial decay can be anticipated when the potential decreases like 
$\gamma k^{-2}$.

\begin{cor}\label{polyexample}
Suppose that for some $\gamma > 0$,
$$
V_k = \frac{\gamma}{k^2} + W_k,
$$
where $k W_k \in \ell^1$.  Then equation \eqref{one*}
has a subdominant solution $\psi_k^-$ such that
\begin{equation}
\lim_{k \to \infty}{k^{\frac 1 2 \left(1 + \sqrt{1 + 4 \gamma}\right)} \psi_k^-} = 1.
\end{equation}
For any solution $\psi_k$ that is linearly independent of $\psi_k^-$,
\begin{equation}
k^{\frac 1 2 \left(1 - \sqrt{1 + 4 \gamma}\right)} \psi_k
\end{equation}
converges to a finite, nonzero value.
\end{cor}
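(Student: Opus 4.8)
The plan is to realize \eqref{one*} as a small perturbation of the explicitly solvable comparison equation singled out in the discussion around \eqref{thrsh1}--\eqref{thrsh2}, and then to apply a variant of the variation-of-constants scheme of \S\ref{VoC}. Let $\alpha$ be the positive root of $\alpha(\alpha+1)=\gamma$, i.e.\ $\alpha=\tfrac12(\sqrt{1+4\gamma}-1)$, set $\phi_n^-:=n^{-\alpha}$, and define $V_n^0:=(\Delta\phi^-)_n/\phi_n^-$. By \eqref{thrsh2}, $V_n^0=\gamma/n^2+O(n^{-4})$, so $\phi_n^-$ is a subdominant solution of $(-\Delta+V^0)\phi=0$, and reduction of order (\eqref{secondsol}) furnishes an independent solution $\phi_n^+$ growing like $n^{\alpha+1}/(2\alpha+1)$; in particular $\phi_n^+\phi_n^-$ grows linearly in $n$. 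Normalize the Wronskian to $1$.

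Since $V_n-V_n^0=W_n+O(n^{-4})$, the hypothesis $kW_k\in\ell^1$ gives $\sum_k k\,|V_k-V_k^0|<\infty$, hence $\sum_k|(V_k-V_k^0)\phi_k^+\phi_k^-|<\infty$ and $\sum_k|V_k-V_k^0|<\infty$. Writing $\psi_n=a_n^+\phi_n^++a_n^-\phi_n^-$ and substituting into \eqref{e9}/\eqref{two*}, one seeks $(a_n^\pm)$ with $a_n^+\to0$ and $a_n^-\to1$. The catch is that Theorem~\ref{Banconv} is stated with the weight $(\phi_n^+)^2\sim n^{2\alpha+2}$, for which the required $\ell^1$-bound on $\beta_n(1+|\phi_n^+\phi_n^-|^2)$ would amount to $k^2W_k\in\ell^1$, which is stronger than the present hypothesis. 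I would therefore rerun the fixed-point argument of Theorem~\ref{Banconv} with the lighter weight $n^{2\alpha+1}$ on $a^+$ (and weight $1$ on $a^-$): the estimates are the same, except that the terms which fail to telescope are handled by the Kronecker/Abel inequality $\sum_{k\ge n}|W_k|\le n^{-1}\sum_{k\ge n}k\,|W_k|\to0$ --- which is exactly what the $n$-weighted hypothesis supplies. This makes $\mathcal{M}$ a contraction on the modified space for $N$ large, so there is a unique $(a_n^\pm)$ with $n^{2\alpha+1}a_n^+$ and $a_n^-$ bounded solving $\mathbf{a}=\binom01-\mathcal{M}\mathbf{a}$; because every summand of $(\mathcal{M}\mathbf{a})^\pm_n$ is a tail of a convergent series, in fact $n^{2\alpha+1}a_n^+\to0$ and $a_n^-$ converges to a limit that is nonzero once $N$ is large, which we rescale to $1$.

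It then follows that $\psi_n^-:=a_n^+\phi_n^++a_n^-\phi_n^-$ obeys $a_n^+\phi_n^+/\phi_n^-=a_n^+n^{2\alpha+1}(1+o(1))\to0$ and $a_n^-\to1$, so $\psi_n^-/\phi_n^-\to1$, giving the decay rate $n^{\alpha}\psi_n^-\to1$ with $\alpha=\tfrac12(\sqrt{1+4\gamma}-1)$. For a solution independent of $\psi_n^-$ no further perturbation work is needed: apply \eqref{secondsol} to $\psi^-$ itself, exactly as in the proof of Corollary~\ref{polydecay}, obtaining $\psi_n^+=\psi_n^-\sum_k^{n-1}(\psi_k^-\psi_{k+1}^-)^{-1}\sim n^{\alpha+1}/(2\alpha+1)$; hence every such solution grows like $n^{\alpha+1}$ and has product $\psi_n^-\psi_n^+\sim n/(2\alpha+1)$, consistent with Corollary~\ref{polydecay}. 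The main obstacle is precisely the summability bookkeeping in the middle step: the linear growth of $\phi_n^+\phi_n^-$ naively demands $n^2$-weighted control of $V-V^0$, and getting by with only the $n$-weighted hypothesis $kW_k\in\ell^1$ forces both the choice of the lighter weight $n^{2\alpha+1}$ and the systematic use of the Kronecker estimate; re-examining the products $\Pi_n^{\pm}$ (cf.\ \eqref{Pipmndef}) and the convergence of $a_n^-$ under this weaker input is where care is required.
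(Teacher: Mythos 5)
Your argument is sound, and it follows the route the paper itself intends (the corollary is stated without a written proof; the implicit ingredients are the comparison potential of \eqref{thrsh1}--\eqref{thrsh2} with $\phi_n^-=n^{-\alpha}$, $\alpha(\alpha+1)=\gamma$, the reduction-of-order companion $\phi_n^+\sim n^{\alpha+1}/(2\alpha+1)$ as in Corollary \ref{polydecay}, and the fixed-point scheme of \S\ref{VoC}). Your observation that Theorem \ref{Banconv} cannot be quoted verbatim is correct and is the substantive point of the proof: since $\phi_n^+\phi_n^-\sim Cn$, the hypothesis $\beta_n(1+|\phi_n^+\phi_n^-|^2)\in\ell^1$ would demand $k^2W_k\in\ell^1$, which is stronger than $kW_k\in\ell^1$. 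Replacing the weight $(\phi_n^+)^2\sim n^{2\alpha+2}$ by $n^{2\alpha+1}\sim\phi_n^+/\phi_n^-$ does reduce every term in the contraction estimate to something of size $(n+k)|\beta_{n+k}|$, so that only tails of $\sum_m m|\beta_m|$ are needed, and $\beta_m=V_m-V_m^0=W_m+O(m^{-4})$ supplies exactly that; the contraction, the limits $n^{2\alpha+1}a_n^+\to0$ and $a_n^-\to1$, and then the growth of any independent solution via \eqref{secondsol} all go through as you sketch.

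One point you must reconcile explicitly: what you prove is $k^{\frac12(\sqrt{1+4\gamma}-1)}\psi_k^-\to1$ and $k^{-\frac12(\sqrt{1+4\gamma}+1)}\psi_k\to C\neq0$, whereas the corollary as printed asserts the exponents $\frac12(1+\sqrt{1+4\gamma})$ for $\psi^-$ and $\frac12(1-\sqrt{1+4\gamma})$ for $\psi$. The printed pair cannot both hold: it would give $\psi_k^-\psi_k\sim Ck^{-1}$, so the Wronskian of the two solutions, being constant in $k$, would have to vanish, contradicting their independence; it also conflicts with the paper's own computation \eqref{thrsh2}, which identifies the decay rate of the comparison subdominant solution as $n^{-\alpha}$ with $\alpha=\frac12(\sqrt{1+4\gamma}-1)$. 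In other words, each printed exponent is off by one power of $k$ (apparently a misprint), and your exponents are the internally consistent ones. State this discrepancy rather than silently proving a claim that looks different from the one quoted.
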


\end{example}

Next, we provide an example such that $|\phip_n \phim_n|$ is not bounded, yet 
the quantity $J_n \in \ell^1$ and therefore
Theorems \ref{dichothm1} and \ref{dichothm2} apply.
\begin{example}[sparse perturbation] Consider a potential $V^0$ such that
\be
V^0_n := \begin{cases}
\ds \frac{2}{(n+1)(n-1)} & n >1 ; \\
-\frac{3}{2} & n =1.
\end{cases}
\ee 
It is easy to verify that $\phi^-_n :=1/n$ is a solution to the equation $-\Delta + V^0 = 0$ under the convention that $\phi^-_{-1} = 0$.
By Corollary \ref{polydecay}, $\phip_n$ obeys $|\phip_n \phi^-_n| \sim C n$ for some constant $C$.

Consider a potential $V$ which is a sparse perturbation of $V^0$:
\be
V_n : = \begin{cases}
V + \ds \frac{W}{n^2} & n = 2^k \text{ for some } k \in \mathbb{N} ;  
\\
V & \text{otherwise}.
\end{cases}
\ee
Under such definitions, $\beta_n \phip_n \phim_n \sim C/n$ is sparsely distributed at powers of $2$ and hence summable.
\end{example}

Finally, we provide an example for which Liouville-Green approximation is accurate, while the potential fluctuates and diverges as 
$n \to \infty$.

\begin{example}\label{fluct ex}
Let $V^a$ be defined such that
\be
V_n^{a} = \begin{cases}
n^a & \text{ if n is odd}; \\ 
1 & \text{ if n is oven},
\end{cases}  \quad a > 2.
\ee Then
\be
\ds \sum_{n}\frac{1}{V_n^{1/2}} \left( \ds \frac{1}{V_{n+1}^{3/2}  } + \ds \frac{1}{ V_{n-1}^{3/2} }\right) 
= \ds \sum_{n \text{ is odd }}  \ds \frac{2}{n^{a/2}} + \ds \sum_{n \text{ is even }} \left( \ds \frac{1}{(n+1)^{3a/2}} + \frac{1}{(n-1)^{3a/2}}\right) < \infty.
\ee
\end{example}

\section*{Appendix: Second-order difference equations and orthogonal polynomials} 

In this section, we will show how the discrete Schr\"odinger operator relates to orthogonal polynomials on the real line.   We begin by recalling some standard facts; the reader may refer to \cite{simon1, simon3} for a comprehensive introduction to the subject.

Let $\mu$ be a non-trivial measure on $\mathbb{R}$ such that for all $n \in \mathbb{N}$, the moments are finite. In other words,
\be
\ds \int_{\mathbb{R}} |x|^n d\mu(x) < \infty .
\ee

We form an inner product and a norm on $L^2(\mathbb{R}, d\mu)$ as follows: for any $f,g \in L^2(\mathbb{R}, d\mu)$, we define an inner product and a norm as follows:
\be
\langle f,g \rangle  =  \ds \int_{\mathbb{R}} \ol{f(x)} g(x) d\mu(x)  , \quad \|f\|^2  =  \ds \int_{\mathbb{R}} f(x)^2 d\mu(x) .
\ee

By the Gram--Schmidt process, we can orthogonalize $1,x, x^2, \dots$ and obtain the family of monic orthogonal polynomial on the real line with respect to the measure $\mu$, which we denote as $(P_n(x))_{n=0}^\infty$. For example, if $\mu= \ds \sqrt{2\pi}^{-1} e^{x^2/2}$, then we obtain the Hermite polynomials; and if $\mu=  \chi_{[-1,1]} dx$, then we obtain the Legendre polynomials.

Let $(p_n(x))_{n=0}^\infty$ denote the family of normalized orthogonal polynomials, i.e., $\|p_n\|^2_\mu = 1$. It is well-known that the monic and the normalized orthonormal polynomials on the real line satisfy the following recurrence relations
\begin{align}
xP_n(x) & = &P_{n+1}(x) + b_{n+1} P_n(x) + a_n^2 P_{n-1}(x) , \label{recurrence1}\\
x p_n(x) & = & a_{n+1}(x) p_{n+1}(x)+ b_{n+1} p_n(x) + a_n p_{n-1}(x) \label{recurrence2} .
\end{align}

Note that \eqref{recurrence2} above can be expressed as follows:
\be
 \bpm b_1  & a_1 & 0 & 0 & \dots \\
a_1 & b_2 & a_2 & 0 & \dots \\
0 & a_2 & b_3 & a_3 & \dots \\
\dots & \dots & \dots & \dots & \dots 
 \epm \bpm 1 \\ p_1(x) \\ p_2(x) \\ \vdots \epm = x \bpm 1 \\ p_1(x) \\ p_2(x) \\ \vdots \epm .
 \label{jacobi}
\ee The tridiagonal matrix in \eqref{jacobi} above is called the \textbf{Jacobi matrix}. The recurrence relation \eqref{recurrence2} can also be expressed in terms of the $2 \times 2$ transfer matrix $A_{n+1}(x)$ as follows
\be
\bpm p_{n+1}(x) \\ a_{n+1} p_n(x) \epm =  \underbrace{ {a_{n+1}}^{-1} \bpm x-b_{n+1} & -1 \\ a_{n+1}^2 & 0 \epm }_{A_{n+1}(x)} \bpm p_n(x) \\ a_n p_{n-1} (x)\epm \, , n\geq 0.
\label{transfer}
\ee

Observe that the discrete Schr\"odinger operator with potential $V$ and energy $E$ on $f$ can be written as
\be
- f_{n+1} - f_{n-1} + (V_n+2) f_n = E f_n .
\label{sch}
\ee Compare \eqref{sch} with \eqref{recurrence2}. Note that the discrete Schr\"odinger equation \eqref{sch} can be seen as having $a_n \equiv 1$ and $b_{n+1} = V_n +2$ and $X=E$. Hence, orthogonal polynomials associated with the measure with recurrence relations $a_n \equiv 1$ and $b_{n+1} = V_n+2$ evaluated at $x=E$ can be seen as a solution of the difference equation \eqref{sch} with initial condition $(p_0(x), a_{0} p_{-1}(x)) = (1,0)$.

The solution to \eqref{transfer} with initial condition $(0,-1)$ (i.e., $n=0$) are known as orthogonal polynomials of the second kind, $(q_n(x))_{n=0}^\infty$, where $q_n(x)$ is a polynomial of degree $n-1$. Therefore, $(p_n(x))_{n=0}^\infty$ and $(q_n(x))_{n=0}^\infty$ form a basis for the solution space of the difference equation \eqref{sch}.

However, for the Schr\"odinger equation, we impose the condition that the solution is square summable (i.e. in $\ell^2(\mathbb{N})$), a property that is not necessarily satisfied by $p_n(E)$. In fact, for any $x_0 \in \mathbb{R}$,
\be
\left( \ds \sum_{k=0}^\infty p_k(x_0)^2 \right)^{-1} = \mu(x_0) .
\ee Hence, $(p_n(E))_n$ is a solution if and only if $E$ is a pure point $\mu$. 

Recall the second-order difference equation \eqref{jeff1} studied by Geronimo--Smith \cite{GeSm} which was briefly discussed in Section \ref{intro}. Note that \eqref{jeff1} can be written in terms of a transfer matrix
\be
\bpm y(n+1) \\ y(n) \epm = d(n+1)^{-1} \bpm  q(n) & -1 \\ d(n+1) & 0  \epm \bpm y(n) \\  y(n-1) \epm
\ee  which resembles the transfer matrix $A_{n+1}(x)$ in \eqref{transfer}. Hence, techniques developed to study the asymptotic behavior of orthogonal polynomials can be applied to study ratio asymptotics of the solutions, which determines whether the limit $\lim_{n \to \infty} y(n+1)/y(n)$ exists and what the limit is in the case that it does. For \eqref{jeff1} and given that $y(n) = \prod_{j=n_0}^n u(j)$, ratio asymptotics means
\be
\ds \frac{y(n+1)}{y(n)} = \ds \frac{u(n+1)}{u(n)},
\ee which explains why it was reasonable for Geronimo--Smith to assume that $\lim_{n \to \infty} u(n+1)/u(n)$ exists should the convergence rates of $q(n)$ and $d(n)$ be sufficiently fast.

For the asymptotic analysis of $p_n(x)$ by means of the transfer matrix when the coefficients are asymptotically identical (meaning $a_n \to a$, $b_n \to b$), the reader may refer to \cite{wong4}.

\section*{Acknowledgments}
We are grateful to Jeff Geronimo, Nina
Chernyavskaya, and Dale Smith for references and helpful remarks.

The second author is supported by the National Science Foundation through NSF-DMS 0456611, M. Lacey, P.I.

\bibliographystyle{plain}

\end{document}